\newtheorem{introtheorem}{Theorem}
\newtheorem{theorem}{Theorem}[section]
\newtheorem{lemma}[theorem]{Lemma}
\newtheorem{proposition}[theorem]{Proposition}
\newtheorem{corollary}[theorem]{Corollary}
\theoremstyle{definition}
\newtheorem{definition}[theorem]{Definition}
\newtheorem{remark}[theorem]{Remark}
\newtheorem{example}[theorem]{Example}
\newtheorem*{question*}{Question}
\newtheorem*{questions*}{Questions}
\newtheorem*{steps*}{Answer/steps}
\newtheorem*{progress*}{Progress}
\newtheorem*{classification*}{Classification}
\newtheorem*{construction*}{Classification}
\newtheorem*{example*}{Example}
\newtheorem*{remark*}{Remark}
\newtheorem*{remarks*}{Remarks}
\newtheorem*{definition*}{Definition}
\renewcommand{\tilde}{\widetilde}
\newcommand{\C}{\mathbb{C}}
\newcommand{\Q}{\mathbb{Q}}
\newcommand{\R}{\mathbb{R}}
\newcommand{\Z}{\mathbb{Z}}
\newcommand{\F}{\mathbb{F}}
\newcommand{\X}{\mathcal{X}}
\DeclareMathOperator{\GL}{GL}
\DeclareMathOperator{\Aut}{Aut}
\DeclareMathOperator{\Sp}{S}
\DeclareSymbolFont{cyrletters}{OT2}{wncyr}{m}{n}
\DeclareMathSymbol{\Sha}{\mathalpha}{cyrletters}{"58}
\def\greekbolds#1{%
 \@for\next:=#1\do{%
    \def\X##1;{%
     \expandafter\def\csname V##1\endcsname{\boldsymbol{\csname##1\endcsname}}
     }
   \expandafter\X\next;
  }
}
\def\make@bb#1{\expandafter\def
  \csname bb#1\endcsname{{\mathbb{#1}}}\ignorespaces}
\def\make@bbm#1{\expandafter\def
  \csname bb#1\endcsname{{\mathbbm{#1}}}\ignorespaces}
\def\make@bf#1{\expandafter\def\csname bf#1\endcsname{{\bf
      #1}}\ignorespaces} 
\def\make@gr#1{\expandafter\def
  \csname gr#1\endcsname{{\mathfrak{#1}}}\ignorespaces}
\def\make@scr#1{\expandafter\def
  \csname scr#1\endcsname{{\mathscr{#1}}}\ignorespaces}
\def\make@cal#1{\expandafter\def\csname cal#1\endcsname{{\mathcal
      #1}}\ignorespaces} 
\def\do@Letters#1{#1A #1B #1C #1D #1E #1F #1G #1H #1I #1J #1K #1L #1M
                 #1N #1O #1P #1Q #1R #1S #1T #1U #1V #1W #1X #1Y #1Z}
\def\do@letters#1{#1a #1b #1c #1d #1e #1f #1g #1h #1i #1j #1k #1l #1m
                 #1n #1o #1p #1q #1r #1s #1t #1u #1v #1w #1x #1y #1z}
\def\ol{\overline}
\def\wt{\widetilde}
\def\ul{\underline}
\def\wh{\widehat}
\newcommand{\<}{\langle}   %\< is not defined yet.
\renewcommand{\>}{\rangle} %\> is already defined.
\newcommand{\isoto}{\stackrel{\sim}{\longrightarrow}}
\def\Fpbar{\overline{\bbF}_p}
\def\Fp{{\bbF}_p}
\def\Fq{{\bbF}_q}
\def\Qp{{\bbQ}_p}
\def\Zp{{\bbZ}_p}
\def\Qbar{\overline{\bbQ}}
\def\ch{characteristic\ }
\def\Gm{{\bbG_m}} 
\newcommand{\A}{\mathbb A}    % for adele
\def\makeop#1{\expandafter\def\csname#1\endcsname
  {\mathop{\rm #1}\nolimits}\ignorespaces}
\DeclareMathOperator{\Mass}{Mass}
\newcommand{\dieu}{Dieudonn\'{e} }
\DeclareMathSymbol{\twoheadrightarrow} {\mathrel}{AMSa}{"10}
\DeclareMathOperator{\pr}{pr}
\newcommand{\abs}[1]{\lvert #1 \rvert}
\def\sfF{\mathsf{F}}
\def\sfV{\mathsf{V}}
\def\Gr{\mathrm{Gr}}
\def\char{{\rm char\,}}
\begin{document}

\title{Supersingular Ekedahl-Oort strata and Oort's conjecture}

\author{Valentijn Karemaker}
\address{Korteweg-de Vries Institute for Mathematics, University of Amsterdam, The Netherlands}
\email{V.Z.Karemaker@uva.nl}

\author{Chia-Fu Yu}
\address{Institute of Mathematics, Academia  Sinica and National Center for Theoretic Sciences, Taipei, Taiwan}
\email{chiafu@math.sinica.edu.tw}

 \keywords{abelian varieties, automorphism groups, endomorphism algebras, Grassmannian and Lagrangian varieties, Ekedahl-Oort strata}
\subjclass{14K10 (14K15, 11G10, 51A50)}

\setcounter{tocdepth}{2}

\begin{abstract}
  Let $\calA_g$ be the moduli space over $\overline{\mathbb{F}}_p$ of $g$-dimensional principally polarised abelian varieties, where $p$ is a prime.
We show that if $g$ is even and $p\ge 5$, then every geometric generic member in the maximal supersingular Ekedahl-Oort stratum in $\calA_g$ has automorphism group $\{ \pm 1\}$. This confirms Oort's conjecture in the case of $p\ge 5$ and even $g$. We also separately prove Oort's conjecture for $g=4$ and any prime $p$.
\end{abstract}

\maketitle

\section{Introduction}

Let $g\ge 1$ be a positive integer and $p$ a prime number. 
Let $\calA_g$ be the moduli space over $\Fpbar$ of $g$-dimensional principally polarised abelian varieties, and let $\calS_g$ be the supersingular locus of $\calA_g$. 
Let $k$ be an algebraically closed field of characteristic $p$. For each member $(X,\lambda)$ in $\calA_g(k)$, it is a fundamental question to understand what the endomorphism ring $\End(X)$ of $X$ and the automorphism group $\Aut(X,\lambda)$ of $(X,\lambda)$ may be. Moreover, it is also interesting to understand how these arithmetic invariants vary in the moduli space $\calA_g$ or in a subvariety, for example, in $\calS_g$. 

Chai and Oort~\cite{COirr} showed that for any prime $\ell\neq p$, the $\ell$-adic monodromy attached to any non-supersingular central leaf $\calC \subseteq \calA_g(k)$ is surjective and that $\calC$ is irreducible. It follows that the geometric generic member $(X_{\bar \eta},\lambda_{\bar \eta})$ of $\calC$ has endomorphism ring $\Z$ and hence automorphism group $\{\pm 1\}$. Using this, every geometric generic member $(X_{\bar \eta},\lambda_{\bar \eta})$ of  either a non-supersingular Newton stratum or a non-supersingular Ekedahl-Oort (EO) stratum (i.e., an Ekedahl-Oort stratum that is not entirely contained in $\calS_g$ ) also shares the same property. One may ask what one can say for the supersingular case. Oort's conjecture \cite[Question 4]{edixhoven-moonen-oort} states that when $g\ge 2$, every geometric generic member in $\calS_g$ has automorphism group $\{\pm1 \}$, even though its endomorphism ring is always of $\Z$-rank $4g^2$.
For $g=2$ and $p>2$, Oort's conjecture has been proved by Ibukiyama~\cite{ibukiyama}, and by the first author and Pries \cite{karemaker-pries} independently, with a counterexample in  $p=2$.

In~\cite{karemaker-yobuko-yu} the present authors and Yobuko consider the mass function on the supersingular locus:
\[ \Mass:\calS_g \to \Q_{>0}, \quad x \mapsto \Mass(x)=\Mass(\calC(x)):=\sum_{(X,\lambda)\in \calC(x)} \frac{1}{\abs{\Aut(X,\lambda)}},\]
where $\calC(x)$ is the central leaf passing through the point $x$.
This function decomposes the supersingular locus into pieces of strata, called mass strata, which encode information on the jump of arithmetic invariants. 
Concrete descriptions of mass strata and explicit mass formulae on each stratum were obtained by the second author and J.D.~Yu~\cite{yuyu} for $g=2$, and by the authors of~\cite{karemaker-yobuko-yu} for $g=3$.
The latter authors also showed that on the stratum of maximal mass, each geometric point has automorphism group $\{\pm 1\}$ if $p>2$, and $\{\pm 1\}^3$ if $p=2$, confirming Oort's conjecture for $g=3$, with again a counterexample in $p=2$. 

Chai and Oort~\cite{COirr} also showed that every non-supersingular Newton  stratum is irreducible. Ekedahl and van der Geer~\cite{EvdG} showed that every non-supersingular EO stratum is also irreducible. 
For the supersingular case, it is known that the number of irreducible components is large as long as either $p$ or $g$ is large, as shown by Li and Oort \cite{lioort} for $\calS_g$, and by Harashita~\cite{harashita:SSEO} for supersingular EO strata. 
In \cite{ibukiyama-karemaker-yu} the present authors and Ibukiyama determine precisely when the supersingular locus $\calS_g$,  and  a supersingular central leaf are irreducible. For determining the irreducibility of the latter, the authors were led to explore automorphism groups that occur in supersingular EO strata in $\calA_4$.\\

The present work is a continuation of the authors' exploration into the variation of arithmetic invariants on supersingular EO strata. 
We prove the following results.

\begin{introtheorem}\label{thm:main}
If $g$ is even and $p\ge 5$, then every geometric generic member in the maximal supersingular EO stratum has automorphism group $\{\pm 1\}$.    
\end{introtheorem}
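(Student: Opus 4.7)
The plan is to combine the explicit description of the supersingular locus via polarised quaternionic flag varieties (as developed by Li--Oort and Harashita) with a stratification of the moduli space by automorphism type, showing that the generic stabiliser on the maximal stratum is $\{\pm 1\}$.

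First, I would realise the maximal supersingular EO stratum $\calT\subseteq\calS_g$ as an open dense subvariety of an irreducible component of a quaternionic Lagrangian Grassmannian. Writing $g=2m$ and fixing a supersingular elliptic curve $E/\Fpbar$, Li--Oort's theory shows that every $(X,\lambda)\in\calS_g(k)$ is isogenous to a superspecial polarised product $(E^g,\mu_0)$. Covariant Dieudonn\'e theory identifies the supersingular Dieudonn\'e lattices inside $M(E^g)\otimes\Q_p$ compatible with $\mu_0$ with points of a quaternionic Lagrangian variety $\calL$ of dimension $m^{2}=\lfloor g^{2}/4\rfloor$, and the EO type of the resulting point is read off from the relative position of the lattice with respect to Frobenius and Verschiebung. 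The maximal supersingular EO type then singles out an open dense stratum $\calT\subseteq\calL$.

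Next, I would stratify $\calT$ by automorphism type. Since each supersingular abelian variety has endomorphism algebra $M_g(B_{p,\infty})$, every $\alpha\in\Aut(X,\lambda)$ is a finite-order element of a quaternionic unitary group $U$ acting naturally on $\calL$. Writing $\calT^\alpha\subseteq\calT$ for the $\alpha$-fixed subvariety, the locus of points with non-trivial automorphism group is
\[ \calT^{\mathrm{bad}} \;=\; \bigcup_{\alpha\in U,\ \alpha\neq\pm 1}\calT^{\alpha} , \]
and the theorem reduces to proving $\dim\calT^{\alpha}<\dim\calT$ for each such $\alpha$. A point of $\calT^\alpha$ corresponds to a Dieudonn\'e lattice stabilised by $\alpha$; such lattices are parametrised by a product of smaller Grassmannians and Lagrangians attached to the centraliser $Z_U(\alpha)$ and the $\alpha$-isotypic decomposition of the ambient quaternionic Hermitian space.

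The main obstacle is a uniform dimension estimate over all nontrivial $\alpha\in U$, since $U$ grows with $g$. Rather than enumerate finite subgroups, I would aim for a structural bound: for $\alpha\neq\pm 1$, the $\alpha$-isotypic decomposition of the ambient quaternionic Hermitian module is a strict refinement, so the centraliser Lagrangian loses at least the dimension contributed by the smallest off-diagonal isotypic block, a positive quantity precisely when $\alpha\neq\pm 1$. The hypothesis $p\ge 5$ enters to guarantee that $\alpha$ has order prime to $p$ (ruling out the characteristic-$2$ pathologies of the counterexamples at $g=2,3$) and acts semisimply on $M(E^g)\otimes\Qpbar$; the evenness of $g$ ensures that the quaternionic symplectic form remains nondegenerate on each isotypic piece, so that the centraliser Lagrangian is strictly smaller-dimensional than $\calL$, giving the required inequality.
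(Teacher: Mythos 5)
Your high-level strategy — cover the maximal supersingular EO stratum by a Lagrangian variety attached to a superspecial $(E^g,\mu)$, and show that the locus stabilised by any nontrivial automorphism is of smaller dimension — is a genuinely different route from the paper's. The paper instead introduces the relative endomorphism algebra $\End(V_0,W)$ of isotropic subspaces $W$, proves (via the injectivity criterion for evaluation maps, Theorem~\ref{thm:evT}, and the explicit Proposition~\ref{prop:End=k_0_Sp}) that the stratum where $\End(V_0,W)=k_0$ is open and dense, and then reads off $\Sp(V_0^{\rm min},\ol M)=\{\pm1\}$ directly. What the paper's approach buys is a full stratification of $\calS_g^{\rm eo}$ by conjugacy classes of relative endomorphism algebras, which feeds into the mass formula (Theorem~\ref{thm:mass}) and the Hecke-transitivity argument for Theorem B; your approach, if executed carefully, would prove Theorem A alone more economically.

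However, there are gaps you would need to fill.

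First, the numerology is off. You assert that $\calL$ has dimension $m^2=\lfloor g^2/4\rfloor$. That is the dimension of the Li--Oort moduli of polarised flag type quotients, which covers \emph{all} of $\calS_g$. The Lagrangian covering the supersingular EO locus $\calS_g^{\rm eo}$ has dimension $r(r+1)/2$ with $r=\lfloor g/2\rfloor$, i.e.\ $m(m+1)/2$ when $g=2m$, which is strictly smaller than $m^2$ once $m\ge 2$. Identifying the maximal EO stratum as ``open dense in $\calL$'' with $\dim\calL=m^2$ is therefore internally inconsistent.

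Second, and more substantively, the assertion that $p\ge 5$ ``guarantees that $\alpha$ has order prime to $p$'' is both unproven and not the fact you actually need. An element of $\Aut(E^g,\mu)$ could reduce to a unipotent element of $\Sp_{2r}(\F_{p^2})$, which has order $p$; so $\Aut(E^g,\mu)$ need not have order prime to $p$ even when $p\ge 5$. The statement that does the work — and it is exactly where the hypothesis $p\ge 5$ enters — is that the kernel $1+\Pi\,\Mat_g(O_p)$ of the reduction map $m_\sfV\colon\Aut_{\rm DM}(\wt M,p\<\,,\>)\to\Sp(V_0^{\rm min})$ is torsion-free (Lemma~\ref{lm:Vp}), which requires the explicit binomial coefficient computation and fails for $p=2,3$. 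Without this lemma, you cannot rule out that some $\alpha\neq\pm1$ reduces to $\pm1$ in $\Sp(V_0^{\rm min})$ and hence acts trivially on the entire Lagrangian, making $\calT^\alpha=\calT$. Once you have Lemma~\ref{lm:Vp}, the ``structural bound'' you sketch can be replaced by the much simpler observation: if $m_\sfV(\alpha)\neq\pm1$ then $m_\sfV(\alpha)$ acts nontrivially on the irreducible projective Lagrangian, hence its fixed locus is a proper closed subset, and one takes the finite union over the finite group $\Aut(E^g,\mu)$. Your isotypic decomposition argument is not needed and in any case does not handle unipotent $\alpha$ without extra care, since those act non-semisimply.

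Third, the role you attribute to evenness of $g$ is not the one the paper uses. The paper needs $g$ even so that $\ker\wt\lambda=\wt X[F]$, hence $\wt M^t=\sfV\wt M$, which is what makes the reduction map $m_\sfV$ have the exact target $\Sp(V_0^{\rm min})$ with $V_0^{\rm min}$ of dimension $2r=g$. For odd $g\ge 3$ the obstruction is of a different nature (see Remark~\ref{rem:goddp2}): one can choose $\mu=(\mu',\lambda_E)$ that decomposes, so every isogeny $\rho:(E^g,\mu)\to(X,\lambda)$ splits off a superspecial elliptic factor, and $\Aut(X,\lambda)$ is automatically larger than $\{\pm1\}$. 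Your explanation via nondegeneracy of the form on isotypic pieces does not capture this phenomenon.
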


The assertion of Theorem~\ref{thm:main} fails if either $g$ is odd or $p=2$; see Remark~\ref{rem:goddp2}. 

\begin{introtheorem}\label{thm:oc}
    Oort's conjecture holds true for the case where $g$ is even and $p\ge 5$.
\end{introtheorem}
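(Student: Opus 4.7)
The plan is to deduce Theorem~\ref{thm:oc} from Theorem~\ref{thm:main} via a purely dimensional/stratification argument, by showing that the geometric generic point of each irreducible component of the supersingular locus $\calS_g$ already lies in the maximal supersingular EO stratum. Once this reduction is in place, Theorem~\ref{thm:main} supplies the automorphism group directly.

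First I would record the dimension count from Li--Oort~\cite{lioort}: for $g$ even the supersingular locus $\calS_g$ is equidimensional of dimension $g^2/4$. The maximal supersingular EO stratum $\calT$ is, by definition, an EO stratum contained in $\calS_g$ of maximal dimension; for $g$ even this is the unique EO stratum of dimension $g^2/4$ contained in $\calS_g$ (it corresponds to a specific elementary sequence determined by $g$). Since EO strata are locally closed in $\calA_g$ and $\calT\subseteq \calS_g$ has the same dimension as every irreducible component of $\calS_g$, the stratum $\calT$ is open in $\calS_g$.

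The key step is to verify that $\calT$ actually meets every irreducible component of $\calS_g$. Let $Z$ be such a component. Since $Z$ is covered by the traces of the EO strata on $Z$, at least one such trace is open dense in $Z$ and hence has dimension $g^2/4$. The corresponding EO stratum of $\calA_g$ is then contained in $\calS_g$ and has dimension $\ge g^2/4$, so by equality and the uniqueness of the top-dimensional supersingular EO stratum for even $g$ it must equal $\calT$. Therefore $Z\cap\calT$ is open dense in $Z$, and the geometric generic point of $Z$ coincides with the geometric generic point of some irreducible component of $\calT$. Applying Theorem~\ref{thm:main} yields that this geometric generic member has automorphism group $\{\pm 1\}$, establishing Oort's conjecture for $g$ even and $p\ge 5$.

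The bulk of the work is carried by Theorem~\ref{thm:main}; the only mild subtlety in this reduction is the uniqueness of the top-dimensional supersingular EO stratum for even $g$, which is a combinatorial statement about elementary sequences and is standard, cf.\ Harashita~\cite{harashita:SSEO}. I do not expect any obstacle beyond bookkeeping.
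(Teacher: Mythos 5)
Your reduction relies on a dimension count that is incorrect for even $g\ge 4$, and the gap cannot be repaired in the way you suggest. The maximal supersingular EO stratum $S_{\varphi_{\max}}$ has elementary sequence $\varphi_{\max}(g-i)=\lfloor g/2\rfloor -i$ for $i=0,\dots,\lfloor g/2\rfloor$, so its dimension is
\[
|\varphi_{\max}|=\sum_{i=0}^{r-1}(r-i)=\frac{r(r+1)}{2},\qquad r=\lfloor g/2\rfloor,
\]
(compare also with the Lagrangian variety $\mathrm{L}(V_0)$ of dimension $r(r+1)/2$ covering it). By Li--Oort, $\dim\calS_g=\lfloor g^2/4\rfloor = r^2$ for $g$ even. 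For $g=2$ these coincide, but already for $g=4$ one has $\dim S_{\varphi_{\max}}=3<4=\dim\calS_4$, and the gap grows like $g^2/8$ as $g\to\infty$. Hence $S_{\varphi_{\max}}$ is nowhere dense in $\calS_g$, not open, and it does not meet the generic points of the irreducible components of $\calS_g$.

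The second half of the dimension argument also breaks down independently: even if the trace $Z\cap S_\varphi$ were open dense in a component $Z$ of $\calS_g$, this would not force $S_\varphi\subseteq\calS_g$. An EO stratum can intersect $\calS_g$ in a high-dimensional locus while containing non-supersingular points, in which case $\varphi\notin\Phi^{\mathrm{ss}}$. Precisely because of this, the generic point of a component of $\calS_g$ typically lies in a non-supersingular EO stratum, and Theorem~\ref{thm:main} says nothing about it directly. What is actually needed is that the relevant open stratum $\calS^{\mathrm{eo},0}_{g,r,[\F_{p^2}]}$ of $\calS_g^{\mathrm{eo}}$ meets every irreducible component of $\calS_g$ (so that the generic point of each component is a \emph{generalisation} of a point with trivial automorphism group). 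The paper obtains this from Proposition~\ref{prop:hecke}: $\Pi_0(\calS_{g,n})$ is transitive under $\ell$-adic Hecke correspondences, which is proved by contracting $\calS_{g,n}(a=1)$ to the set of polarised superspecial abelian varieties with level structure via minimal isogenies, identifying this set with a double coset $G_{x_0}(\Q)\backslash G_{x_0}(\A_f)/G_{x_0}(\Z_p)U_n$, and invoking strong approximation for the simply connected inner form $G_{x_0}$. Since $\calS^{\mathrm{eo},0}_{g,r,[\F_{p^2}],n}$ is $\ell$-adic Hecke invariant and nonempty, transitivity spreads its nonempty intersection from one component to all components; then the automorphism group of the geometric generic point of each component is $\{\pm1\}$ because automorphism groups can only shrink under generalisation. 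This Hecke-theoretic mechanism is the essential content that your dimensional reduction misses.
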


We explain how Theorem A implies Theorem B. First, we show that the set of irreducible components of $\calS_g$ is transitive under $\ell$-adic Hecke correspondences, where $\ell\neq p$ is a prime. It follows that every irreducible component $Y$ of the supersingular locus $\calS_g$ contains an irreducible component $W$ of the maximal supersingular EO stratum. Since the polarised abelian variety corresponding to the geometric generic point of $W$ has automorphism group $\{\pm 1\}$ when $g$ is even and $p\ge 5$, the same holds for the geometric generic point of $Y$. 

The main idea of the proof of Theorem A is as follows. We consider the locus $\calS_g^{\rm eo}\subseteq \calS_g$ which is the union of all supersingular EO strata. Each irreducible component of $\calS_g^{\rm eo}$ admits a finite cover which is a Lagrangian variety ${\mathrm L}$. We introduce a stratification on ${\rm L}$ by the relative endomorphism algebra of its isotropic subspaces. 
Using this, we construct a new stratification on 
$\calS_g^{\rm eo}$. 
It describes the jumps of the endomorphism rings of abelian varieties in $\calS_g^{\rm eo}$. There is a unique maximal stratum which is open and dense in $\calS_g^{\rm eo}$.
We further show that 
when $g$ is even and $p\ge 5$, every geometric point in the maximal stratum has automorphism group~$\{\pm 1\}$. It follows that 
the polarised abelian variety corresponding to each geometric generic point of~$\calS_g^{\rm eo}$ has automorphism group $\{\pm 1\}$.\\

In a recent preprint \cite{dusan}, Du\v{s}an Dragutinovi{\'c} obtained a different proof of Oort's conjecture when $g=4$ and $p>2$, as well as a new proof when $g=3$ and $p>2$, using the moduli space of curves in both cases. While Theorem~A does not cover the cases $p=2,3$, we show in Section~\ref{sec:g4} that this restriction can be removed when $g=4$, by proving  Oort's conjecture for all $p$. This result provides strong evidence for Oort's conjecture holding true, except when $(g,p)=(2,2)$ or $(3,2)$.  \\

A main tool in this paper is the notion of the relative endomorphism algebra $\End(V,W)$ of pairs of vector spaces $(V,W)$ over a field extension $L/K$, where $V$ is a finite-dimensional vector space over $K$, $W\subseteq V\otimes_K L$ is an $L$-subspace, and 
\[\End(V,W):=\{\alpha\in \End_K(V): \alpha(W)\subseteq W\}. \]  
This notion also applies to, e.g., the endomorphism algebras of Drinfeld modules over $\C_\infty$, the function field analogue of the complex numbers, and those of complex tori. The computation of $\End(V,W)$ already appeared in Albert's works~\cite{albert:1934, albert:1935} for computing the matrix multiplication of Riemann matrices. In this paper, we exploit this invariant for studying the endomorphism rings and automorphism groups of certain polarised supersingular abelian varieties (see Section~5); through the construction of the stratification by this invariant, we further investigate how the endomorphism rings and automorphism groups may vary for polarised abelian varieties on supersingular EO strata. Besides its application to Oort's conjecture, we also show that this stratification refines the mass stratification studied in \cite{karemaker-yobuko-yu}, and we provide the mass formula for each stratum in Theorem~\ref{thm:mass}. For proving Oort's conjecture for $g=4$, we use results of Harashita~\cite{harassg4} on the moduli space of four-dimensional rigid polarised flag type quotients.   \\

The paper is organised as follows. Section 2 studies basic algebraic properties for pairs of vector spaces and their endomorphism algebras. In Section 3 we give a method for computing the relative endomorphism algebra $\End(V_0,W)$ of a pair of vector spaces $(V_0,W)$ and introduce a stratification on the Grassmanian $\Gr$ of $r$-dimensional subspaces $W$ in a vector space~$V_0$ over a field $k_0$ by the relative endomorphism algebra $\End(V_0,W)$. The main result (Theorem~\ref{thm:nonempty}) shows that on an open and dense subset of $\Gr$, the relative endomorphism algebra $\End(V_0,W)$ is equal to $k_0$, except when~$k_0$ is either an algebraically closed field, or a real closed field. Section 4 gives the parallel results of Section 3 for the Lagrangian  
varieties of $r$-dimensional isotropic subspaces of a $2r$-dimensional symplectic space. The results of this section are exploited in Section 6 to construct the aforementioned stratification on $\calS_g^{\rm eo}$.
In Section 5, we show how relative endomorphism algebras are related to the endomorphism algebras of polarised abelian varieties and how the notion of relative endomorphism algebras can be used to compute the endomorphism rings and automorphism groups of certain supersingular polarised abelian varieties. 
The proofs of Theorems A and B are given in Section 6. 
Finally, Section 7 proves Oort's conjecture in dimension $g=4$ for all primes $p$, by explicit computations for Dieudonn{\'e} modules and their automorphisms.

\subsection*{Acknowledgements}
Part of this work was carried out when the first author visited Academia Sinica, resp.~when the second author visited Utrecht University; both authors are grateful for the respective institute's hospitality. 
We thank Akio Tamagawa for helpful discussions, especially the improvement of Theorems~\ref{thm:nonempty} and~\ref{thm:L:nonempty} using results in Subsection~2.1.
The first author was partially supported by the Dutch Research Council (NWO) through grant VI.Veni.192.038.
The second author was supported by
the NSTC grants 112-2115-M-001-010, 113-2115-M-001-001 and Academia Sinica IVA grant.
This work was supported by the Research Institute for Mathematical Sciences, an International Joint Usage/Research Center located in Kyoto University. We thank the referee for his/her careful reading and helpful comments that have improved the manuscript significantly.

\section{Pairs of vector spaces and endomorphism algebras}

In this section we present algebraic preliminaries for pairs of vector spaces and their endomorphism algebras.

\subsection{Linear independence of specialisations of monomials}\

Let $L/K$ be a field extension and let $p:=\char K\ge 0$. Denote by $\ol K$ an algebraic closure  of the field $K$.

\begin{definition} \ 
   \begin{enumerate}
       \item $L/K$ is called \emph{essentially finite} if either $p=0$ and $L/K$ is finite; or $p>0$, $L/K$ is algebraic and there exist a finite extension $K'/K$ in $\ol K$ and an integer $r\in \Z_{\ge 0}$ such that $L\subseteq (K')^{p^{-r}}$. The latter condition is equivalent to the existence of an integer $r\in \Z_{\ge 0}$ such that the field extension $L^{p^r}K/K$ is finite. 
       \item $L/K$ is called \emph{strictly infinite} if it is not essentially finite. 
   \end{enumerate} 
\end{definition}

Recall that a real closed field is an ordered field $F$, so in particular $\char F=0$, such that every positive element of $F$ has a square root in $F^\times$ and every polynomial of odd degree has a root in $F$. 
A field $F$ is real closed if and only if $F\neq \ol F$ and the field extension $\ol F/F$ is finite.  

\begin{definition}\label{def:exceptional}
  We call a field extension $L/K$ {\it exceptional} if either $K=L=\ol K$, or $K$ is real closed and $L=\ol K$. Equivalently, $L/K$ is exceptional if and only if $L=\ol L$ and $L/K$ is finite.    
\end{definition}

Let $K[X_1,\dots, X_n]$ denote the polynomial ring over $K$ of $n$ variables, and $K[X_1,\dots, X_n]_{\le d}\subseteq K[X_1,\dots, X_n]$, for $d\in \Z_{\ge 0}$, be the subspace consisting of polynomials of degree $\le d$.

\begin{theorem}\label{thm:evT}
   Let $L/K$ be a field extension.
   \begin{enumerate}
   
   \item The following are equivalent:
   \begin{enumerate}
       \item[(i)] For any non-negative integers $n$ and $d$, there exists an element $T=(t_1,\dots, t_n)\in L^n$ such that the evaluation map
       \[ {\rm ev}_T: K[X_1,\dots, X_n]_{\le d}\to L, \quad f(X_1,\dots, X_n) \mapsto f(T) \]
       is injective.
       \item[(ii)] For any $N\in \Z_{\ge 1}$, there exists an element $\alpha\in L$ such that $[K(\alpha):K]\ge N$.
       \item[(iii)] $L/K$ is strictly infinite. 
   \end{enumerate}
   \item Assume that there exists a perfect subfield $F\subseteq K$ such that $\trdeg(K/F)< \infty$ and $K/F$ is separably generated. Then condition (iii) is equivalent to 
   \begin{itemize}
       \item[(iii')] $L/K$ is infinite. 
   \end{itemize}

   \item Assume that $L=\ol L$. Then condition (iii) is equivalent to 
   \begin{itemize}
       \item[(iii'')] $L/K$ is not exceptional. 
   \end{itemize}
   
   \end{enumerate}
\end{theorem}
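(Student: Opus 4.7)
The plan is to prove Part (1) via the cycle (i) $\Rightarrow$ (ii), (ii) $\Leftrightarrow$ (iii), (ii) $\Rightarrow$ (i), and then deduce Parts (2) and (3) from Part (1) using the extra structural hypotheses on $K$ (resp.\ on $L$). The implication (i) $\Rightarrow$ (ii) is immediate from the univariate case $n=1$: the element $t_1$ provided by (i) for $d=N-1$ is not a root of any nonzero polynomial of degree $<N$, so $[K(t_1):K]\ge N$. The equivalence (ii) $\Leftrightarrow$ (iii) reduces to two straightforward degree estimates: essentially finite forces a uniform bound on $[K(\alpha):K]$ via $\alpha^{p^r}\in K'$ and $[K(\alpha):K(\alpha^{p^r})]\le p^r$, while a uniform bound on $[K(\alpha):K]$ forces $L/K$ to be algebraic with finite separable part $K^{\sep}_L$ (by the primitive element theorem) and with inseparable exponents bounded by $\lfloor\log_p N\rfloor$, hence essentially finite.

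The main obstacle is (ii) $\Rightarrow$ (i), since a naive induction on $n$ fails: the property (ii) need not pass to an intermediate base field $K(t_1,\ldots,t_{n-1})$ (consider for example $L=K(X)$ with $L'=K(X^2)$). Instead I will use a base-encoding trick with a single high-degree element. Given $n, d$, choose $\alpha\in L$ with $[K(\alpha):K]>d(d+1)^{n-1}$ via (ii), and set $T=(\alpha,\alpha^{d+1},\alpha^{(d+1)^2},\dots,\alpha^{(d+1)^{n-1}})$. Then for $|\beta|\le d$ we have $T^\beta=\alpha^{e(\beta)}$ with $e(\beta)=\sum_i \beta_i(d+1)^{i-1}$; since each $\beta_i\in\{0,\ldots,d\}$, the map $\beta\mapsto e(\beta)$ is an injective base-$(d+1)$ expansion bounded above by $d(d+1)^{n-1}$, so the $\alpha^{e(\beta)}$ are distinct powers of $\alpha$ of exponent strictly less than $[K(\alpha):K]$, hence $K$-linearly independent; this gives injectivity of $\mathrm{ev}_T$.

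For Part (2), it suffices to show that essentially finite implies finite under the hypothesis on $K$. The key input is that the hypothesis gives $[K:K^p]=p^d$ (via a separating transcendence basis of size $d$), and that this invariant is preserved under the finite extension $K'/K$; hence $[(K')^{p^{-r}}:K']=p^{rd}<\infty$, and $L\subseteq (K')^{p^{-r}}$ forces $[L:K]<\infty$. For Part (3), suppose $L=\bar L$ is essentially finite but infinite over $K$. Then $L$ is algebraic and algebraically closed, so $L=\bar K$, and from $\bar K\subseteq (K')^{p^{-r}}$ the compositum $K^{\sep}\cdot K'/K'$ is simultaneously separable (as a translation of $K^{\sep}/K$) and purely inseparable (as a subextension of $(K')^{p^{-r}}/K'$), hence trivial; thus $K^{\sep}\subseteq K'$ and $[K^{\sep}:K]<\infty$. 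In characteristic $p$ this forces $K=K^{\sep}$ by the Artin-Schreier theorem on finite absolute Galois groups, after which $K'/K$ is purely inseparable, so $K'\subseteq K^{p^{-r_1}}$ for some $r_1$ and $\bar K\subseteq (K')^{p^{-r}}\subseteq K^{p^{-(r+r_1)}}$; this forces $K$ to be perfect, so $L=\bar K=K$, contradicting $L/K$ infinite.
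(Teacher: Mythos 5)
Your proof of Part (1) follows essentially the same strategy as the paper's. The implication (i)\,$\Rightarrow$\,(ii) by taking $n=1$ and the base-$(d+1)$ encoding trick for (ii)\,$\Rightarrow$\,(i) are exactly what the paper does (the paper uses the slightly coarser exponent bound $d'=(d+1)^n$, you use the sharper $d(d+1)^{n-1}$, but the idea is identical). Your (ii)\,$\Leftrightarrow$\,(iii) runs both contrapositives, whereas the paper proves (iii)\,$\Rightarrow$\,(ii) directly by casework on the separable closure $K_s$; the content is the same, though you should be explicit that the bound $[K(\alpha):K]\le N$ forces $K_s/K$ finite \emph{and} supplies a uniform $r=\lfloor\log_p N\rfloor$ with $\alpha^{p^r}\in K_s$ for every $\alpha\in L$, so that $L\subseteq K_s^{p^{-r}}$ is genuinely essentially finite. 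That step is correct but is the crux and deserves more than a clause.

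For Part (2) you use the invariance of $[K:K^p]$ under finite extensions (via $[K':(K')^p]\cdot[(K')^p:K^p]=[K':K]\cdot[K:K^p]$ and the Frobenius isomorphism), whereas the paper computes $K^{p^{-r}}=K(t_1^{p^{-r}},\dots,t_m^{p^{-r}})$ directly from the separating transcendence basis and then uses that $(K')^{p^{-r}}/K^{p^{-r}}$ is finite. Both routes are correct and of comparable length; yours isolates a cleaner invariant but needs the preliminary (and true) observation that $[K:K^p]<\infty$ under the hypothesis, which you should justify rather than assert equals $p^d$.

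For Part (3) your argument is correct but substantially overshoots. Once you know $L/K$ is algebraic (from essentially finite) and $L=\ol L$, you have $L=\ol K$. If $p>0$ and $L\subseteq (K')^{p^{-r}}$, then applying the $p^r$-th power map and using that $\ol K$ is perfect gives $L=L^{p^r}\subseteq K'$, so $L/K$ is finite immediately. The detour through Artin--Schreier and the separable closure of $K$ is unnecessary; the paper simply asserts the equivalence because this one-line argument is implicit in the definitions.
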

\begin{proof}
    \begin{enumerate}
        \item  
        Observe that condition (ii) is equivalent to the $n=1$ case of (i). This shows (i)$\implies$(ii). 

        \noindent {(ii)$\implies$(i):} Take $k_i:=(d+1)^{i-1}$ for $i=1,\dots,n$. Using the unique $(d+1)$-adic expression of integers,  the integers 
        $i_1k_1+\dots +i_n k_n$ are mutually distinct for $i_1,\dots i_n\in \Z_{\ge 0}$ with $i_1+\dots+i_n \le d$. Then the evaluation map
        \[ {\rm ev}_{(X^{k_1}, \dots, X^{k_n})}: K[X_1,\dots, X_n]_{\le d} \to K[X]_{\le d'}\]
        is injective, where $d'=(d+1)^n$. 
        By (ii), there exists $\alpha\in L$ 
        such that ${\rm ev}_\alpha:K[X]_{\le d'}\to L$ is injective. Statement (i) holds. 
        
        \noindent {(iii)$\implies$(ii):} 
        If $\trdeg(L/K)>0$, letting $\alpha\in L$ be a transcendental element, then $[K(\alpha):K] \ge N$ for any integer $N$. So we may assume that $L/K$ is algebraic and infinite. 
        Suppose first that $p=0$. As $L/K$ is infinite, we may take a finite subextension $K'/K$ of $L/K$ of degree $\ge N$. Since $K'/K$ is separable, $K'=K(\alpha)$ for some $\alpha\in K'$ and $[K(\alpha):K]\ge N$.

        Suppose now that $p>0$. Let $K_s$ be the separable closure of $K$ in $L$. 

        Case 1: $K_s/K$ is infinite. We show (ii) as in the $p=0$ case. 

        Case 2: $K_s/K$ is finite. One has that $L/K_s$ is purely inseparable. For any integer $N \geq 1$, take an integer~$r$ such that $p^{r+1}\ge N$. 
        Since $L/K$ is strictly infinite, one has $L \not \subseteq (K_s)^{p^{-r}}$. Choose an element $\alpha\in L \setminus (K_s)^{p^{-r}}$. Then the minimal polynomial of $\alpha$ is $T^{p^k}-a$ for some $a\in K_s$ and some $k\ge r+1$, otherwise $L\subseteq (K_s)^{p^{-r}}$. Therefore, $[K(\alpha):K]\ge [K_s(\alpha):K_s]=p^{k}\ge N$.   

 \noindent {(ii)$\implies$(iii):} Condition (ii) implies that $L/K$ is infinite. If $p=0$, then $L/K$ is strictly infinite by definition. Assume $p>0$. Suppose $L/K$ is essentially finite. Then there exist a finite subextension $K'/K$ and an integer $r$ such that $L\subseteq (K')^{p^{-r}}$. Then for any $\alpha\in L$, one has $\alpha^{p^r}\in K'$ and 
 \[ [K(\alpha):K]=[K(\alpha): K(\alpha^{p^r})][K(\alpha^{p^r}):K] \le p^r [K':K], \]
  a contradiction to condition (ii). 
  \item A strictly infinite field extension is infinite. We show the reverse. It suffices to show the case $p>0$.  
  Since $K/F$ is separably generated and $\trdeg(K/F)< \infty$, there exist algebraically independent elements $t_1,\dots, t_m$ in $K$ over $F$ such that $K/F(t_1,\dots, t_m)$ is algebraic and separable. By definition, we need to show that for any finite subextension $K'/K$ in $L$ and any integer $r$, one has $L\not \subseteq (K')^{p^{-r}}$. 

   Since $K'/K$ is finite, the extension $(K')^{p^{-r}} /K^{p^{-r}}$ is also finite. 
   As $K$ is algebraic and separable over $F(t_1,\dots, t_m)$, one has \[ K^{p^{-r}}=K\cdot F(t_1,\dots,t_m)^{p^{-r}}=K\cdot F(t_1^{p^{-r}}, \dots, t_m^{p^{-r}})= K(t_1^{p^{-r}}, \dots, t_m^{p^{-r}}) \]
   and then $[K^{p^{-r}}:K]$ is finite.
   Therefore, $(K')^{p^{-r}}/K$ is finite and $L$ cannot be contained in  $(K')^{p^{-r}}$, as $L/K$ is infinite.

  \item If $L=\ol L$, then $L/K$ is essentially finite if and only if $L=\ol K$ and $L/K$ is finite. By definition, this is equivalent to that $L/K$ is exceptional.  
 \end{enumerate}
\end{proof}

\subsection{Pairs of vector spaces and endomorphism algebras}\

Fix a field extension $L/K$ as before. 

\begin{definition} \ 

   \begin{enumerate}
       \item Consider a pair $({V_0},W)$, where ${V_0}$ is a finite-dimensional vector space over $K$ and $W$ is an $L$-subspace of $V_{0,L}={V_0}\otimes_K L$. We call $({V_0},W)$ a {\it pair of vector spaces over $(K,L)$}, and $(\dim_K {V_0}, \dim_LW)$ the dimension of $({V_0},W)$. For two pairs $({V_0},W)$ and $({V_0}',W')$ of vector spaces over $(K,L)$, define
\[ \Hom(({V_0},W), ({V_0}',W')):=\{f:\Hom_K({V_0},{V_0}'): f(W)\subseteq W' \}. \]
By $f(W)\subseteq W'$ we understand that $f$ is the extended map ${V_0}_L \to {V_0}'_L$ by linearity. 
Each element of $\Hom(({V_0},W), ({V_0}',W'))$ is called a {\it homomorphism} from $({V_0},W)$ to $({V_0}',W')$. 
A homomorphism $f\in \Hom(({V_0},W),({V_0}',W'))$ is an \emph{isomorphism} if there is an  $f'\in \Hom(({V_0}',W'),({V_0},W))$ such that $ff'=\bbI_{{V_0}'}$ and $f'f=\bbI_{{V_0}}$. 
Let $({\rm Vec}_{L/K})$ denote the category of pairs of vector spaces over $(K,L)$.  
\item When $\dim {V_0}\ge 1$, let $\End({V_0},W)$ denote the endomorphism algebra of the pair $({V_0},W)$. We call $\End({V_0},W)$ the endomorphism algebra of $W$ \emph{defined over $K$}, or the \emph{relative endomorphism algebra} of $W$ when ${V_0}$ is understood. The \emph{automorphism group} of $({V_0},W)$ is denoted by $\Aut({V_0},W):=\End({V_0},W)^\times$.
   \end{enumerate}    
\end{definition}

In the category $({\rm Vec}_{L/K})$, one has the zero object $\ul 0=(0,0)$, the identity object $\bbI =(K,L)$, the direct sum $\oplus$ and the tensor product $\otimes$ satisfying 
\[ (V_0,W)\oplus (V_0',W'):=(V_0\oplus V_0', W\oplus W'), \quad (V_0,W)\otimes (V_0',W'):=(V_0\otimes_K V_0', W\otimes_L W'), \]
and the dual defined by $({V_0},W)^\vee:=({V_0}^\vee, W^\vee)$, where 
\[ {V_0}^\vee:=\Hom_K({V_0},K), \quad W^\vee:=\{f\in {V_{0,L}}^\vee=\Hom_L({V_{0,L}},L): f(W)=0\}. \]
$({\rm Vec}_{L/K})$ is an additive tensor category but not an abelian category. In particular, it is not equivalent to the category $(R$-mod$)$ of $R$-modules for some ring $R$. 

\begin{lemma}
    Let $E$ be a finite-dimensional $K$-algebra and $G=E^\times$ be the multiplicative group. If $K$ is infinite, then $E$ is equal to the $K$-subalgebra generated by $G$. In this case, $E$ is uniquely determined by $G$.
\end{lemma}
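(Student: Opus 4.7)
The plan is to show that every $x\in E$ can be written as a sum of two units, which immediately yields both assertions.

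The main tool is the reduced-norm criterion for invertibility in a finite-dimensional $K$-algebra: for $a\in E$, the $K$-linear map $L_a\colon E\to E$, $y\mapsto ay$, satisfies $a\in G \iff L_a\in \GL_K(E) \iff \det_K L_a \neq 0$. For a fixed $x\in E$ I will consider
\[ P(\lambda) := \det_K\bigl(L_x - \lambda \cdot \Id_E\bigr) \in K[\lambda], \]
which is, up to sign, the characteristic polynomial of $L_x$ and thus has degree $\dim_K E$ in $\lambda$. In particular $P$ has at most finitely many roots in $K$. Because $K$ is infinite, I can choose $\lambda_0\in K^\times$ with $P(\lambda_0)\neq 0$, so that $u := x - \lambda_0\cdot 1 \in G$. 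Since $\lambda_0\cdot 1\in K^\times\cdot 1 \subseteq G$ as well, the identity
\[ x = u + \lambda_0\cdot 1 \]
expresses $x$ as a $K$-linear combination of two elements of $G$. It follows that $E$ is contained in, and hence equals, the $K$-subalgebra generated by $G$.

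For the uniqueness statement, I observe that $G$ is closed under multiplication, so the $K$-subalgebra it generates is already its $K$-linear span $\Span_K G$. Combined with the first part, this gives $E = \Span_K G$, and hence $E$ is determined by $G$ regarded as a subset of any common ambient $K$-vector space containing it (e.g.\ inside $\End_K V$, which is the intended setting in Section~5).

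The argument is essentially routine; the only subtlety is the requirement $\lambda_0\neq 0$, so that $\lambda_0\cdot 1$ is itself a unit. This is harmless because $K^\times$ is infinite while $P$ has only finitely many zeros. I note in passing that the hypothesis that $K$ be infinite cannot be dropped: for instance over $K=\bbF_2$ the algebra $E=\bbF_2\times \bbF_2$ has $G=\{(1,1)\}$, whose $K$-span is the diagonal, a proper subalgebra of $E$.
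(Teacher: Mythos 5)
Your proof is correct, but it takes a genuinely different and more elementary route than the paper. The paper's proof is geometric: it observes that the affine $K$-scheme $\ul E$ is isomorphic to affine space (so rational), that the unit group scheme $\ul G\subseteq \ul E$ is a Zariski-open subvariety, and that over an infinite field the $K$-points of a rational variety are Zariski dense; since the subalgebra $E'$ generated by $G$ defines a closed subscheme containing the dense subset $G$, it must be all of $\ul E$. Your proof instead works entirely in linear algebra: you use that $a\in E$ is a unit iff $\det_K L_a\neq 0$, note that $\lambda\mapsto \det_K(L_x-\lambda\Id_E)$ is a nonzero polynomial of degree $\dim_K E$, and hence (since $K$ is infinite) find $\lambda_0\in K^\times$ off its zero set, giving $x=(x-\lambda_0 1)+\lambda_0 1$ as a sum of two units. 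This actually proves a slightly sharper statement than the lemma needs — every element is a sum of two units, so $E=\Span_K G$ on the nose — and avoids invoking any scheme-theoretic machinery. The paper's density argument is shorter to state given the background, but yours is more self-contained and makes the role of the hypothesis ``$K$ infinite'' completely transparent (finitely many bad scalars); it would be a fine replacement. Your final remark about how the subalgebra generated by $G$ is already $\Span_K G$, because $G\ni 1$ is multiplicatively closed, is a valid shortcut to the uniqueness assertion, and your $\bbF_2\times\bbF_2$ counterexample is the same phenomenon as the remark following the lemma in the paper.
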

\begin{proof}
Let $\ul E$ be the affine space over $K$ defined by $E$ and $\ul G$ be the multiplication group scheme associated to $E$ over $K$. Let $E'$ be the $K$-subalgebra generated by $G$. Since $\ul E$ is rational and $\ul G\subseteq \ul E$ is Zariski-open, $\ul G$ is rational and $G$ is dense in $\ul G$. It follows that $G$ is dense in $\ul E$. As $\ul E'\subseteq \ul E$ is closed, one has $\ul E'\supseteq \ol G=\ul E$. This shows $E'=E$.
\end{proof}

\begin{remark}
    When $K$ is finite, $E$ may not be determined by $G$. For example, if $K=\F_2$ and $E=\F_2^n$, then $G=1\in E$ and $E'=\F_2 \cdot 1$.
\end{remark}

Let $({V_0},W)$ be a pair of vector spaces over $(K,L)$ of dimension $(n,r)$, where $n\ge 1$ and $0\le r \le n$. We denote by $\Gr({V_0},r)$ the Grassmannian of subspaces of dimension $r$ in ${V_0}$. It is a smooth projective scheme over $K$ representing the functor which associates to a $K$-scheme~$S$ the set of locally free $\calO_S$-modules $\calF\subseteq {V_0}\otimes_K\calO_S$ of rank $r$ such that $\calF$ locally for the Zariski topology is a direct summand of ${V_0}\otimes_K\calO_S$.

We may regard $W$ as a point in $\Gr({V_0},r)(L)$.
The subspace $W$ is said be to defined over $K$ if $\dim_K (W\cap {V_0})=\dim_L W$. Then $W\in \Gr({V_0},r)(K)$ if and only if $W$ is defined over~$K$.
If $n\ge 2$, $0<r<n$ and $W\in \Gr({V_0},r)(K)$, then $\End({V_0},W)$ is the subalgebra that preserves the $r$-dimensional $K$-subspace ${V_0}\cap W$ and hence is a parabolic subalgebra of type $(r,n-r)$ by definition; for a suitable $K$-basis of ${V_0}$, it is a subalgebra $E$ represented as
\[\begin{bmatrix}
  \Mat_r(K) & \Mat_{r,n-r} (K) \\
0 & \Mat_{n-r} (K)
\end{bmatrix}.
\]
The unit group $E^\times$ is a maximal parabolic subgroup of type $(r,n-r)$.

\begin{lemma}\label{lm:end_basic} Assume $n\ge 2$ and $0<r <n$.
\begin{enumerate}
        \item There is a natural bijection between $\Gr({V_0},r)(K)$ and the set of parabolic subalgebras of $\End({V_0})$ of type $(r,n-r)$. 
        \item One has $\End({V_0},W)\subsetneq \End({V_0})$.
        \item If $L'$ is a field extension of $L$, then $\End({V_0}, W_{L'})=\End({V_0},W)$, where $W_{L'}=W\otimes_L L'$.   
    \end{enumerate}
\end{lemma}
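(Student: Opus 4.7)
For part (1), the plan is to show that $W\mapsto \End(V,W)$ is a bijection from $\Gr(V,r)(K)$ onto the set of parabolic subalgebras of $\End(V)$ of type $(r,n-r)$. The image lies in the correct target by the discussion preceding the lemma. Surjectivity is immediate from the definition of ``parabolic subalgebra of type $(r,n-r)$'': such an algebra $E$ is, by definition, block upper triangular of shape $(r,n-r)$ in some $K$-basis of $V$, and it therefore equals $\End(V,W)$ for $W$ spanned by the first $r$ basis vectors. For injectivity, I would recover $W$ from $E=\End(V,W)$ as the unique non-trivial proper $E$-invariant subspace of $V$; the uniqueness is a direct computation in the block upper triangular presentation, using that the $(1,1)$-block is all of $\Mat_r(K)$ to force any non-zero $E$-submodule of $W$ to equal $W$, and that the off-diagonal block is all of $\Mat_{r,n-r}(K)$ combined with the $(2,2)$-block $\Mat_{n-r}(K)$ to force any vector with non-zero projection to the last $n-r$ coordinates to generate the whole of $V$ under $E$.

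For part (2), the idea is to reduce to the $L$-linear situation via part (1). Applying (1) over $L$ to the pair $(V_L,W)$, one sees that $E_L:=\End_L(V_L,W)$ is a parabolic $L$-subalgebra of $\End_L(V_L)$ of type $(r,n-r)$, and hence a proper subalgebra since $0<r<n$. The natural inclusion $\iota\colon\End_K(V)\hookrightarrow\End_L(V_L)$, $\alpha\mapsto\alpha\otimes 1_L$, satisfies $\End(V,W)=\iota^{-1}(E_L)$ by the very definition of $\End(V,W)$. If one had $\End(V,W)=\End_K(V)$, then $\iota(\End_K(V))\subseteq E_L$; since $\iota$ extends to an $L$-linear isomorphism $\End_K(V)\otimes_K L\isomto\End_L(V_L)$ and $E_L$ is $L$-stable, the $L$-span of $\iota(\End_K(V))$, which is all of $\End_L(V_L)$, would lie in $E_L$, contradicting properness of $E_L$.

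For part (3), the inclusion $\End(V,W)\subseteq\End(V,W_{L'})$ is formal, since $\alpha_L(W)\subseteq W$ implies, after tensoring with $L'$, that $\alpha_{L'}(W_{L'})\subseteq W_{L'}$. For the converse, I would take $\alpha\in\End_K(V)$ with $\alpha_{L'}(W_{L'})\subseteq W_{L'}$ and show that $\alpha_L(w)\in W$ for every $w\in W$. Fix an $L$-basis $e_1,\dots,e_r$ of $W$ and extend to an $L$-basis $e_1,\dots,e_n$ of $V_L$; after tensoring with $L'$, the families $e_1\otimes 1,\dots,e_r\otimes 1$ and $e_1\otimes 1,\dots,e_n\otimes 1$ are $L'$-bases of $W_{L'}$ and $V_{L'}$ respectively. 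Expanding $\alpha_L(w)=\sum_{i=1}^n c_i e_i$ with $c_i\in L$, the condition $\alpha_L(w)\otimes 1\in W_{L'}$ forces $c_i=0$ for all $i>r$, whence $\alpha_L(w)\in W$.

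All three parts are elementary, but the main obstacle I anticipate is the injectivity portion of (1), which rests on a careful analysis of the $E$-submodule lattice of $V$ for a parabolic $E$; once this is in place, the remaining content of (1) is definitional, and parts (2) and (3) reduce to clean scalar-extension arguments using the outcome of (1).
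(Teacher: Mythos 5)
Your proposal is correct, and parts (2) and (3) proceed along essentially the same lines as the paper: part (2) in both cases tensors with $L$ (the $L$-span of the extended image must be all of $\End_L(V_L)$, which cannot preserve the proper nonzero subspace $W$), and part (3) in both cases reduces to the identity $V_L \cap W_{L'} = W$, which you verify in coordinates and the paper simply asserts.

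Part (1) is where you diverge. The paper observes that $\GL(V)$ acts transitively on both $\Gr(V,r)(K)$ (via the orbit–stabiliser isomorphism $\GL(V)/P_0\simeq \Gr(V,r)(K)$) and on the set $P(r,n-r)$ of parabolic subalgebras of type $(r,n-r)$, and that the stabiliser of a base point is the same parabolic subgroup $P_0$ in both cases, giving the bijection by identification of homogeneous spaces. You instead construct the map $W\mapsto\End(V,W)$ directly and prove it is a bijection: surjectivity is read off from the block upper triangular normal form, and injectivity follows from your observation that $W$ is the unique nonzero proper $E$-invariant subspace of $V$ — the $(1,1)$-block $\Mat_r(K)$ forces any nonzero $E$-submodule inside $W$ to be all of $W$, while the $\Mat_{r,n-r}(K)$ and $\Mat_{n-r}(K)$ blocks force any vector with a nonzero tail to generate $V$. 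Both arguments are sound; the paper's is shorter and leans on the standard homogeneous-space description of the Grassmannian, while yours is more hands-on and makes explicit how $W$ is recovered from $E$ intrinsically, which is a fact the paper later uses implicitly (e.g.\ in Proposition~\ref{prop:dec}(4)–(5)). One small stylistic note on your part (2): invoking part (1) ``over $L$'' to see that $\End_L(V_L,W)\subsetneq\End_L(V_L)$ is a heavier tool than needed — for $0<r<n$ this properness is immediate — but the argument is still correct.
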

\begin{proof}
\begin{enumerate}
    \item By definition, the set $P(r,n-r)$ of parabolic subalgebras of type $(r,n-r)$ is transitive under the action of $\GL({V_0})$, so of $\GL({V_0})/\Stab(E_0)$, where $E_0\in P(r,n-r)$ is a base point. One directly checks that $\Stab(E_0)$ is a parabolic subgroup $P_0$ of type $(r,n-r)$. On the other hand, one has an isomorphism $\GL({V_0})/P_0\simeq \Gr({V_0},r)(K)$. 
    \item If $\End({V_0},W)=\End({V_0})$, then every element $\alpha \in \End({V_0},W)\otimes L=\End({V_0})\otimes L$ preserves $W$, a contradiction. 
    \item It is clear that $\End({V_0},W)\subseteq \End({V_0}, W_{L'})$. Conversely, if $\alpha(W_{L'})\subseteq W_{L'}$, then $\alpha(W)\subseteq V_{0,L}\cap W_{L'}=W$.
     \end{enumerate}
\end{proof}

For each $W\in \Gr({V_0},r)(L)$, there is a unique smallest $L$-subspace~$\wt W$ that contains $W$ and is defined over $K$. To see this: if $W_1$ and $W_2$ are two $L$-subspaces that are defined over $K$ and contain $W$, then so is $W_1\cap W_2$. Therefore, such an $L$-subspace exists and is unique. Set $W_0:=W\cap {V_0}$; this is the largest $K$-subspace contained in $W$.
If $K$ is perfect, then the $L$-subspace $\wt W$ is equal to the sum of the $L$-subspaces $\sigma(W)$ for all $\sigma \in \Aut(L/K)$, and $W_0$ is equal to the intersection of $\sigma(W)$ for all $\sigma\in \Aut(L/K)$.

\begin{definition}\label{def:envelope} Let $W\in \Gr({V_0},r)(L)$.
\begin{enumerate}
    \item Let $\wt W$ be the smallest $L$-subspace containing $W$ that is defined over $K$, set its $K$-model by $\wt W_0=\wt W \cap {V_0}$, and let $W_0:=W\cap {V_0}$.   
    We call $\wt W$ the \emph{$K$-hull} of $W$ and its $K$-model $\wt W_0=\wt W \cap {V_0}$  the \emph{envelope} of $W$.
    \item We call $W$ \emph{$K$-null} if $W_0=0$ and \emph{$K$-dense} if $\wt W_0={V_0}$.
    \end{enumerate}
\end{definition}

Set 
\[ \End({V_0}, W_0, \wt W_0):=\{\alpha\in \End_{K}({V_0}): \alpha(W_0)\subseteq W_0,\alpha(\wt W_0)\subseteq \wt W_0 \}. \]
Clearly if $\alpha\in \End({V_0},W)$, then $\alpha\in \End({V_0}, W_0, \wt W_0)$.
Denote by 
\[ p_{\wt W_0/W_0}: \End({V_0},W_0, \wt W_0)\to \End(\wt W_0/W_0)\] 
the natural projection and set $\ol W:=W/W_{0,L}$. Then 
\begin{equation}\label{eq:red}
    \End({V_0},W)=\{\alpha\in \End({V_0}, W_0, \wt W_0): p_{\wt W_0/W_0}(\alpha)(\ol W)\subseteq \ol W\}. 
\end{equation}

Denote by 
\[ \Gr({V_0},r)^{\rm nd}\subseteq \Gr({V_0},r)(L) \] 
the subset consisting of $L$-subspaces $W\in \Gr({V_0},r)(L)$ such that $W_0=0$ and $\wt W=V_{0,L}$ (The supscript "nd" refers to $K$-null and $K$-dense). 
Then we have the decomposition
\begin{equation}\label{eq:dec}
    \Gr({V_0},r)(L)=\Gr({V_0},r)(K) \sqcup \left( \coprod_{V_1\subseteq V_2} \Gr(V_2/V_1, r-\dim V_1)^{\rm nd}\right),
\end{equation}
where $V_1\subseteq V_2$ runs through all chains of two $K$-subspaces with $\dim V_1<r < \dim V_2$. 
Here we make the identification
\[ \Gr(V_2/V_1, r-\dim V_1)(L)=\{W\in \Gr(V,r)(L): V_{1,L} \subseteq W \subseteq V_{2,L}\, \}.  \]
We can also view $\Gr({V_0},r)(K)$ as the set of the special chains $V_1\subseteq V_2$ of $K$-subspaces with $\dim V_1=\dim  V_2=r$.

\begin{example} 
\begin{enumerate}
    \item Let $(n,r)=(n,1)$ and $n>1$. Then $\Gr({V_0},1)(L)=\bbP^{n-1}(L)$ and 
\begin{equation}
    \Gr({V_0},1)^{\rm nd}=\bbP^{n-1}(L)\setminus \bigcup_H \bbP_H(L),
\end{equation}
where $H$ runs through all $K$-rational hyperplanes of ${V_0}$ and $\bbP_H\subseteq \bbP^{n-1}$ is the projective $(n-2)$-space associated to $H$. 

When $n=2$, $K=\R$ and $L=\C$, we have
\[ \Gr({V_0},1)^{\rm nd}=\bbP^1(L)\setminus \bbP^1(K)=\C\setminus \R, \]
which is the union of the upper and lower half-planes. 

When $K$ is a non-archimedean local field and $L=\C_K:=\wh {\ol K}$ is the completion of the algebraic closure of $K$, we have 
\[ \Gr({V_0},1)^{\rm nd}=\Omega_K^n:=\bbP^{n-1}(\C_K) \setminus \bigcup_{H} \bbP_H(\C_K). \]
Here $\Omega_K^n$ is the Drinfeld period space of rank $n$ associated to $K$ introduced in \cite{drinfeld1}. As is well known, this space forms a rigid analytic space, which is equipped with a natural action of $\GL({V_0})\simeq \GL_n(K)$. 

\item When $(n,r)=(2r,r)$, $r\ge 1$, $K=\R$ and $L=\C$, we have that $\Gr({V_0},r)^{\rm nd}$ is in bijection with the set of real Hodge structures of dimension $2r$ with Hodge type $\{(-1,0), (0,-1)\}$. 
\end{enumerate}
\end{example}

By \eqref{eq:red} and \eqref{eq:dec}, one is reduced to computing $\End({V_0},W)$  where $W$ is $K$-null and $K$-dense. 
Based on this connection, the space $\Gr({V_0},r)^{\rm nd}$ can be viewed as an algebraic generalisation of a period space. When the field $K$ is a local field, $\Gr({V_0},r)^{\rm nd}$ is expected to form an analytic space in an appropriate sense, on which $\GL({V_0})$ acts bi-analytically. Furthermore, $\Gr({V_0},r)$ can be viewed as a compactification of $\Gr({V_0},r)^{\rm nd}$ in a suitable sense, on which $\GL({V_0})$ acts analytically and $\GL({V_0})$-equivariantly. Moreover, one sees from Equation \eqref{eq:dec} that the boundary components are algebraic period spaces of the same kind with smaller rank, and the group $\GL({V_0})$ acts transitively on the set of boundary components of same kind. \\

Choose a $K$-basis $e_1,\dots, e_n$ for ${V_0}$, and write ${V_0}=\Mat_{n\times 1}(K)$. Choose a $L$-basis $v_1,\dots v_r$ for $W$ and let
\begin{equation}\label{eq:P}
    P=[v_1, \dots, v_r]\in \Mat_{n\times r}(L)
\end{equation}
be the representative matrix of $W$ with respect to $\{v_i\}$. We recover $W$ from $P$ by the $L$-span $\<P\>_L$ of the column vectors of $P$.

If $r=1$ and $W=\<P\>_L$ is $K$-dense, where $P=[x_1,\dots, x_n]^T\in \Mat_{n\times 1}(L)$, then each $x_i$ is nonzero, that is, every row in $P$ is $L$-linearly independent. However, when $r>1$, it is not true that 
any $r$ rows of $P$ are $K$-linearly independent. That is, some $r\times r$ minor of $P$ may be zero. Below is an example.

\begin{example}
Put 
\[ P=\begin{bmatrix}
    1 & 0 \\
    x & 0 \\
    0 & 1
\end{bmatrix}, \ x\in L\setminus K, \quad \text{and} \quad W=\< P \>_L. \]
Then $W$ is $K$-dense but the first $2\times 2$-block is not invertible. In this case ${V_0}=V_1 \oplus V_2=\<e_1,e_2\>_{K} \oplus \<e_3\>_{K}$. Write
\[ P=\begin{bmatrix}
    P_1 \\
    P_2
\end{bmatrix},\quad  P_1=\begin{bmatrix}
    1 & 0 \\
    x & 0 
\end{bmatrix},\quad  P_2=\begin{bmatrix}
    0 & 1  
\end{bmatrix}. \]
Then $W=W_1 \oplus W_2$ with $W_1=\<P_1\>_L \subseteq V_1\otimes L$ and  $W_2=\<P_2\>_L \subseteq V_2\otimes L$, and each $L$-subspace $W_i$ is $K$-dense in $V_i$. 
\end{example}

\begin{definition} Let $({V_0},W)$ a pair of vector spaces as before. 
    \begin{enumerate} 
    
    \item A \emph{decomposition of $({V_0},W)$} is a decomposition into pairs of vector spaces:
    \[ ({V_0},W)=(V_1,W_1)\oplus (V_2,W_2), \] where for $i=1,2$, ${V}_i$ is a $K$-vector subspace of ${V_0}$ and $W_i\subseteq V_{i,L}$ is an $L$-subspace satisfying ${V_0}=V_1\oplus V_2$ and $W=W_1\oplus W_2$. We call $({V_0},W)$ \emph{indecomposable} if ${V_0}\neq 0$, and for any decomposition $({V_0},W)=(V_1,W_1)\oplus (V_2,W_2)$, either $V_1=0$ or $V_2=0$.
    
    \item We call a decomposition   
    $W=W_1\oplus W_2$ of $W$ into $L$-subspaces \emph{defined over~$K$} if the corresponding envelopes $\wt W_{1,0}$ and $\wt W_{2,0}$ have zero intersection.
     We say that $W$ is \emph{$K$-indecomposable} if $W\neq 0$, and for any decomposition $W=W_1\oplus W_2$ which is defined over $K$, either $W_1=0$ or $W_2=0$.
\end{enumerate}
\end{definition}

The following lemma shows that $\wt W_{1,0}\cap \wt W_{2,0}=0$ if and only if $\wt W_{1}\cap \wt W_{2}=0$; also see Proposition~\ref{prop:dec}(1). 

\begin{lemma}\label{lm:ff_descent}
   Let $A$ be a commutative ring and $B$ be a commutative flat $A$-algebra. Let $M$ be an $A$-module and $M_1,M_2\subseteq M$ be submodules of $M$. Viewing $M_1\otimes_A B$ and $M_2\otimes_A B$ as $B$-submodules of $M\otimes_A B$, we have 
   \begin{equation}\label{eq:5}
       (M_1+M_2)\otimes_A B = (M_1\otimes_A B) + (M_2\otimes_A B), \quad (M_1\cap M_2)\otimes_A B =(M_1\otimes_A B) \cap (M_2\otimes_A B).
   \end{equation} 
   Moreover, if $B$ is faithfully flat over $A$ then 
   \begin{equation} \label{eq:6}
       M_1\cap M_2=0 \iff (M_1\otimes_A B) \cap (M_2\otimes_A B)=0.
   \end{equation}
\end{lemma}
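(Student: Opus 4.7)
The plan is to exploit the flatness of $B$ to transport suitable short exact sequences from $A$-modules to $B$-modules, and then read off the three claimed identities.

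First I would verify a basic point that makes the statement even meaningful: since $B/A$ is flat, the inclusions $M_i \hookrightarrow M$ induce injections $M_i \otimes_A B \hookrightarrow M \otimes_A B$, so the $B$-submodules $M_i \otimes_A B$ of $M \otimes_A B$ appearing in \eqref{eq:5} and \eqref{eq:6} are well-defined.

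For the first identity, I would write $M_1 + M_2$ as the image of the map $\varphi:M_1 \oplus M_2 \to M$, $(m_1,m_2) \mapsto m_1+m_2$. Tensoring with the flat algebra $B$ preserves images (since it is exact), so $(M_1+M_2)\otimes_A B$ is the image of $\varphi \otimes \mathrm{id}_B$, which under the canonical isomorphism $(M_1 \oplus M_2)\otimes_A B \simeq (M_1\otimes_A B) \oplus (M_2 \otimes_A B)$ is exactly $(M_1 \otimes_A B) + (M_2 \otimes_A B)$.

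For the second identity, I would apply flatness to the short exact sequence
\[
0 \longrightarrow M_1\cap M_2 \longrightarrow M_1 \longrightarrow M/M_2,
\]
where the second map is the composition of the inclusion $M_1\hookrightarrow M$ with the projection $M\twoheadrightarrow M/M_2$. Tensoring with $B$ preserves exactness, and since $M_1\otimes_A B \hookrightarrow M\otimes_A B$ (flatness again), the kernel of $M_1\otimes_A B \to (M/M_2)\otimes_A B$ is identified with $(M_1\otimes_A B) \cap \ker\bigl(M\otimes_A B \to (M/M_2)\otimes_A B\bigr)$. Flatness applied once more to $0\to M_2 \to M \to M/M_2 \to 0$ identifies that kernel with $M_2\otimes_A B$, giving $(M_1\cap M_2)\otimes_A B = (M_1\otimes_A B) \cap (M_2\otimes_A B)$.

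The third statement follows immediately: if $B$ is faithfully flat over $A$, then for any $A$-module $N$ one has $N=0 \iff N\otimes_A B = 0$. Applied to $N=M_1\cap M_2$ together with \eqref{eq:5}, this yields the equivalence \eqref{eq:6}. There is no real obstacle here; the only thing to be careful about is the implicit identification of $M_i\otimes_A B$ as a submodule of $M\otimes_A B$, which is precisely what flatness provides.
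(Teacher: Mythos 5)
Your proof is correct, and it takes a genuinely (if mildly) different route from the paper's. The paper works with the single short exact sequence
\[
0 \longrightarrow M_1\cap M_2 \longrightarrow M_1\oplus M_2 \longrightarrow M_1+M_2 \longrightarrow 0,
\]
tensors it with $B$, writes down the analogous sequence with $M_i\otimes_A B$ in place of $M_i$, verifies directly that the rightmost terms agree (i.e.\ proves the sum identity by a double inclusion), and then concludes that the leftmost terms agree, giving both identities in \eqref{eq:5} from one sequence. You instead prove the sum identity by observing that $M_1+M_2$ is the image of $M_1\oplus M_2\to M$ and that flat tensoring preserves images, and then prove the intersection identity separately via the exact sequence $0\to M_1\cap M_2\to M_1\to M/M_2$ combined with $0\to M_2\to M\to M/M_2\to 0$. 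Both arguments rest on the same core fact (exactness of $-\otimes_A B$), but your decomposition handles the two identities independently while the paper's single exact sequence yields them simultaneously. Your version is perhaps slightly easier to read step by step; the paper's is a bit more economical. Your treatment of the faithfully flat direction is identical to the paper's, and you are right to note explicitly that flatness is also what lets one regard $M_i\otimes_A B$ as genuine submodules of $M\otimes_A B$ in the first place.
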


\begin{proof}    
    Consider the short exact sequence
\begin{equation*}
    0 \to M_1\cap M_2 \to M_1\oplus M_2 \to M_1 +M_2 \to 0.
\end{equation*}    
We have the short exact sequences
\[
\begin{split}
    0 \to (M_1\cap M_2)\otimes_A B \to & (M_1\oplus M_2) \otimes_A B\to (M_1 +M_2) \otimes_A B\to 0,\\
    0 \to (M_1\otimes_A B) \cap (M_2\otimes_A B) \to & M_1\otimes_A B\oplus M_2\otimes_A B \to (M_1\otimes_A B) +(M_2\otimes_A B) \to 0.
\end{split}
\]
We now show that $(M_1 +M_2) \otimes_A B=(M_1\otimes_A B) +(M_2\otimes_A B)$. We have the inclusion $\supseteq$ as $M_1+M_2\supseteq  M_1, M_2$. On the other hand, $(M_1 +M_2) \otimes_A B$ is generated by the elements $(m_1+m_2)\otimes b=m_1\otimes b+m_2\otimes b$ for $m_1\in M_2, m_2\in M_2$ and $b\in B$, which are all contained in $(M_1\otimes_A B) +(M_2\otimes_A B)$. Thus, we get the other inclusion $\subseteq$. Then we can identify the above two short exact sequences and obtain 
\[ (M_1\cap M_2)\otimes_A B =(M_1\otimes_A B) \cap (M_2\otimes_A B).\]
If $B$ is faithfully flat over $A$, then $(M_1\cap M_2)\otimes_A B=0$ if and only if $M_1\cap M_2=0$ and then statement \eqref{eq:6} follows from \eqref{eq:5}. 
\end{proof}

\begin{proposition}\label{prop:dec}
  Let $({V_0},W)$ be a pair of vector spaces as above.
  \begin{enumerate}
      \item A decomposition $W=W_1\oplus W_2$ is defined over $K$ if and only if  the corresponding $K$-hulls $\wt W_1$ and $\wt W_2$ have zero intersection.
      
      \item If $({V_0},W)=(V_1,W_1)\oplus (V_2,W_2)$ is a decomposition of $({V_0},W)$, then the decomposition $W=W_1\oplus W_2$ is defined over $K$. Conversely, any decomposition $W=W_1\oplus W_2$ that is defined over $K$ arises from a decomposition $({V_0},W)=(V_1,W_1)\oplus (V_2,W_2)$. Moreover, we then have the canonical decomposition $(\wt W_0,W)=(\wt W_{1,0},W_1)\oplus (\wt W_{2,0},W_2)$.

      \item If we have a decomposition $({V_0},W)=\oplus_{i=1}^t (V_i, W_i)$ with $dim_K V_i\ge 1$ for all $i$, then 
      \begin{equation}\label{eq:endo.1}
         \End(V_0,W)=\begin{bmatrix}
          H_{11} & \cdots & H_{1t} \\
          H_{21} & \cdots & H_{2t} \\
          \vdots & \ddots & \vdots \\
          H_{t1} & \cdots & H_{tt}
      \end{bmatrix}, \quad H_{ij}:=\Hom((V_j,W_j),(V_i,W_i)). 
      \end{equation}
      
      If $f=(f_{ij})$ with $f_{ij}\in \Hom(V_j,V_i)$ and $v=(v_1,\dots, v_t)$ with $v_i\in V_{i,L}$, then 
      \begin{equation}\label{eq:fv}
         f(v)=(\sum_{k=1}^t f_{ik} v_k)_i. 
      \end{equation}

      \item There is a decomposition 
      \begin{equation}\label{eq:can_dec}
         ({V_0},W)=(V_1,W_1) \oplus (V_2,W_2) \oplus(V_3,0) 
      \end{equation}
      with $V_1=W_0, W_1=W_{0,L}$, and $V_1\oplus V_2=\wt W_0$. The endomorphism algebra $\End({V_0},W)$ is given by
      \begin{equation}\label{eq:endo.2}
          \End({V_0},W)=\begin{bmatrix}
          \End(V_1) & \Hom(V_2,V_1) & \Hom(V_3,V_1) \\
          0 & \End(V_2,W_2) & \Hom(V_3, V_2) \\
          0 &  0 & \End(V_3)
      \end{bmatrix}.
      \end{equation} 
      Moreover, the subspaces $W_0$ and $\wt W_0$ are uniquely  determined by the endomorphism algebra $\End({V_0},W)$.

      \item Suppose $W'\in \Gr({V_0},r)(L)$ is another member such that $\End({V_0},W')$ is conjugate to $\End({V_0}, W)$ under $\GL({V_0})$. Then there is an element $\gamma\in \GL({V_0})$ and a decomposition
      \begin{equation}\label{eq:dec2}
         ({V_0},\gamma W')=(V_1,W_1') \oplus (V_2,W'_2) \oplus(V_3,0) 
      \end{equation}
      with $V_1$, $V_2$, $V_3$ and $W_1$ as in part (4), such that $\End(V_2,W_2)=\End(V_2, \gamma W_2')$. 
    
  \end{enumerate}  
\end{proposition}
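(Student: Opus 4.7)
My plan is to treat parts (1)--(3) as routine algebraic manipulation, construct the decomposition of (4) via an explicit projection, identify $W_0$ and $\wt W_0$ intrinsically via a socle argument (the main obstacle), and reduce (5) to this uniqueness via a single conjugation. For (1), apply Lemma~\ref{lm:ff_descent} with $A=K$, $B=L$, $M=V$, $M_i=\wt W_{i,0}$: since $\wt W_i = \wt W_{i,0}\otimes_K L$, the equivalence \eqref{eq:6} immediately gives $\wt W_{1,0}\cap \wt W_{2,0}=0 \iff \wt W_1\cap \wt W_2=0$. For (2), the forward direction uses that any $K$-defined $L$-subspace containing $W_i$ must contain $\wt W_i$, so $\wt W_i\subseteq V_{i,L}$ and hence $\wt W_{1,0}\cap \wt W_{2,0}\subseteq V_1\cap V_2=0$. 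For the converse, take $V_i := \wt W_{i,0}$ to get the decomposition of $(\wt W_0,W)$, then extend to all of $V$. The moreover identity reduces to $\wt W = \wt W_1+\wt W_2$, which follows from two minimality observations combined with part~(1) to make the sum direct. Part~(3) is a direct evaluation: feeding $f=(f_{ij})$ vectors supported in a single summand $W_j$ forces $f_{ij}(W_j)\subseteq W_i$ separately.

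For the existence of the canonical decomposition in (4), pick $K$-complements so that $V_1\oplus V_2=\wt W_0$ and $V_1\oplus V_2\oplus V_3=V$, and set $W_2 := p_2(W)$ where $p_2\colon V_L\to V_{2,L}$ is the projection. Since $p_1(w)\in V_{1,L}=W_1\subseteq W$ for every $w\in W$, one has $p_2(w)=w-p_1(w)\in W$, so $W_2\subseteq W$ and $W=W_1\oplus W_2$. A direct check using $W\cap V=V_1$ and minimality of $\wt W_0$ confirms that $W_2$ is $K$-null and $K$-dense in $V_2$. The matrix form of $\End(V,W)$ then follows from (3) together with four elementary vanishings: $H_{31}=0$ because $f(V_{1,L})=0$ forces $f=0$; $H_{21}=0$ because $f(V_1)\otimes L\subseteq W_2$ combined with $W_{2,0}=0$ forces $f=0$; $H_{32}=0$ because $W_2$ is $K$-dense, so $(\ker f)_L\supseteq \wt W_2=V_{2,L}$ forces $f=0$; and $H_{11}=\End(V_1)$ since $W_1=V_{1,L}$ is preserved automatically.

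The uniqueness clause is the main obstacle. I identify $W_0=\mathrm{soc}_R(V)$ as the socle of $V$ as a module over $R:=\End(V,W)\subseteq\End_K(V)$: for any nonzero $R$-submodule $U\ni u=u_1+u_2+u_3$, applying the idempotent $e_1$ (if $u_1\ne 0$) or an element of $\Hom(V_i,V_1)\subseteq R$ (if $u_1=0$ but $u_i\ne 0$) produces a nonzero vector in $V_1\cap U$, and simplicity of $V_1$ over $\End(V_1)\subseteq R$ then forces $V_1\subseteq U$; so $V_1$ is the unique minimal nonzero $R$-submodule. Applying the same analysis to the dual pair $(V^\vee,W^\perp)$, whose canonical decomposition is $(V_3^\vee,V_{3,L}^\vee)\oplus(V_2^\vee,W_2^\perp)\oplus(V_1^\vee,0)$ and whose endomorphism algebra is $R^{\mathrm{op}}$, gives $(W^\perp)_0=V_3^\vee$, whence $\wt W_0=(\mathrm{soc}_{R^{\mathrm{op}}}(V^\vee))^\perp$. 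Both identifications depend only on the pair $R\subseteq \End_K(V)$. For (5), given $\sigma\in \GL(V)$ with $\End(V,W')=\sigma R\sigma^{-1}$, set $\gamma:=\sigma^{-1}$; then $\End(V,\gamma W')=\gamma\End(V,W')\gamma^{-1}=R$, so uniqueness from (4) forces $(\gamma W')_0=W_0$ and the envelope of $\gamma W'$ to equal $\wt W_0$. Taking the canonical decomposition of $(V,\gamma W')$ with the same $V_1,V_2,V_3$ and $W_1$ as for $(V,W)$, and comparing the $(2,2)$-entries of the identical matrix presentations $\End(V,\gamma W')=\End(V,W)$, yields $\End(V_2,W_2')=\End(V_2,W_2)$, which is the desired equality.
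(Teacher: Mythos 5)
Your handling of parts (1)--(3) and the construction in (4) follows essentially the same route as the paper's, with only cosmetic variations (e.g.\ defining $W_2=p_2(W)$ rather than $W_2=V_{2,L}\cap W$, which give the same subspace). Your approach to the uniqueness step in (4) is genuinely different and conceptually appealing, but it has a gap. The claimed identity $W_0=\mathrm{soc}_R(V)$ fails whenever $W_0=0$ and $V\neq 0$: a nonzero finite-dimensional $R$-module always has nonzero socle, while $W_0$ may vanish. A concrete counterexample is $K=\R$, $L=\C$, $V=\R^2$, $W=\C\cdot(1,i)$; then $W_0=0$, $\wt W_0=V$, yet $R=\End(V,W)\simeq\C\subseteq M_2(\R)$ makes $V$ a simple $R$-module, so $\mathrm{soc}_R(V)=V\neq 0$. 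Your argument (applying $e_1$, or a map in $\Hom(V_i,V_1)\subseteq R$) does correctly show that every nonzero $R$-submodule contains $V_1$ when $V_1\neq 0$, hence $\mathrm{soc}_R(V)=V_1$ in that case; but when $V_1=0$ the step ``produces a nonzero vector in $V_1\cap U$'' collapses, and the stated characterization is simply wrong. The same issue afflicts the dual characterization $\wt W_0=(\mathrm{soc}_{R^{\mathrm{op}}}(V^\vee))^\perp$ when $\wt W_0=V$, i.e.\ when $(W^\perp)_0=0$.

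To repair this you would need to first establish that $\dim W_0$ and $\dim\wt W_0$ are invariants of $R\subseteq\End_K(V)$ (the paper reads these off from the block structure of \eqref{eq:endo.2}), treat the cases $\dim W_0=0$ and $\dim\wt W_0=n$ separately, and only invoke the socle argument when $\dim W_0>0$ (resp.\ the dual socle when $\dim\wt W_0<n$). By contrast, the paper bypasses any module-theoretic characterization: after reading off the two dimensions, it assumes $W_0'\neq V_1$ with equal dimensions and derives a contradiction from the surjectivity of the projections $p_1\colon\End(V,W)\to\Hom(V,V_1)$ and $\iota_3\colon\End(V,W)\to\Hom(V_3,V)$; that argument handles the degenerate cases uniformly. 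Your part (5) is correct assuming (4), and takes essentially the same route as the paper.
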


\begin{proof}
\begin{enumerate}
    \item It suffices to show that $\wt W_{1,0} \cap \wt W_{2,0}=0$ if and only if $\wt W_1\cap \wt W_2=0$. If $\wt W_1\cap \wt W_2=0$, then $\wt W_{1,0} \cap \wt W_{2,0}\subseteq \wt W_1\cap W_2=0$. Conversely, if $\wt W_{1,0} \cap \wt W_{2,0}=0$, by Lemma~\ref{lm:ff_descent}, one has 
    \[ \wt W_1\cap \wt W_2=(\wt W_{1,0} \cap \wt W_{2,0}) \otimes_{K} L=0.\]
    
    \item The first statement follows because $\wt W_{1,0} \cap \wt W_{2,0}\subseteq V_1\cap V_2=0$.
    Since $W=W_1\oplus W_2$ is defined over $K$, we have $\wt W_{1,0} \cap \wt W_{2,0}=0$. Then one can choose two $K$-subspaces $V_1\supset \wt W_{1,0}$ and $V_2\supset \wt W_{2,0}$ such that ${V_0}=V_1\oplus V_2$. One gets a decomposition $({V_0},W)=(V_1,W_1)\oplus (V_2,W_2)$ and this shows the second statement. For the last statement, we need to show that $\wt W_0=\wt W_{1,0}\oplus \wt W_{2,0}$. Since $W=W_1\oplus W_2$ is defined over $K$, $\wt W_{1,0}\cap \wt W_{2,0}=0$ and $\wt W_1\cap \wt W_2=0$. Since $\wt W\supset \wt W_i$ for $i=1,2$, one has $\wt W \supset \wt W_1\oplus \wt W_2$. On the other hand, since $\wt W_1\oplus \wt W_2$ is an $L$-subspace defined over $K$ that contains $W$, one has $\wt W \subseteq \wt W_1\oplus \wt W_2$. Therefore, one has $\wt W=\wt W_1 \oplus \wt W_2$ and $\wt W_0=\wt W_{1,0}\oplus \wt W_{2,0}$, as desired.

    \item It is clear that any $f\in \End({V_0},W)\subseteq \End({V_0})$ has the form $(f_{ij})$ with each $f_{ij}\in \Hom(V_j,V_i)$ and the map is given by the formula \eqref{eq:fv}. So it suffices to check $f_{ij}(W_j)\subseteq W_i$ for all $i,j$. But this is trivial since one has $f_{ij}(W_j)\subseteq V_{i,L}\cap W=W_i$, following from the decomposition of $(V,W)$.   

    \item Take $V_1=W_0$ and $V_2$ is a complement of $V_1$ in $\wt W_0$, so that $\wt W_0=V_1\oplus V_2$. Take $W_1=V_{1,L}$ and $W_2=V_{2,L} \cap W$. Then $(\wt W_0,W)=(V_1,W_1)\oplus (V_2,W_2)$. Take $V_3$ to be a complement of $\wt W_0$ in ${V_0}$. Then we obtain the desired decomposition in Equation~\eqref{eq:can_dec}. Equation~\eqref{eq:endo.2} then follows from Equation~\eqref{eq:endo.1}. From Equation~\eqref{eq:endo.2}, we see that the block $\End(V_1)$ has size $\dim V_1 = \dim W_0$, and $\End(V_3)$ has size $\dim V_3$, which is $n-\dim \wt W_0$. Therefore, the numbers $\dim W_0$ and $\dim \wt W_0$ can be read off from $\End({V_0},W)$.

    Let $p_1: {V_0}\to V_1$ be the projection map with respect to the decomposition ${V_0}=V_1\oplus V_2\oplus V_3$, and let
    \[ p_1:\End({V_0})=\Hom({V_0},V_1)\oplus \Hom({V_0},V_2)\oplus \Hom({V_0},V_3) \to \End({V_0},V_1) \]
    be the induced projection map. By Equation~\eqref{eq:endo.2}, we see that the restricted map $p_1:\End({V_0},W)\to \End({V_0},V_1)$ is surjective. 
    Also, restriction to $V_3$ gives a surjective map $\iota_3:\Hom({V_0},{V_0})\to \Hom(V_3,{V_0})$, and similarly, we have that the map $\iota_3: \End({V_0},W)\to \Hom(V_3,{V_0})$ is surjective. 

    Suppose $W'$ is another member such that $\End({V_0},W')=\End({V_0},W)$. Then we have $\dim V_1=\dim W_0'$ and $\dim \wt W_0=\dim \wt W_0'$.
    
    If $\dim V_1=\dim W_0'=0$, then $V_1=W_0'$. Suppose $\dim V_1=\dim W_0'>0 $ and $V_1\neq W_0'$, then $V_1\cap W_0'\subsetneq V_1$. Since any $f\in \End({V_0},W) = \End({V_0},W')$ satisfies $f(W_0')\subseteq W_0')$, the image satisfies
    \[ p_1(\End({V_0},W))\subseteq \{f\in \Hom({V_0},V_1): f(W_0')\subseteq V_1\cap W_0'\}\subsetneq \Hom({V_0},V_1), \] a contradiction.

    If $\dim \wt W_0=\dim \wt W_0'=n$, then $\wt W_0 =\wt W_0'$. 
    Suppose then that $\dim \wt W_0 = \dim \wt W_0'$ is less than $n$ and that $\wt W_0\neq \wt W_0'$. There exists a nonzero subspace $V_4 \subseteq \wt W_0'$ such that $\wt W_0+\wt W_0'=\wt W_0 \oplus V_4$. We take $V_3\subseteq {V_0}$ so that $V_3 \oplus \wt W_0={V_0}$ and $V_4 \subseteq V_3$. Choose a complementary subspace $V_5\subseteq V_3$ such that $V_3=V_4\oplus V_5$. Then the image satisfies
    \[ \iota_3 (\End({V_0},W))\subseteq \{f\in \Hom(V_3,{V_0}): f(V_4)\subseteq \wt W_0'\} \subsetneq \Hom(V_3,V), \] a contradiction.

    \item Let $\gamma\in \GL({V_0})$ such that 
    \begin{equation}
        \label{eq:conj2}
        \End({V_0},W)=\gamma \End({V_0},W') \gamma^{-1}=\End({V_0},\gamma W').
    \end{equation}
    By part (4), we have $W_0=(\gamma W')_0=\gamma W_0'$ and $\wt W_0=\gamma \wt W_0'$. Therefore $V_1=\gamma W_0'$ and $V_1\oplus V_2=\gamma \wt W_0'$. Put $W_1':=V_{1,L}$ and $W_2':=\gamma W'\cap V_{2,L}$, then we obtain a decomposition in Equation~\eqref{eq:dec2}. Since $\End({V_0},W)=\End({V_0},\gamma W')$, by Equation~\eqref{eq:endo.2}, we get $\End(V_2,W)=\End(V_2, W_2')$.   
\end{enumerate}
\end{proof}

\begin{remark}
    Note that Equation~\eqref{eq:red}  is the coordinate-free description of Equation~\eqref{eq:endo.2}.
\end{remark}

\section{General linear groups}\label{sec:GL}

From now on, we let $k_0$ be a field and $k$ an algebraically closed field containing $k_0$. Let $V_0$ be an $n$-dimensional $k_0$-vector space with $n\ge 2$, and $r$ be an integer with $0 < r< n$. Let $\Gr(V_0,r)$ be the Grassmannian of subspaces of $V_0$ of dimension $r$. Set $\Gr=\Gr(V_0,r)\otimes_{k_0} k$. As a $k$-algebraic variety, we may identity $\Gr$ with the set of $k$-points of $\Gr$.

Two $k$-subspaces $W$ and $W'$ in $\Gr$ are said to be {\it isomorphic} if there is an element $\gamma \in \GL(V_0)$ such that $W'=\gamma \cdot W$; equivalently, there is an isomorphism $(V_0,W)\simeq (V_0, W')$ of pairs of vector spaces. In this case, we have
\begin{equation}
    \label{eq:conj}
    \End(V_0,W')=\gamma \End(V_0,W) \gamma^{-1},\quad \Aut(V_0,W')=\gamma \Aut(V_0,W) \gamma^{-1}.
\end{equation}

If $P=[v_1,\dots, v_r]$ is a presenting matrix for $W$ with basis $v_1,\dots, v_r$, then $P'=\gamma P=[v_1',\dots, v_r']$ is a presenting matrix for $W'$ with basis $v_1'=\gamma v_1, \dots , v_r'=\gamma v_r$. The operation $P\mapsto \gamma P$ for $\gamma\in \GL(V_0)$ contains all permutations of rows. Thus, we get the following:

\begin{lemma}
    Any $r$-dimensional $k$-subspace $W$ is isomorphic to a $k$-subspace $W'$ whose representative matrix~$P'$ with respect to any basis has its last $r\times r$ block invertible. In other words, the last $r$ row vectors are $k$-linearly independent. 
\end{lemma}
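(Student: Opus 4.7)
The plan is simply to exploit that permutation matrices lie in $\GL(V_0)=\GL_n(k_0)$ regardless of the ground field $k_0$, combined with the basic fact that an $n\times r$ matrix of rank $r$ has $r$ linearly independent rows.

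I would first fix a $k_0$-basis $e_1,\dots,e_n$ of $V_0$ and any $k$-basis $v_1,\dots,v_r$ of $W$, and form $P=[v_1,\dots,v_r]\in \Mat_{n\times r}(k)$ as in \eqref{eq:P}. Because $v_1,\dots,v_r$ are $k$-linearly independent, $P$ has rank $r$, so there exist indices $1\le i_1<i_2<\dots<i_r\le n$ such that the rows $P_{i_1},\dots,P_{i_r}$ are $k$-linearly independent.

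Next, I would choose any permutation $\sigma\in S_n$ sending $\{i_1,\dots,i_r\}$ to $\{n-r+1,\dots,n\}$ (preserving order, say), and let $\gamma\in \GL_n(k_0)=\GL(V_0)$ be the corresponding permutation matrix, whose entries all lie in $\{0,1\}\subseteq k_0$. Setting $W':=\gamma\cdot W$, the representative matrix of $W'$ relative to the basis $\gamma v_1,\dots,\gamma v_r$ is $P':=\gamma P$, which is obtained from $P$ by permuting its rows according to $\sigma$. By construction, the last $r$ rows of $P'$ are precisely $P_{i_1},\dots,P_{i_r}$, so the bottom $r\times r$ block of $P'$ is invertible. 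Finally, any other choice of $k$-basis of $W'$ yields a representative matrix of the form $P'Q$ with $Q\in \GL_r(k)$; right multiplication by $Q$ does not affect the linear independence of the bottom $r$ rows, so the conclusion holds with respect to every basis of $W'$.

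There is no real obstacle: the only point to check is that the needed change-of-coordinates on $V_0$ can be carried out by an element of $\GL(V_0)$ rather than merely of $\GL(V_0\otimes_{k_0} k)$, and this is automatic for permutation matrices since their entries lie in the prime field.
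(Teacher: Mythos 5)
Your proof is correct and follows exactly the route the paper has in mind: the paper merely remarks, immediately before the lemma, that the operation $P\mapsto \gamma P$ for $\gamma\in\GL(V_0)$ includes all row permutations, and leaves the rest implicit. You spell out the two implicit steps cleanly — that $\mathrm{rank}\,P=r$ guarantees the existence of $r$ linearly independent rows, and that passing to another basis of $W'$ replaces $P'$ by $P'Q$ with $Q\in\GL_r(k)$, which preserves the invertibility of the bottom block. Same approach, just made explicit.
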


After a suitable translation by $\GL(V_0)$, we may represent $W$ by 
$\begin{bmatrix}
    T \\
    \bbI_r
\end{bmatrix}$, where $T=(t_{ij})\in \Mat_{n-r, r}(k)$, for a suitable basis $\{v_i\}$; see Equation~\eqref{eq:P}.
For any element \[ \alpha=\begin{bmatrix}
    A & B \\
    C & D 
\end{bmatrix} \in 
\begin{bmatrix}
    \Mat_{n-r,n-r}(k_0) & \Mat_{n-r,r}(k_0)  \\
    \Mat_{r,n-r}(k_0) & \Mat_{r,r}(k_0)
\end{bmatrix}
=\End(V_0), \]
one has 
\[ \alpha(W)\subseteq W \iff \alpha P = \begin{bmatrix}
    AT+B \\
    CT+D 
\end{bmatrix} =\begin{bmatrix}
    T \\
    \bbI_r 
\end{bmatrix}\cdot (CT+D). \]
The right hand side gives the equation
\begin{equation}\label{eq:def_eq}
   TCT+TD-AT-B=0\ \text{in } \Mat_{n-r,r}(k). 
\end{equation}
Thus, we get 
\begin{equation}
   \End(V_0,W)= \left \{\begin{bmatrix}
    A & B \\
    C & D 
\end{bmatrix}\in \Mat_n(k_0): TCT+TD-AT-B=0 \right \}.  
\end{equation}

Conversely, let $E\subseteq \End(V_0)$ be a $k_0$-subalgebra of $\End(V_0)$. Suppose $E=k_0[\alpha_1,\dots \alpha_s]$ is generated by some elements $\alpha_\mu \in \End(V_0)$ and write 
\[ \alpha_\mu=\begin{bmatrix}
    A_\mu & B_\mu \\
    C_\mu  & D_\mu 
\end{bmatrix}, \quad \mu=1,\dots, s. \]
Let $U$ be the open affine subset
\begin{equation}
   U:=\left \{\big\< \begin{bmatrix}
    T \\
    \bbI_r 
\end{bmatrix}\big\> : T\in \Mat_{n-r,r}(k) \right \}\subseteq \Gr,
\end{equation}
and let
\begin{equation}
   U_E:=\left \{W\in U 
 : E\subseteq \End(V_0,W)  \right \}.
\end{equation}
Clearly $U\subseteq \Gr$ is an open subvariety over $k$ that is defined over $k_0$. Then $U_E\subseteq U$ is the closed subvariety over~$k_0$ defined by the matrix equations
\begin{equation}
    \label{eq:defeqE}
\end{equation}
\[ TC_\mu T+TD_\mu-A_\mu T-B_\mu=0, \quad \mu=1,\dots, s. \]
We define 
\[ \Gr_E:=\{W\in \Gr: E\subseteq \End(V_0,W)\}, \]
which is a closed subvariety of $\Gr$. Indeed, 
we cover $\Gr$ by open affine subsets $\gamma U$ with $\gamma\in \GL(V_0)$. For each open affine subset $\gamma U$, the intersection $(\gamma U)\cap \Gr_E=(\gamma U)_E\simeq U_{\gamma E \gamma^{-1}}$ which is a closed subvariety shown as above. Therefore, $\Gr_E\subseteq \Gr$ is a closed subvariety. From \eqref{eq:defeqE}, we see that $\Gr_E$ is defined over $k_0$.

Clearly, for any two $k_0$-subalgebras $E, E'\subseteq \End(V_0)$, we have   
\[ E\subseteq E' \iff \Gr_{E} \supseteq \Gr_{E'}. \]
Therefore, we obtain a stratification on $\Gr=\Gr(V_0,r)$ by 
\[ \Gr=\bigcup_{E} \Gr_E  \]
by closed subvarieties $\Gr_E$, where $E$ runs through all $k_0$-subalgebras of $\End(V_0)$. 

Denote by 
\begin{equation}
  \calE:=\{ \End(V_0,W): W\in \Gr \}   
\end{equation}
the set of all $k_0$-algebras of $\End(V_0)$ that occurs as the relative endomorphism algebra of $W$ for some $W\in \Gr$.

For each $E\in \calE$, set 
\begin{equation}
    (\Gr_E)^0:=\{W\in \Gr: \End(V_0,W)=E \}.    
\end{equation}
Then we have 
\begin{equation}
    \Gr=\coprod_{E\in \calE} (\Gr_E)^0, \quad \Gr_E= \coprod_{E'\in \calE: \atop E\subseteq E'} (\Gr_{E'})^0.
\end{equation}
By definition, $(\Gr_E)^0$ is always non-empty.

We shall call any member $E\in \calE$ a relative $k_0$-subalgebra of $\End(V_0)$ (of type $(r,n~-~r)$, if necessary).
When the set $\calE$ is finite, which is the case exactly when $k_0$ is finite by Lemma~\ref{lm:end_basic}.(1), $(\Gr_E)^0$ is a locally closed subset and can be viewed as a reduced subscheme over $k_0$. However, when $k_0$ is an infinite field, the set $(\Gr_E)^0$ is not even constructible, as it is the (non-empty) complement of an infinite union of proper closed subsets in the closed subvariety $\Gr_E$.    
Nevertheless, it still makes sense to ask whether  $(\Gr_E)^0$ is Zariski-dense in $\Gr_E$ and whether 
the strata $\{(\Gr_E)^0\}_{E\in \calE}$ satisfy the stratification property: that is, whether the closure of each stratum is a union of some strata. 
We expect that both questions have affirmative answers.\\

The group $\GL(V_0)$ acts on $\Gr$ on the left and permutes the strata $\Gr_E$. Namely, for each element $\gamma\in \GL(V_0)$ the action gives an isomorphism 
\[ \gamma: \Gr_E \isoto \Gr_{\gamma E \gamma^{-1}}, \quad W\mapsto \gamma W. \]
It is helpful to introduce another intermediate stratification which takes the action of $\GL(V_0)$ into consideration. 
Denote by $[\calE]$ the set of conjugacy classes of all $k_0$-subalgebras $E$ in $\calE$ and by $[E]\subseteq \calE$ the conjugacy class of~$E$ in $\calE$.  For each $[E]$ in $[\calE]$, set 
\begin{equation}
    \Gr_{[E]}:=\bigcup_{E\in [E]} \Gr_E, \quad (\Gr_{[E]})^0:=\bigcup_{E\in [E]} (\Gr_E)^0.
\end{equation}
Then we have a stratification
\begin{equation}\label{eq:st_G_[E]}
    \Gr=\bigcup_{[E]\in [\calE]} \Gr_{[E]}, \quad  \Gr=\coprod_{[E]\in [\calE]} (\Gr_{[E]})^0
\end{equation}
by the strata $\Gr_{[E]}$ or the strata $(\Gr_{[E]})^0$. 
When $k_0$ is finite, each $\Gr_{[E]}$ (resp.~$(\Gr_{[E]})^0$) is a closed (resp.~locally closed) reduced subscheme of finite type over $k_0$, and we also expect that the strata $\{(\Gr_[E])^0\}_{[E]\in [\calE]}$ satisfy the stratification property.

\begin{example}
    Let $k_0=\Fq$, $(n,r)=(2,1)$, in which case $\Gr\simeq\bbP^1$ is defined over $\Fq$. Let $W_t=\<t_1 e_1+t_2 e_2\>\subseteq V_{0,k}$ denote the subspace corresponding to the parameter $t=[t_1:t_2]\in \bbP^1(k)$. Let $B_2(\Fq)\subseteq \Mat_2(\Fq)$ be the Borel subalgebra consisting of upper-triangular matrices, which stabilises the subspace $W_{[1:0]}$. One has $\Gr_{B_2(\Fq)}=\{[1:0]\}$ and $\Gr_{[B_2(\Fq)]}=\bbP^1(\Fq)$, since we know that all Borel subalgebras are $\GL_2(\Fq)$-conjugate. We fix an embedding $\F_{q^2}\hookrightarrow \Mat_2(\Fq)$ of the quadratic extension of $\F_{q^2}$ of $\Fq$. For $t\in \bbP^1(k)\setminus \bbP^1(\Fq)=k-\Fq$, we have 
    \begin{equation}
        \label{eq:EndofGr(2,1)}
        \End(V_0,W_t)=\begin{cases}
            \Fq + \Fq\cdot \begin{bmatrix}
                0 & 1 \\
                \beta & \alpha 
            \end{bmatrix} 
            & \text{if $t\in \F_{q^2}\setminus \Fq$}; \\
            \Fq &  \text{if $t\in k\setminus \F_{q^2}$},
        \end{cases}        
    \end{equation}
    where $X^2-\alpha X-\beta$ is the minimal polynomial of $t$ over $\Fq$; see \cite[Section 3.2]{yuyu}. Thus, we have $(\Gr_{\Fq})^0=(\Gr_{[\Fq]})^0=\bbP^1(k)\setminus \bbP^1(\F_{q^2})$,  $(\Gr_{E})^0=\{t,t^q\}$, if $E=\End(V_0,W_t)$ and $t\in \F_{q^2}\setminus \Fq$, and $(\Gr_{[E]})^0=\bbP^1(\F_{q^2})\setminus \bbP^1(\F_{q})$. In particular, we have $[\calE]=\{[B_2(\Fq)],[\F_{q^2}], [\Fq]\}$ and 
    \begin{equation}
        \label{eq:strat(2,1)}
         \Gr=\coprod_{[E]\in [\calE]} (\Gr_{[E]})^0=\bbP^1(\F_{q}) \coprod \left ( \bbP^1(\F_{q^2})\setminus \bbP^1(\F_{q}) \right ) \coprod (\bbP^1\setminus \bbP^1(\F_{q^2}). 
    \end{equation}
    This shows that the collection of strata $\{(\Gr_{[E]})^0\}_{[E]\in [\calE]}$ (resp.~$\{(\Gr_{E})^0\}_{E\in \calE}$) satisfies the stratification property in this case. 
\end{example}

\begin{remark}
    If $k_0$ is infinite, then $\Gr_{[E]}$ is a union of possibly infinitely many closed subvarieties $\Gr_E$ of $\Gr$, so in many situations $\Gr_{[E]}$ is not a subscheme of finite type or even a constructible subset. 
We have already seen (in Lemma~\ref{lm:end_basic}.(1)) that when $E_0$ is a maximal parabolic $k_0$-algebra of type $(r,n-r)$, we have $\Gr_{E_0}=\{\mathrm{pt}\}$ and $\Gr_{[E_0]}=\Gr(V_0,r)(k_0)$. 
Thus, an expected minimal stratum $\Gr_{[E_0]}=\Gr(V_0,r)(k_0)$ is Zariski dense in $\Gr$, as $\Gr$ is a rational variety. This shows that taking the Zariski closure as our inclusion (or closure) relation is not a good notion to define the partial order on these strata.
\end{remark}

For any two members $[E], [E']\in [\calE]$, we define a partial ordering
\[ [E]\subseteq [E'] \] if there exist elements $E_1\in [E]$ and $E_2\in [E']$ satisfying $E_1\subseteq E_2$. It follows that for any $E_1\in [E]$ (resp.~$E_2\in [E']$) there is $E_2\in [E']$
(resp.~$E_1\in [E]$) such that $E_1\subseteq E_2$.
Clearly, we have 
\[ [E]\subseteq [E'] \iff \Gr_{[E']}\subseteq \Gr_{[E]}, \]
\[ \Gr_{[E]}=\coprod_{[E]\subseteq [E']} (\Gr_{[E']})^0. \]

Using this partial ordering, we have the notion of (non-empty) maximal strata and minimal strata. 
One might expect that every minimal stratum is zero-dimensional. This is true when~$k_0$ is finite. However, when $k_0=k$, we see that $\Gr$ itself is a stratum, which is of dimension $r(n-r)$.
Clearly, the stratum $(\Gr_{[k_0]})^0$ is the unique maximal stratum if it is non-empty.\\

We deduce a criterion for the existence of $W\in \Gr$ whose endomorphism algebra equals~$k_0$, that is, a criterion for deciding whether the stratum $(\Gr_{[k_0]})^0$ is non-empty. Recall that $r = \dim(W)$. Let $k_0[\ul X]$ be the polynomial ring over $k_0$ with variables $X_{ij}$ for $1\le i\le n-r$ and $1\le j \le r$ and let $k_0[\ul X]_{\le d}$ be the $k_0$-vector subspace of polynomials of total degree $\le d$. Set \[ N_d:=\dim_{k_0} k_0[\ul X]_{\le d}. \] 
For any $T=(t_{ij})\in \Mat_{n-r\times r}(k)$, let ${\rm ev}_T:k_0[\ul X]\to k$ be the evaluation map at $T$. 
Set 
\begin{equation}
    \bbV(T):={\rm ev}_T(k_0[\ul X]_{\le 2}), \quad d(T):=\dim_{k_0} \bbV(T).
\end{equation}
For each $1\le i\le n-r$ and $1\le j\le r$, let $E_{ij}$ be the matrix with $(i,j)$-entry $1$ and other entries $0$.

\begin{proposition}\label{prop:End=k_0}
   Let $T\in \Mat_{n-r\times r}(k)$ and $W_T=\big\<\begin{bmatrix}
        T \\ \bbI_r
    \end{bmatrix}\big\>$ be the corresponding $k$-subspace in~$\Gr$. 
    If $d(T)=N_2$, then $\End(V_0,W_T)=k_0$. 
\end{proposition}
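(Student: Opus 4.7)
The plan is to recast the defining matrix equation \eqref{eq:def_eq} as a polynomial identity in $k_0[\ul X]$ and then extract the constraints on $A, B, C, D$ by comparing coefficients. The hypothesis $d(T) = N_2$ is tailor-made for this strategy: since $\bbV(T) = {\rm ev}_T(k_0[\ul X]_{\le 2})$ has $k_0$-dimension $N_2 = \dim_{k_0} k_0[\ul X]_{\le 2}$, the evaluation map ${\rm ev}_T : k_0[\ul X]_{\le 2} \to k$ is $k_0$-linearly injective.

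For an arbitrary $\alpha = \begin{bmatrix} A & B \\ C & D \end{bmatrix} \in \End(V_0, W_T)$, I would introduce the matrix of indeterminates $X = (X_{ij}) \in \Mat_{n-r, r}(k_0[\ul X])$ and form
$$F(X) := X C X + X D - A X - B \in \Mat_{n-r, r}(k_0[\ul X]).$$
Since $XCX$ is quadratic, $XD$ and $AX$ are linear, and $B$ is constant in the entries of $X$, each entry $F_{ij}$ lies in $k_0[\ul X]_{\le 2}$. By Equation~\eqref{eq:def_eq}, the condition $\alpha \in \End(V_0, W_T)$ is equivalent to ${\rm ev}_T(F_{ij}) = 0$ for every $(i,j)$, which by the injectivity of ${\rm ev}_T$ upgrades to the polynomial identity
$$X C X + X D - A X - B = 0 \quad \text{in } \Mat_{n-r, r}(k_0[\ul X]).$$

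Finally, I would extract $A, B, C, D$ by separating each entry $F_{ij}$ by total degree. The constant part gives $B_{ij} = 0$, so $B = 0$. The quadratic part of $F_{ij}$ is $\sum_{a=1}^{r}\sum_{b=1}^{n-r} C_{ab} X_{ia} X_{bj}$; a short index check shows that the monomials $X_{ia} X_{bj}$ are pairwise distinct as $(a, b)$ varies (the only potential collision $(i,a)=(b,j)$ yields the monomial $X_{ij}^2$ for the unique pair $(a,b)=(j,i)$), so $C = 0$. The linear part of $F_{ij}$ is $\sum_{k=1}^{r} D_{kj} X_{ik} - \sum_{k=1}^{n-r} A_{ik} X_{kj}$, and these two sums share exactly the monomial $X_{ij}$ with combined coefficient $D_{jj} - A_{ii}$; vanishing of each coefficient therefore forces $D_{kj} = 0$ for $k \ne j$, $A_{ik} = 0$ for $k \ne i$, and $D_{jj} = A_{ii}$ for all $(i, j)$. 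Letting $(i, j)$ vary then yields $A = \lambda I_{n-r}$ and $D = \lambda I_r$ for a common $\lambda \in k_0$, so $\alpha = \lambda I_n$ and $\End(V_0, W_T) = k_0$. There is no conceptual obstacle in this argument, as the invariant $d(T)$ was introduced precisely to measure such injectivity; the only piece requiring attention is the monomial bookkeeping in the last step, which is a direct combinatorial verification using the index ranges of $X, A, C, D$.
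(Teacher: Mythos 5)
Your argument is correct and follows essentially the same strategy as the paper: use $d(T)=N_2$ to promote the matrix identity $F(T)=0$ to the polynomial identity $F(X)=0$ in $\Mat_{n-r,r}(k_0[\ul X]_{\le 2})$, and then compare coefficients by degree to force $B=0$, $C=0$, and $A,D$ scalar with a common scalar. The only (cosmetic) difference is that you compare monomial coefficients within each entry $F_{ij}$ separately, which lets you avoid the symmetrized expressions $E_{i_1 j_1} C E_{i_2 j_2} + E_{i_2 j_2} C E_{i_1 j_1}$ that the paper manipulates after collecting matrix-valued coefficients of each monomial $t_{i_1 j_1} t_{i_2 j_2}$.
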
 
\begin{proof} 
    We write $T=\sum_{i,j} E_{ij} t_{ij}$. Using Equation \eqref{eq:def_eq}, we get 
\begin{equation} \label{eq:def-eq2}
   \sum_{\substack{1\le i_1,i_2 \le n-r \\ 1\le j_1,j_2 \le r}} (E_{i_1 j_1} C E_{i_2j_2}) t_{i_1 j_1} t_{i_2 j_2} +\sum_{\substack{1\le i \le n-r \\ 1\le j\le r}}  (E_{ij}D-A E_{ij})t_{ij} -B =0. 
\end{equation}

Since $N_2=d(T)$, the vectors $1, t_{ij}, t_{i_1 j_1} t_{i_2j_2}$ for all $i,j,i_1,j_1,i_2,j_2$ are $k_0$-linearly independent. So 
\[ E_{i_1 j_1} C E_{i_2j_2}+E_{i_2j_2} C E_{i_1 j_1}=0 \ 
 (\text{for }\  (i_1,j_1)\neq (i_2,j_2)), \quad E_{ij}D-A E_{ij}=0, \quad B=0.\]

One has $E_{i_1 j_1} C E_{i_2j_2}+E_{i_2j_2} C E_{i_1 j_1}=c_{j_1 i_2} E_{i_1 j_2}+c_{j_2 i_1} E_{i_2 j_1}=0$. So $c_{j_1 i_2}=0$ for $1\le j_1 \le r$, $1\le i_2\le n-r$ and $C=0$. Also from 
\[ E_{ij}D=\sum_{\substack{1\le i_1,j_1  \le r}} E_{ij} d_{i_1 j_1} E_{i_1,j_1}=\sum_{1\le j_1\le r} d_{j, j_1} E_{i,j_1}=d_{jj}E_{ij}+\sum_{1\le j_1\le r \atop j_1\neq j} d_{j, j_1} E_{i,j_1} \]
\[ A E_{ij}=\sum_{1\le i_1,j_1\le n-r} a_{i_1 j_1} E_{i_1 j_1} E_{ij}=\sum_{1\le i_1\le n-r} a_{i_1 i}
E_{i_1 j}=a_{ii}E_{ij}+\sum_{1\le i_1\le n-r\atop 
i_1\neq i} a_{i_1 i} E_{i_1 j} , \]
one concludes that $a_{ij}=d_{ij}=0$ for $i\neq j$ and $d_{jj}=a_{ii}$ for $1\le i\le n-r$, $1\le j\le r$. 
It follows that $\begin{bmatrix}
    A & B \\
    C & D
\end{bmatrix}=a\bbI_n$ for some $a\in k_0$. This proves the proposition.
\end{proof}
    
Recall that $\Gr^{\rm nd}\subseteq \Gr$ is the subset consisting of $k$-subspaces $W$ which are $k_0$-null and $k_0$-dense.

\begin{theorem}\label{thm:nonempty}
    If the field extension $k/k_0$ is infinite, then $(\Gr_{[k_0]})^0\subseteq \Gr^{\rm nd}$ and both are non-empty. 
\end{theorem}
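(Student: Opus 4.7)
The plan is to combine Proposition~\ref{prop:End=k_0} with Theorem~\ref{thm:evT} to produce a point of $(\Gr_{[k_0]})^0$, and then use the canonical block form of Proposition~\ref{prop:dec}(4) to check that every point of $(\Gr_{[k_0]})^0$ automatically lies in $\Gr^{\rm nd}$.

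\textbf{Non-emptiness of $(\Gr_{[k_0]})^0$.} First I would use that $k$ is algebraically closed and $k/k_0$ is infinite to deduce, via Theorem~\ref{thm:evT}(3), that the extension $k/k_0$ is strictly infinite (it is certainly not exceptional, since $k=\bar k$ and $[k:k_0]=\infty$). Then, applying Theorem~\ref{thm:evT}(1) with $n$ replaced by the number of variables $(n-r)r$ and $d=2$, I obtain an element $T=(t_{ij})\in\Mat_{n-r,r}(k)$ for which the evaluation map ${\rm ev}_T\colon k_0[\underline X]_{\le 2}\to k$ is injective; equivalently $d(T)=N_2$. Proposition~\ref{prop:End=k_0} then gives $\End(V_0,W_T)=k_0$, so $W_T\in(\Gr_{k_0})^0\subseteq(\Gr_{[k_0]})^0$.

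\textbf{The inclusion $(\Gr_{[k_0]})^0\subseteq \Gr^{\rm nd}$.} Let $W\in(\Gr_{[k_0]})^0$, so that $\End(V_0,W)$ is $\GL(V_0)$-conjugate to the scalar subalgebra $k_0\cdot \bbI_n$. Since scalar matrices are central, conjugation fixes $k_0\cdot \bbI_n$, hence $\End(V_0,W)=k_0\cdot \bbI_n$. Next I would apply Proposition~\ref{prop:dec}(4) to obtain the canonical decomposition
\[
(V_0,W)=(V_1,W_1)\oplus (V_2,W_2)\oplus (V_3,0),\qquad V_1=W_0,\ V_1\oplus V_2=\wt W_0,
\]
together with the block-upper-triangular description
\[
\End(V_0,W)=\begin{bmatrix}\End(V_1)&\Hom(V_2,V_1)&\Hom(V_3,V_1)\\0&\End(V_2,W_2)&\Hom(V_3,V_2)\\0&0&\End(V_3)\end{bmatrix}.
\]
If $V_1\neq 0$, then the endomorphism acting as $\bbI_{V_1}$ on $V_1$ and as $0$ on $V_2\oplus V_3$ lies in this block algebra; it is not a scalar multiple of $\bbI_n$, contradicting $\End(V_0,W)=k_0\cdot\bbI_n$. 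Similarly the idempotent $0\oplus 0\oplus\bbI_{V_3}$ rules out $V_3\neq 0$. Hence $W_0=V_1=0$ and $\wt W_0=V_1\oplus V_2=V_0$, i.e.\ $W$ is both $k_0$-null and $k_0$-dense, so $W\in\Gr^{\rm nd}$.

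\textbf{Anticipated difficulty.} The only real subtlety is the passage from ``$k/k_0$ infinite'' to ``$k/k_0$ strictly infinite'', which is exactly where the algebraic closedness of $k$ is used, via Theorem~\ref{thm:evT}(3); without this hypothesis on $k$ the statement would require $k/k_0$ to be strictly infinite, and finite-but-purely-inseparable situations would have to be excluded by hand. Everything else is a direct application of the already-established Propositions~\ref{prop:End=k_0} and~\ref{prop:dec}(4).
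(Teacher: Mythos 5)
Your proof is correct and takes essentially the same route as the paper's: apply Theorem~\ref{thm:evT} (using part (3) to pass from ``infinite'' to ``strictly infinite'' since $k=\bar{k}$, and then part (1) with $(n-r)r$ variables and $d=2$) to get a $T$ with $d(T)=N_2$, then invoke Proposition~\ref{prop:End=k_0} to conclude $\End(V_0,W_T)=k_0$. The only real difference is cosmetic: where the paper dismisses the inclusion $(\Gr_{[k_0]})^0\subseteq \Gr^{\rm nd}$ in one line (``if $W\notin\Gr^{\rm nd}$ then $\End(V_0,W)\supsetneq k_0$, contradiction''), you unpack it explicitly via the canonical block decomposition of Proposition~\ref{prop:dec}(4) and exhibit the nontrivial idempotents $\bbI_{V_1}\oplus 0\oplus 0$ and $0\oplus 0\oplus\bbI_{V_3}$ when $W_0\neq 0$ or $\wt W_0\neq V_0$. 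You also correctly note the small point that $k_0\cdot\bbI_n$ is central, so $(\Gr_{[k_0]})^0=(\Gr_{k_0})^0$, which the paper leaves implicit. Both of these elaborations make the same argument more self-contained without changing its substance.
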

\begin{proof}
    If $W\in \Gr \setminus \Gr^{\rm nd}$, then $\End(V_0,W)$ is of the form in \eqref{eq:endo.2} with either $\End(V_1)\neq 0$ or $\End(V_3)\neq 0$, which cannot be $k_0$. Therefore, $(\Gr_{[k_0]})^0\subseteq \Gr^{\rm nd}$.
    
    By Theorem~\ref{thm:evT}, there exists an element $T=(t_{ij})\in \Mat_{n-r,r}(k)$ such that the map ${\rm ev}_T:k_0[\ul X]_{d\le 2}\to k$ is injective. It follows that $N_2=d(T)$.  By Proposition~\ref{prop:End=k_0}, we have $\End(V_0,W_T)=k_0$. This shows that $(\Gr_{[k_0]})^0$ is non-empty.      
\end{proof}

\begin{proposition}\label{prop:endo} 
    Let $\{t_1'=1, t_2',\dots, t_s'\}\subseteq \{1,t_{ij}, t_{i_1j_1} t_{i_2 j_2} \text{ such that } 1\le i,i_1,i_2 \le n~-~r \text{ and } 1\le j,j_1,j_2 \le r\}$ be a maximal $k_0$-linearly independent subset. Write 
\begin{equation}\label{eq:lin-comb}
    t_{ij}=\sum_{\mu=1}^s \alpha_{ij}^\mu t_\mu', \quad t_{i_1j_1} t_{i_2 j_2}=\sum_{\mu=1}^s \alpha_{i_1j_1 i_2 j_2}^\mu t_\mu', \quad \text{with} \ \alpha_{ij}^\mu,\  \alpha_{i_1j_1 i_2 j_2}^\mu\in k_0. 
\end{equation}
Then $\End(V_0,W)$ consists of all matrices 
$   \begin{bmatrix}
    A & B \\
    C & D
\end{bmatrix}\in \Mat_n(k_0)$ satisfying
\begin{equation}\label{eq:endo2}
   \sum_{\substack{1\le i_1,i_2 \le n-r \\ 1\le j_1,j_2 \le r}} (E_{i_1 j_1} C E_{i_2j_2}) \alpha_{i_1j_1 i_2 j_2}^\mu \\  +\sum_{\substack{1\le i \le n-r \\ 1\le j\le r}}  (E_{ij}D-A E_{ij})\alpha_{ij}^\mu -B \delta_{1,\mu}=0, 
\end{equation}
for $\mu=1,\dots, s$, where $\delta_{1,\mu}$ is the Kronecker delta.

\end{proposition}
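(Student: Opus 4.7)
The plan is a direct bookkeeping computation starting from the matrix equation \eqref{eq:def-eq2} that defines $\End(V_0,W)$ in the chosen coordinates. First I would substitute the expansions \eqref{eq:lin-comb} into \eqref{eq:def-eq2}, writing the constant term as $-B = -B \cdot t_1'$ (legitimate since $t_1' = 1$) so that all three summands appear uniformly as $k$-linear combinations of the fixed set $\{t_1', \ldots, t_s'\}$.

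Next I would collect the coefficients of each $t_\mu'$. The coefficient of $t_\mu'$ is precisely the left-hand side of \eqref{eq:endo2}: the term $B\delta_{1,\mu}$ arises because $-B$ contributes only to the coefficient of $t_1'$, while the monomial expansions contribute to every $\mu$ through the scalars $\alpha^\mu_{ij}$ and $\alpha^\mu_{i_1j_1 i_2 j_2}$. Since the matrices $E_{i_1j_1}CE_{i_2j_2}$, $E_{ij}D - AE_{ij}$ and $B$ all have entries in $k_0$ (as $A,B,C,D$ are taken in the corresponding $k_0$-matrix spaces), while $\{t_1', \ldots, t_s'\}$ is $k_0$-linearly independent in $k$ by construction, I can apply $k_0$-linear independence entrywise in $\Mat_{n-r,r}(k)$. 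This shows that the single equation \eqref{eq:def-eq2} holds if and only if each of the $s$ equations \eqref{eq:endo2} holds, which is the content of the proposition.

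There is no conceptual obstacle here; the statement is simply an explicit rewriting of the defining equation in coordinates adapted to a $k_0$-basis of the $k_0$-span of $\{1\} \cup \{t_{ij}\} \cup \{t_{i_1j_1}t_{i_2j_2}\}$. The only point deserving care is the Kronecker delta in \eqref{eq:endo2}: it is needed because $-B$ sits naturally at the $t_1'$ slot, while the coefficients $\alpha^1_{ij}$ and $\alpha^1_{i_1j_1 i_2 j_2}$ can themselves be nonzero, so the $-B$ contribution must be recorded separately in the $\mu = 1$ equation to avoid miscounting. Once that is done, equating each of the $s$ coefficient matrices to zero immediately yields \eqref{eq:endo2} for $\mu = 1, \dots, s$, completing the proof.
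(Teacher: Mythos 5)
Your argument is correct and coincides with the paper's: both substitute the expansions \eqref{eq:lin-comb} into \eqref{eq:def-eq2}, collect the coefficient of each $t_\mu'$ (recording $-B$ in the $\mu=1$ slot via the Kronecker delta), and then invoke $k_0$-linear independence of $\{t_1',\dots,t_s'\}$ entrywise to split the single matrix equation into the $s$ equations \eqref{eq:endo2}. There is no substantive difference in approach.
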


\begin{proof}
By \eqref{eq:def-eq2} and \eqref{eq:lin-comb}, we get 
\[ \sum_{\mu=1}^s \left [ \sum_{\substack{1\le i_1,i_2 \le n-r \\ 1\le j_1,j_2 \le r}} (E_{i_1 j_1} C E_{i_2j_2}) \alpha_{i_1j_1 i_2 j_2}^\mu +\sum_{\substack{1\le i \le n-r \\ 1\le j\le r}}  (E_{ij}D-A E_{ij})\alpha_{ij}^\mu -B \delta_{1,\mu} \right ] t_\mu' =0. \]
As the $t_\mu'$s are linearly $k_0$-linearly independent, we get \eqref{eq:endo2}. 
\end{proof}
 
\begin{remark}
    Recall from Definition~\ref{def:exceptional} that an \emph{exceptional} field extension $k/k_0$ satisfies either $k=k_0$ or $k_0$ is real closed (and $k = \overline{k}_0$).
    When $k=k_0$, Lemma~\ref{lm:end_basic}.(1) gives a complete description of the elements of $\Gr = \Gr(V_0,r)(k_0)$ in terms of parabolic subalgebras of $\mathrm{End}(V_0)$ of type $(r,n-r)$ for $0<r<n$. Thus, we have a complete understanding of $\mathrm{End}(V_0,W)$ for all~$W$ when $k=k_0$.
    We shall determine all possible relative endomorphism algebras of $W$ for the case $k_0=\R$ and $k=\C$ in Section~\ref{sec:end_R}.
\end{remark}

\section{Symplectic groups}\label{sec:symp}

Let $(V_0,\psi_0)$ be a non-degenerate symplectic $k_0$-space of dimension $n=2r$. Let ${\rm L}(V_0)={\rm L}(V_0,\psi_0)\subseteq \Gr(V_0,r)$ 
denote the Lagrangian variety associated to $(V_0,\psi_0)$, which parametrises maximal isotropic subspaces of $(V_0,\psi_0)$. It is a smooth irreducible projective scheme over $k_0$ of dimension $r(r+1)/2$. We set ${\mathrm L}:={\rm L}(V_0)\otimes_{k_0} k$, and identify ${\rm L}$ with the set of $k$-rational points of ${\rm L}$. 
Let 
\[\Sp(V_0):=\{\alpha\in \GL(V_0): \psi_0(\alpha x, \alpha y)=\psi_0(x,y) \ \forall\, x,y \in V_0\} \] 
denote the symplectic group associated to $(V_0,\psi_0)$, viewed as an abstract group. For any $k$-subspace $W\in \mathrm{L}$, one defines the endomorphism algebra $\End(V_0,W)$ of $W$ as before, and the automorphism group of $W$ with respect to $(V_0,\psi_0)$ is defined as
\begin{equation}\label{eq:SpW}
    \Sp(V_0,W):=\Sp(V_0)\cap \End(V_0,W)^\times. 
\end{equation}

Choose a symplectic basis $e_1,\dots, e_r$, $e'_1, \dots, e'_r$ of $(V_0,\psi_0)$ and view $V_0=\Mat_{2r \times 1}(k_0)$. Then the pairing $\psi_0$ is represented by the matrix
\[ \psi_0\sim \bbJ_{2r}=\begin{bmatrix}
    0 & \bbI_r \\
   -\bbI_r & 0 \\    
\end{bmatrix}.\]
Choose a $k$-basis $v_1,\dots, v_r$ of $W$. Write
\[ P=[v_1,\dots, v_r]=\begin{bmatrix}
    u_1 \\
    \vdots \\
    u_r \\
    u_1' \\
    \vdots \\
    u_r' \\
\end{bmatrix}=\begin{bmatrix}
    U \\ U' 
\end{bmatrix}, \quad \text{with } U=(u_{ij}),\ U'=(u'_{ij}) \in \Mat_r(k), \]
for the representative matrix of~$W$ with respect to $\{v_i\}$, where $u_i$ and $u_i'$ are row vectors.
For each $1\le i \le r$, define $\varepsilon_i\in \Sp(V_0)$ by 
\[ \varepsilon_i(e_i)=e'_i, \quad \varepsilon_i(e'_i)=-e_i, \quad \text{and} \quad \varepsilon_i(e_j)=e_j, \quad \varepsilon_i(e'_j)=e'_j, \text{for all $j\neq i$}. \]
Let $S_r$ be the symmetric group of $\{1,\dots, r\}$.
One defines an action of $S_r$ on $V_0$ by 
\[ \sigma (e_i)=e_{\sigma(i)}, \quad \sigma (e'_i)=e'_{\sigma(i)}, \]
and one has $S_r\subseteq\Sp(V_0)$. One verifies that $\sigma \cdot \varepsilon_i \cdot \sigma^{-1}=\varepsilon_{\sigma(i)}$. Let 
\[ H:=\<\varepsilon_1,\dots, \varepsilon_r\> \cdot S_r \] 
be the subgroup generated by all $\varepsilon_i$ and $S_r$. Then $H$ can be identified with the Weyl group of~$\Sp(V_0)$, namely $H\isoto N_{\Sp(V_0)}(T_0)/T_0$ for the diagonal maximal torus $T_0\simeq (k_0^\times)^r$. 

Two $k$-subspaces $W$ and $W'$ in $\mathrm{L}$ are said to be \emph{isomorphic} if there is an element $\gamma \in \Sp(V_0)$ such that $W'=\gamma \cdot W$; equivalently, there is an isomorphism $(V_0, \psi_0, W) \simeq (V_0, \psi_0, W')$ of pairs of vector spaces preserving $\psi_0$. In this case, we have
\begin{equation}
    \label{eq:conj_sp}
    \End(V_0,W')=\gamma \End(V_0,W) \gamma^{-1},\quad \Sp(V_0,W')=\gamma \Sp(V_0,W) \gamma^{-1}.
\end{equation}

If $P=[v_1,\dots, v_r]$ is a presenting matrix for $W$ with basis $v_1,\dots, v_r$, then $P'=\gamma P=[v_1',\dots, v_r']$ is a presenting matrix for $W'$ with basis $v_1'=\gamma v_1, \dots , v_r'=\gamma v_r$. 
The operation $P\mapsto \gamma P$ for $\gamma\in \Sp(V_0)$ contains all simultaneous permutations of rows in $U$ and $U'$ respectively, and exchanges of the rows $u_i$ and $u_i'$ up to a sign for all $1\le i \le r$.

\begin{lemma}
    For any $k$-subspace $W\in \mathrm{L}$, there is an element $\gamma\in H$ such that the representative matrix $P'$ of $W'=\gamma W$ with respect to any basis has its first (resp.~last) $r\times r$-block invertible. 
\end{lemma}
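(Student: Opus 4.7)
The plan is to translate the statement into a combinatorial transversal basis problem, then verify the required Hall condition using the Lagrangian property of $W$.

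First I would analyze the action of $H$ on the representative matrix. A permutation $\sigma\in S_r$ simultaneously permutes the pairs $(u_i,u'_i)$ by $\sigma$, while each $\varepsilon_i$ swaps the $i$-th row of $U$ with the $i$-th row of $U'$ (with a sign change, since $\varepsilon_i(e'_i)=-e_i$). Consequently, for any subset $S\subseteq\{1,\ldots,r\}$, applying $\gamma_S:=\prod_{i\in S}\varepsilon_i$ to $W$ produces a representative matrix whose first $r\times r$ block $U_S$ has $i$-th row equal to $u_i$ for $i\notin S$ and $-u'_i$ for $i\in S$; applying instead $\gamma_{S^c}$ for the complementary subset $S^c$ places the same set of chosen rows into the last $r\times r$ block. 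Hence both halves of the lemma reduce to a single combinatorial question: exhibit $S\subseteq\{1,\ldots,r\}$ such that the collection $\{u_i:i\notin S\}\cup\{u'_i:i\in S\}$ is a basis of $k^r$.

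Next I would recast this as a transversal basis problem in matroid terms. The $2r$ rows $\{u_i,u'_i\}_{i=1}^r$ are vectors in $k^r$ spanning a subspace of dimension $r$ (the column rank of $P$), partitioned into the $r$ pairs $\{u_i,u'_i\}$; we seek a system of distinct representatives, one from each pair, that forms a basis of $k^r$. By Hall's marriage theorem in its matroid formulation (Rado's theorem for transversal matroids), such a transversal basis exists if and only if
\[
\dim_k\mathrm{span}\{u_i,u'_i:i\in J\}\ \geq\ |J|\qquad\text{for every }J\subseteq\{1,\ldots,r\}.
\]

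The crux of the argument is to verify this Hall condition, and it is here that the isotropy of $W$ is used. Viewed through the basis $v_1,\ldots,v_r$, each row of $P$ is a linear functional on $W$: the functional $u_i$ (resp.\ $u'_i$) extracts the $e_i$-coefficient (resp.\ $e'_i$-coefficient) of a vector in $W$. Letting $V_J:=\mathrm{span}_{k_0}\{e_i,e'_i:i\in J\}$, a symplectic subspace of dimension $2|J|$, and $V_{J^c}:=\mathrm{span}_{k_0}\{e_i,e'_i:i\notin J\}$ its symplectic complement, the common kernel in $W$ of the functionals $\{u_i,u'_i:i\in J\}$ is exactly $W\cap(V_{J^c}\otimes_{k_0}k)$. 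Since $W$ is Lagrangian, this intersection is isotropic in the $2(r-|J|)$-dimensional symplectic space $V_{J^c}\otimes k$, hence of dimension at most $r-|J|$. By rank-nullity, the span of $\{u_i,u'_i:i\in J\}$ in $W^*\cong k^r$ has dimension at least $|J|$, confirming Hall's condition.

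The main obstacle is recognizing the hidden transversal matroid structure and seeing that Hall's condition is precisely what the isotropy of $W$ provides; once this link is in place, both Rado--Hall and the symplectic dimension estimate are standard, and the lemma follows.
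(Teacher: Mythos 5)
Your proof is correct, and it takes a genuinely different route from the paper's. You first observe that the $H$-action reduces both the ``first block'' and ``last block'' statements to a single combinatorial question: choose from each pair $\{u_i, u'_i\}$ one row so that the chosen rows form a basis of $k^r$. You then apply Rado's theorem (the matroid form of Hall's marriage theorem) and verify the Hall condition in one stroke using the isotropic-dimension bound: $W \cap (V_{J^c}\otimes k)$ is isotropic in the $2(r-|J|)$-dimensional symplectic space $V_{J^c}\otimes k$, hence has dimension $\le r-|J|$, so by rank--nullity the functionals $\{u_i,u'_i:i\in J\}$ on $W\cong k^r$ span a space of dimension $\ge |J|$. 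All of this checks out, including your description of the $\varepsilon_i$-action (swap $u_i\leftrightarrow u'_i$ up to sign) and the observation that $\gamma_{S^c}$ puts the same transversal into the last block that $\gamma_S$ puts into the first. The paper argues instead by an increasing-rank iteration: it normalizes $U$ to $\begin{bmatrix}\bbI_s & 0 \\ U_{21} & 0\end{bmatrix}$ with $s=\rank U<r$, computes $\psi_0(v_j,v_\ell)$ for $j\le s<\ell$ to show that the lower-right block $U'_{22}$ must be nonzero (else $\rank P\le s<r$, a contradiction), and then swaps one row pair via a single $\varepsilon_{i_0}$ to increase $\rank U$, iterating until $U$ is invertible. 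Both proofs exploit the isotropy of $W$, but yours does so through a global Hall condition rather than a local nonvanishing claim. The trade-off is that your version is shorter and more conceptual but calls on Rado's theorem as an external tool, while the paper's is longer and more computational but fully self-contained and constructive, actually producing the element $\gamma$ step by step.
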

\begin{proof} 
    Let $P$ be a representative matrix of $W$. If the first (resp.~last) $r\times r$-block of $P$ is invertible, then so is the last (resp.~first) block of $P'=\varepsilon_1 \cdots \varepsilon_r P$.  
    So it suffices to show the first case. We may assume $U\neq 0$: otherwise $U'$ is invertible and after exchanging all rows $u_i$ and $u_i'$, the first block $U$ is invertible. Let $s=\rank U$; we may assume $s< r$.   After a suitable permutation of rows in $U$ and  column reductions (which amount to a change of basis $\{v_i\}$), we may assume that 
    \[ P=\begin{bmatrix}
        U \\ U'
    \end{bmatrix}, \quad U=\begin{bmatrix}
        \bbI_s & 0 \\
        U_{21} & 0
    \end{bmatrix}, \quad U'=\begin{bmatrix}
        U'_{11} & U_{12}' \\
        U'_{21} & U_{22}'
    \end{bmatrix}, \quad U'_{11}\in \Mat_{s\times s} (k). \]
We claim that $U_{22}'\neq 0$. For suppose that $U_{22}'=0$, that is, $u'_{ij}=0$ for $s+1\le i, j\le r$. 
For each $1\le j \le s$ and each $s+1 \le \ell \le r$, we have 
\begin{equation}
    v_j=e_j+\sum_{i=s+1}^r u_{ij} e_i + \sum_{i=1}^r u_{ij} e'_i, \quad v_\ell=\sum_{i=1}^r u_{i\ell}' e'_i. 
\end{equation}
One computes
\[ 0=\psi_0(v_j, v_\ell)=u_{j\ell}'+\sum_{i=s+1}^r u_{ij} u_{i\ell}'=u_{j \ell}' \quad (\text{as $u_{i\ell}'=0$ for $i\ge s+1$}). \]
Thus, $u_{j \ell}'=0$ for $1\le j\le s$ and $s+1 \le \ell \le r$ and one also has $U_{12}'=0$. This shows that  $\rank P=s$, a contradiction.

Since $U_{22}'\neq 0$, after permuting the last $r-s$ rows of $U$ and the corresponding last $r-s$-rows of $U'$ if necessary, we get a representative matrix $P=\begin{bmatrix}
    U \\ U' 
\end{bmatrix}$ with $\rank U>s$. Continuing this procedure if necessary, we obtain a representative matrix $P$ whose first $r\times r$-block $U$ is invertible. 
\end{proof}

After a suitable translation by $H$, we may represent $W$ by 
$\begin{bmatrix}
    \bbI_r \\
    T
\end{bmatrix}$, where $T=(t_{ij})\in \Mat_{r\times r}(k)$.
Since $W$ is maximal isotropic, the matrix $T$ is symmetric. For $\alpha=\begin{bmatrix}
    A & B \\
    C & D 
\end{bmatrix}\in \End(V_0)=\Mat_{2r}(k_0)$, with $A,B,C,D\in \Mat_r(k_0)$, one has 
\begin{equation}\label{eq:def_eq_symp}
\alpha(W)\subseteq W \iff C+DT-TA-TBT=0. 
\end{equation}
Note that in Section~\ref{sec:GL} we represented $W$ by $\begin{bmatrix}
    T \\
    \bbI_r
\end{bmatrix}$ instead, to find Equation~\eqref{eq:def_eq} instead of \eqref{eq:def_eq_symp}.
Thus, 
\begin{equation}
    \End(V_0,W)=\left \{\begin{bmatrix}
    A & B \\
    C & D 
\end{bmatrix}\in \Mat_{2r}(k_0): C+DT-TA-TBT=0 \, \right \}.
\end{equation}

Let $\Sym_r(k)\subseteq \Mat_r(k)$ be the set of all symmetric matrices. Let $\alpha\mapsto \alpha^\dagger$ denote the symplectic involution on $\End(V_0)$ with respect to $\psi_0$; one has 
\[
\Sp(V_0)=\{\alpha\in \GL(V_0): \alpha^\dagger \alpha=1\}.
\]
Note that $\End(V_0,W)$ is stable under $\dagger$. 
To see this: if $w\in W$, then $\psi_0(\alpha^\dagger(w),W)=\psi_0(w,W)=0$, so $\alpha^\dagger(w)\in W^\bot=W$. Then one has
\begin{equation}\label{eq:SpVoW}
\Sp(V_0,W)=\{\alpha\in \End(V_0,W)^\times: \alpha^\dagger \alpha=1\}, \end{equation}
the unitary group of $\End(V_0,W)$ with involution $\dagger$.

Let $W_0=W\cap V_0$ and $\wt W_0$ be $k_0$-subspaces defined as in Definition~\ref{def:envelope}.

\begin{lemma}
    One has $W_0=\wt W_0^\bot$.
\end{lemma}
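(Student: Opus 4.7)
The plan is to prove both inclusions $W_0 \subseteq \wt W_0^\bot$ and $\wt W_0^\bot \subseteq W_0$, exploiting three ingredients: $W$ is Lagrangian (so $W = W^\bot$ inside $V_{0,k}$), the form $\psi_0$ is non-degenerate over $k_0$ (so orthogonal complements commute with base change to $k$), and $\wt W$ is by definition defined over $k_0$, so $\wt W = \wt W_0 \otimes_{k_0} k$.

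For the inclusion $W_0 \subseteq \wt W_0^\bot$, I would start from $W_0 \otimes_{k_0} k \subseteq W = W^\bot$ inside $V_{0,k}$, so that taking perpendiculars gives $W_0^\bot \otimes_{k_0} k = (W_0 \otimes_{k_0} k)^\bot \supseteq W$. This exhibits $W_0^\bot \otimes_{k_0} k$ as an $L$-subspace defined over $k_0$ that contains $W$. By the minimality characterisation of the $k_0$-hull, $\wt W \subseteq W_0^\bot \otimes_{k_0} k$; intersecting with $V_0$ yields $\wt W_0 \subseteq W_0^\bot$. Dualising via non-degeneracy of $\psi_0$ on $V_0$ then gives $W_0 = W_0^{\bot\bot} \subseteq \wt W_0^\bot$.

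For the reverse inclusion $\wt W_0^\bot \subseteq W_0$, I would use $W \subseteq \wt W$ and the Lagrangian condition to obtain $\wt W^\bot \subseteq W^\bot = W$. Because $\wt W = \wt W_0 \otimes_{k_0} k$ is defined over $k_0$, taking perpendiculars commutes with base change: $\wt W^\bot = \wt W_0^\bot \otimes_{k_0} k$. Therefore $\wt W_0^\bot \otimes_{k_0} k \subseteq W$, and intersecting both sides with $V_0$ gives $\wt W_0^\bot \subseteq W \cap V_0 = W_0$.

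Combining the two inclusions yields $W_0 = \wt W_0^\bot$. I do not anticipate a serious obstacle: the only thing worth verifying carefully is that $\bot$ commutes with the extension of scalars $k_0 \hookrightarrow k$, which follows at once from the non-degeneracy of $\psi_0$ and flatness of $k/k_0$ (and is a special case of Lemma~\ref{lm:ff_descent}).
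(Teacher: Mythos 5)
Your proof is correct and follows essentially the same two-inclusion argument as the paper: in one direction you note that $W_0^\bot\otimes_{k_0}k$ is a $k_0$-rational subspace containing $W$ (the paper states this directly from $\psi_0(W_0,W)=0$, while you derive it by taking perpendiculars of $W_0\otimes k\subseteq W=W^\bot$), hence $\wt W_0\subseteq W_0^\bot$; in the other you use $\wt W^\bot\subseteq W^\bot=W$ and the compatibility of $\bot$ with base change, exactly as in the paper.
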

\begin{proof}
   Since $\psi_0(W_0, W)=0$, the space $W_0^\bot \otimes k$ is a $k$-subspace that is defined over $k_0$ and contains $W$. Therefore, $W_0^\bot \supseteq \wt W_0$. Conversely, since $\wt W_{0}\otimes k\supseteq W$, 
   \[ \wt W_0^\bot \subseteq \wt W_0^\bot\otimes k =(\wt  W_{0}\otimes k)^\bot \subseteq W^\bot=W.\]
So $\wt W_0^\bot\subseteq W\cap V_0=W_0$.
\end{proof}

Denote by $\mathrm{L}(V_0)^{\rm d}$ the locus consisting of $k$-subspaces $W \in \mathrm{L}$ such that $\wt W_0 = V_0$ (hence $W_0=0$). 
For any $W\in {\rm L}$, the pairing $\psi_0$ induces a non-degenerate alternating pairing $\psi_0$ on $W_0^\bot/W_0$ for which $\ol W:=W/W_{0,k}$ is maximal isotropic. Thus,  we have the following natural decomposition: 
\begin{equation}
    \mathrm{L}=\mathrm{L}(V_0)(k_0) \coprod_{ W_{0}} \mathrm{L}(W_{0}^\bot /W_{0})^{\rm d}, 
\end{equation}
where $W_0$ runs through all non-maximal isotropic $k_0$-subspaces of $V_0$, and where $\mathrm{L}(V_0)(k_0)$ is the set of maximal isotropic $k_0$-subspaces. Moreover, the map 
$W\mapsto \End(V_0,W)$ (resp.~$W\mapsto \Sp(V_0,W)$) gives a bijection between the set $\mathrm{L}(V_0)(k_0)$ and the set of all maximal parabolic $k_0$-subalgebras of $\End(V_0)$ of type $(r,r)$ (resp.~Siegel parabolic $k_0$-subgroup of $\Sp(V_0)$). Note that if $\alpha\in \Sp(V_0)$ is an element satisfying $\alpha(W_{0})=W_{0}$, then $\alpha(W_{0}^\bot)=W_{0}^\bot$. 

Denote by 
\begin{equation}\label{eq:calE}
  \calE=\calE_{\Sp(V_0)}:=\{ \End(V_0,W): W\in \mathrm{L} \}
\end{equation}
the set of all $k_0$-subalgebras of $\End(V_0)$ that occur as the relative endomorphism algebra of $W$ for some $W\in \mathrm{L}$. 
For each $E\in \calE$, set 
\begin{equation}
    \label{eq:LE}
    \mathrm{L}_E:=\{W\in L: E\subseteq \End(V_0,W) \}, \quad (\mathrm{L}_E)^0:=\{W\in \mathrm{L}: E = \End(V_0,W) \}.
\end{equation}
Then $(\mathrm{L}_{E})^0$ is non-empty, and we have 
\begin{equation}
   \mathrm{L}=\coprod_{E\in \calE} (\mathrm{L}_E)^0, \quad \mathrm{L}_E= \coprod_{E \subseteq E'}(\mathrm{L}_{E'})^0.
\end{equation}
When $k_0$ is finite, each stratum $(\mathrm{L}_E)^0$ is a quasi-projective subvariety. 

Similarly, denote by $[\calE]$ the set of $\Sp(V_0)$-conjugacy classes of all $k_0$-subalgebras $E$ in $\calE$ and by $[E]\subseteq \calE$ the $\Sp(V_0)$-conjugacy class in $\calE$.  For each $[E]$ in $[\calE]$, set 
\begin{equation}
    \mathrm{L}_{[E]}:=\bigcup_{E\in [E]} \mathrm{L}_E, \quad (\mathrm{L}_{[E]})^0:=\bigcup_{E\in [E]} (\mathrm{L}_{E})^0.
\end{equation}
Then we have a stratification by the strata $(\mathrm{L}_{[E]})^0$ with ``closure'' relation 
\begin{equation}\label{eq:st_L_[E]}
    \mathrm{L}=\bigcup_{[E]\in [\calE]} \mathrm{L}_{[E]},\quad  \mathrm{L}_{[E]}=\coprod_{[E]\subseteq [E']} (\mathrm{L}_{[E']})^0.
\end{equation}
When $k_0$ is finite, each stratum $(\mathrm{L}_{[E]})^0$ is a quasi-projective subvariety.

\begin{definition}
    Let $W\in \mathrm{L}$ and $\End(V_0,W)$ be the endomorphism algebra of $W$ over $k_0$. Let $\wt W_0$ be the envelope of $W$, let $W_0 = W \cap V_0$, and let 
    $\ol \psi_0: W_0^\bot / W_0 \times W_0^\bot / W_0 \to k_0$ 
    be the induced non-degenerate alternating pairing. 
    The \emph{reduced index} of $\Sp(V_0,W)$ in $\Sp(V_0)$ is defined by
    \[ \mathrm{ind}(W):=[\Sp(W_0^\bot / W_0):\Sp(W_0^\bot / W_0, \ol W)], \]
    where $\ol W=W/W_{o,k}$.
\end{definition}

\begin{lemma}
    Let $W\in \mathrm{L}$, and $E=\End(V_0,W)$. Then the envelope $\wt W_0$ and $\End(W_0^\bot / W_0,\ol W)$ are uniquely determined by $E$. Moreover, the reduced index $\mathrm{ind}(W)$ is uniquely determined by $E$.
\end{lemma}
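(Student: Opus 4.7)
The plan is to recover each of the three invariants in turn from the single datum $E=\End(V_0,W)$, viewed as a subalgebra of $\End(V_0)$. First, I would apply Proposition~\ref{prop:dec}(4) directly: its proof shows that the dimensions $\dim W_0$ and $\dim\wt W_0$ are read off from the sizes of the diagonal blocks of $E$ in the decomposition~\eqref{eq:endo.2}, and that the subspaces $W_0$ and $\wt W_0$ themselves are then characterised intrinsically inside $V_0$. This yields the assertion on $\wt W_0$. The lemma preceding this proposition, which identifies $W_0=\wt W_0^\bot$, is needed only to recognise that $\wt W_0/W_0$ coincides with the symplectic subquotient $W_0^\bot/W_0$.

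Next, with $W_0$ and $\wt W_0$ determined by $E$, the natural projection
\[
p = p_{\wt W_0/W_0}\colon \End(V_0,W_0,\wt W_0)\longrightarrow \End(\wt W_0/W_0)
\]
is canonical, and I plan to show that $p(E)=\End(W_0^\bot/W_0,\ol W)=\End(\wt W_0/W_0,\ol W)$. The containment $p(E)\subseteq \End(\wt W_0/W_0,\ol W)$ is built into the defining Equation~\eqref{eq:red} of $\End(V_0,W)$. For the reverse containment I would use the explicit block form~\eqref{eq:endo.2}: once splittings $\wt W_0=V_1\oplus V_2$ with $V_1=W_0$ and $V_0=\wt W_0\oplus V_3$ are chosen, every element of the $(2,2)$-block $\End(V_2,W_2)$ extends, by zero on the other blocks, to an element of $E$ realising it. Hence $\End(W_0^\bot/W_0,\ol W)$ is determined by $E$.

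For the reduced index, the group $\Sp(W_0^\bot/W_0)$ depends only on the non-degenerate symplectic space $(W_0^\bot/W_0,\ol\psi_0)$, which in turn depends only on $W_0$, so it is determined by $E$. By~\eqref{eq:SpVoW} applied inside this subquotient, $\Sp(W_0^\bot/W_0,\ol W)$ is the $\dagger$-unitary group of $\End(W_0^\bot/W_0,\ol W)$ for the induced involution; both the algebra and the restriction of $\dagger$ are determined by $E$, so the subgroup is as well. Thus $\mathrm{ind}(W)$ is determined.

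The main obstacle I expect is the surjectivity claim $p(E)=\End(W_0^\bot/W_0,\ol W)$: the inclusion $\subseteq$ is formal from~\eqref{eq:red}, but showing that every endomorphism of the subquotient lifts to an element of $E$ (not merely to $\End(V_0)$) requires the full block description from Proposition~\ref{prop:dec}(4), together with the verification that the induced involution on $\End(W_0^\bot/W_0,\ol W)$ agrees with the restriction of $\dagger$. Once these two functorial points are settled, the remaining deductions are immediate.
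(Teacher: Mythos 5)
The paper states this lemma without proof, so there is no proof to compare against; what you wrote is a correct way to fill in the gap. Your argument is sound: Proposition~\ref{prop:dec}(4) (applied with $K=k_0$, $L=k$, $V=V_0$) recovers the subspaces $W_0$ and $\wt W_0$ from $E$, the symplectic lemma $W_0=\wt W_0^\bot$ identifies $\wt W_0/W_0$ with the subquotient $W_0^\bot/W_0$, and the block description~\eqref{eq:endo.2} indeed gives $p_{\wt W_0/W_0}(E)=\End(\wt W_0/W_0,\ol W)$: the inclusion $\subseteq$ is Equation~\eqref{eq:red}, and $\supseteq$ follows by extending an element of the $(2,2)$-block by zero, which lies in $E$ by~\eqref{eq:endo.2} and maps under $p_{\wt W_0/W_0}$ to the given element. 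For the index, the only point worth stating explicitly is that the involution on $\End(W_0^\bot/W_0)$ is the symplectic involution of the induced non-degenerate pairing $\ol\psi_0$, which is canonically determined by $W_0$ and the fixed $\psi_0$ (it is not literally a restriction of $\dagger$, since there is no canonical map $\End(V_0)\to\End(W_0^\bot/W_0)$); once this is observed, the analogue of~\eqref{eq:SpVoW} in the subquotient gives $\Sp(W_0^\bot/W_0,\ol W)$ as the unitary group of the determined algebra with its determined involution, and the index follows.
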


Let $k_0[\ul X]$ be the polynomial ring over $k_0$ with variables $X_{ij}$ for $1\le i\le j \le r$ and $k_0[\ul X]_{\le d}$ be the $k_0$-vector subspace of polynomials of total degree $\le d$. Set $N_d:=\dim_{k_0} k_0[\ul X]_{\le d}$. For any $T=(t_{ij})\in \Sym_{r}(k)$, let ${\rm ev}_T:k_0[\ul X]\to k$ be the evaluation map at $T$. 
Set 
\begin{equation}
    \bbV(T):={\rm ev}_T(k_0[\ul X]_{\le 2}), \quad d(T):=\dim_{k_0} \bbV(T).
\end{equation}
Note that $N_2=d(T)$ if and only if the vectors  
$1, t_{ij}, t_{i_1 j_1} t_{i_2 j_2} $, for $1\le i\le j \le r, 1\le i_1\le j_1 \le r$ and $1\le i_2\le j_2 \le r$, are $k_0$-linearly independent. 

Proposition~\ref{prop:End=k_0_Sp}, Theorem~\ref{thm:L:nonempty} and Proposition~\ref{prop:endo2} below are the respective symplectic analogues of Proposition~\ref{prop:End=k_0}, Theorem~\ref{thm:nonempty} and Proposition~\ref{prop:endo}.

\begin{proposition}\label{prop:End=k_0_Sp}
   Let $T\in \Sym_{r}(k)$ and $W_T=\big\<\begin{bmatrix}
    \bbI_r \\ T
    \end{bmatrix}\big\>$ the corresponding $k$-subspace $W\in \mathrm{L}$. If $d(T)=N_2$, then $\End(V_0,W)=k_0$. 
\end{proposition}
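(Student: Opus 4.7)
The plan is to mirror the proof of Proposition~\ref{prop:End=k_0}, now starting from the symplectic defining equation \eqref{eq:def_eq_symp}: any $\alpha=\begin{bmatrix}A & B\\ C & D\end{bmatrix}\in\End(V_0,W_T)$ must satisfy
\[
C+DT-TA-TBT=0 \quad \text{in } \Mat_r(k).
\]
I would expand the left-hand side as a polynomial in the independent indeterminates $\{t_{ij}:i\le j\}$ (there are $r(r+1)/2$ of them because $T$ is symmetric) and use the hypothesis $d(T)=N_2$: this guarantees that the monomials $1$, $t_{ij}$ ($i\le j$), and $t_{i_1j_1}t_{i_2j_2}$ are $k_0$-linearly independent in $k$, so the $\Mat_r(k_0)$-coefficient of each monomial in the equation must vanish separately.

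First, the constant term forces $C=0$. For the linear part, write $T=\sum_i t_{ii}E_{ii}+\sum_{i<j}t_{ij}(E_{ij}+E_{ji})$. The $(p,q)$-entry of the coefficient of $t_{aa}$ in $DT-TA$ equals $D_{pa}\delta_{qa}-A_{aq}\delta_{pa}$, whose vanishing for all $p,q,a$ forces both $A$ and $D$ to be diagonal with $A_{ii}=D_{ii}$. The coefficient of $t_{ab}$ for $a<b$ is then $(A_{aa}-A_{bb})(E_{ab}-E_{ba})$, and its vanishing gives $A_{aa}=A_{bb}$; hence $A=D=a\bbI_r$ for some $a\in k_0$.

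The equation reduces to $TBT=0$. Looking at $(TBT)_{pq}=\sum_{i,j}t_{pi}B_{ij}t_{jq}$: for $p\ne q$ the monomial $t_{pp}t_{qq}$ is produced only by $(i,j)=(p,q)$, giving coefficient $B_{pq}$; for $p=q$ the monomial $t_{pp}^2$ is produced only by $(i,j)=(p,p)$, giving $B_{pp}$. Linear independence of the degree-$2$ monomials therefore forces $B=0$. Combining the three steps, $\alpha=a\bbI_{2r}$, so $\End(V_0,W_T)=k_0$.

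The main new subtlety relative to Proposition~\ref{prop:End=k_0} is the symmetry of $T$: it reduces the number of independent indeterminates and couples the contributions of $E_{ij}$ and $E_{ji}$ in the linear term. The key check is that, despite this coupling, the linear-coefficient equations still force $A$ and $D$ to be the same scalar matrix; once that is settled, the reduction of $TBT=0$ to $B=0$ proceeds essentially as in the general-linear case.
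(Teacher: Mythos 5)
Your argument is correct and follows essentially the same strategy as the paper: expand the symplectic defining equation \(C+DT-TA-TBT=0\) into monomials in the \(t_{ij}\) (\(i\le j\)), invoke the hypothesis \(d(T)=N_2\) to split it into coefficient-wise equations, and read off \(C=0\), then \(A=D=a\bbI_r\), then \(B=0\). If anything you are slightly more explicit than the paper on the linear part, as you make the use of the \(t_{ab}\) (\(a<b\)) coefficients to force \(A_{aa}=A_{bb}\) visible, whereas the paper only spells out the \(t_{ii}\) case before asserting the scalar conclusion.
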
 
\begin{proof}
    For $1\le i\le j\le r$, set \[ S_{ij}:=\begin{cases}
    E_{ij}+E_{ji} & \text{if $i<j$}; \\
    E_{ii} & \text{if $i=j$},
    \end{cases} \]
    then one has $T=\sum_{i\le j} t_{ij} S_{ij}$.
    Using Equation \eqref{eq:def_eq_symp}, we get 
\begin{equation} \label{eq:def-eq3}
   C+ \sum_{\substack{1\le i \le j \le r}}  (D S_{ij}-S_{ij}A)t_{ij}-\sum_{\substack{1\le i_1\le j_1 \le r \\ 1\le i_2\le j_2 \le r}} (S_{i_1 j_1} B S_{i_2j_2}) t_{i_1 j_1} t_{i_2 j_2} =0. 
\end{equation}

Since $N_2=d(T)$, the vectors $1, t_{ij}, t_{i_1 j_1} t_{i_2j_2}$ for all $i,j,i_1,j_1,i_2,j_2$ are $k_0$-linearly independent. So 
\[ C=0, \quad DS_{ij}-S_{ij}A =0, \quad 
\begin{cases}
  S_{i_1 j_1} B S_{i_2j_2}=0, & \text{if $(i_1,j_1)=(i_2,j_2)$}; \\
  S_{i_1 j_1} B S_{i_2j_2}+S_{i_2 j_2} B S_{i_1 j_1}=0 & \text{otherwise.}
\end{cases}\]

For $i_1=j_1$ and $i_2=j_2$ and $i_1\neq i_2$, one gets
\[ S_{i_1 i_1} B S_{i_2 i_2}+S_{i_2 i_2} B S_{i_1 i_1}=b_{i_1 i_2} E_{i_1 i_2}+b_{i_2 i_1} E_{i_2 i_1}=0, \] so $b_{i_1 i_2}=0$ for all $1\le i_1\neq i_2\le r$.
For $i_1=j_1=i_2=j_2$, one gets $S_{i_1 i_1} B S_{i_1 i_1}=b_{i_1 i_i} E_{i_1 i_1}=0$, so $b_{i_1 i_1}=0$ for $1\le i_1\le r$. This shows $B=0$. 

For $i=j$, one gets

\[ DS_{ii}=\sum_{1\le i_1\le r} d_{i_1, i} E_{i_1,i}, \]
\[ S_{ii}A=\sum_{1\le j_1\le r} a_{i j_1}
E_{i j_1}. \]
Thus $a_{ij}=d_{ij}=0$ for $i<j$ or $i>j$, and $a_{ii}=d_{ii}$ for all $1\le i\le r$.

It follows that $\begin{bmatrix}
    A & B \\
    C & D
\end{bmatrix}=a\bbI_n$ for some $a\in k_0$. This proves the proposition.
\end{proof}

\begin{theorem}\label{thm:L:nonempty}
    If $k/k_0$ is infinite, then $(\mathrm{L}_{[k_0]})^0$ and hence $\mathrm{L}(V_0)^{\rm d}\supseteq (\mathrm{L}_{[k_0]})^0$ are non-empty. 
\end{theorem}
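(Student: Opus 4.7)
The plan is to mirror the argument of Theorem~\ref{thm:nonempty} in the symplectic setting, exploiting Proposition~\ref{prop:End=k_0_Sp} in place of Proposition~\ref{prop:End=k_0}. First I would verify that $k/k_0$ is strictly infinite: since $k$ is algebraically closed, Theorem~\ref{thm:evT}(3) identifies strict infiniteness with non-exceptionality, and any exceptional extension $L/K$ with $L$ algebraically closed is finite by definition, so the hypothesis ``$k/k_0$ infinite'' forces condition (iii) of Theorem~\ref{thm:evT}(1).

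I would then apply Theorem~\ref{thm:evT}(1) to the polynomial ring $k_0[\ul X]$ in the $r(r+1)/2$ variables $X_{ij}$ with $1\le i\le j\le r$ and with $d=2$, producing entries $t_{ij}\in k$ for which the evaluation map ${\rm ev}_T\colon k_0[\ul X]_{\le 2}\to k$ is injective. Assembling these into the symmetric matrix $T\in \Sym_r(k)$, one has $d(T)=N_2$, and Proposition~\ref{prop:End=k_0_Sp} yields $\End(V_0,W_T)=k_0$ for the Lagrangian $W_T=\big\<\begin{bmatrix}\bbI_r \\ T\end{bmatrix}\big\>$. This shows $W_T\in (\mathrm{L}_{[k_0]})^0$, establishing non-emptiness.

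For the inclusion $(\mathrm{L}_{[k_0]})^0\subseteq \mathrm{L}(V_0)^{\rm d}$, I would argue by contradiction. Suppose $W\in (\mathrm{L}_{[k_0]})^0$ but $W\notin \mathrm{L}(V_0)^{\rm d}$, i.e., $\wt W_0\subsetneq V_0$. By the lemma $W_0=\wt W_0^\bot$ from Section~\ref{sec:symp}, this is equivalent to $W_0\ne 0$. Applying Proposition~\ref{prop:dec}(4) gives a decomposition $(V_0,W)=(V_1,V_{1,k})\oplus (V_2,W_2)\oplus (V_3,0)$ with $V_1=W_0$, and in the symplectic setting the identity $\wt W_0 = W_0^\bot$ forces $\dim V_3=\dim V_0-\dim\wt W_0=\dim W_0=\dim V_1\ge 1$. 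Then \eqref{eq:endo.2} exhibits $\Hom(V_3,V_1)$ as a nonzero off-diagonal block of $\End(V_0,W)$, producing non-scalar elements and contradicting $\End(V_0,W)=k_0$. No substantive obstacle remains; the only mild subtlety is translating ``infinite'' into ``strictly infinite'' via Theorem~\ref{thm:evT}(3), which is precisely where the algebraic closedness of $k$ is used.
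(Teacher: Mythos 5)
Your proposal is correct and mirrors the paper's own (terse) proof, which just says the result "follows from Theorem~\ref{thm:evT} and Proposition~\ref{prop:End=k_0_Sp}" by the same argument as Theorem~\ref{thm:nonempty}. Your fleshed-out account is accurate: the translation from ``infinite'' to ``strictly infinite'' via Theorem~\ref{thm:evT}(3) (using that $k$ is algebraically closed), the application of the evaluation criterion with the $r(r+1)/2$ variables $X_{ij}$ ($i\le j$) to produce a symmetric $T$ with $d(T)=N_2$, and the contradiction argument for $(\mathrm{L}_{[k_0]})^0\subseteq \mathrm{L}(V_0)^{\rm d}$ using $W_0=\wt W_0^\perp$ and Proposition~\ref{prop:dec}(4) are all sound and match the intended reasoning.
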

\begin{proof}
    As in the proof of Theorem~\ref{thm:nonempty}, this follows from Theorem~\ref{thm:evT} and Proposition~\ref{prop:End=k_0_Sp}. 
\end{proof}

\begin{proposition}\label{prop:endo2}
    Let $\{t_1'=1, t_2',\dots, t_s'\}\subseteq \{1,t_{ij}, t_{i_1j_1} t_{i_2 j_2}: 1\le i\le j  \le r, 1\le i_1\le j_1  \le r, 1\le i_2\le j_2~\le~r \}$ be a maximal $k_0$-linearly independent subset. Write 
\begin{equation}\label{eq:lin-comb2}
    t_{ij}=\sum_{\mu=1}^s \alpha_{ij}^\mu t_\mu', \quad t_{i_1j_1} t_{i_2 j_2}=\sum_{\mu=1}^s \alpha_{i_1j_1 i_2 j_2}^\mu t_\mu', \quad \text{with} \ \alpha_{ij}^\mu,\  \alpha_{i_1j_1 i_2 j_2}^\mu\in k_0. 
\end{equation}
Then $\End(V_0,W)$ consists of all matrices 
$   \begin{bmatrix}
    A & B \\
    C & D
\end{bmatrix}\in \Mat_n(k_0)$ satisfying
\begin{equation} \label{eq:endo3}
   C \delta_{1,\mu}+ \sum_{\substack{1\le i \le j \le r}}  (D S_{ij}-S_{ij}A) \alpha_{ij}^\mu -\sum_{\substack{1\le i_1\le j_1 \le r \\ 1\le i_2\le j_2 \le r}} (S_{i_1 j_1} B S_{i_2j_2}) \alpha_{i_1 j_1i_2 j_2}^\mu =0,
\end{equation}
for all $\mu=1,\dots, s$, where $\delta_{1,\mu}$ again denotes the Kronecker delta.
\end{proposition}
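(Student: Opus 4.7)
The plan is to imitate the proof of Proposition~\ref{prop:endo} step by step, using the symplectic defining equation \eqref{eq:def_eq_symp} in place of the $\GL$ one, and the already-derived expansion \eqref{eq:def-eq3} as the starting identity. Concretely, I would begin from the matrix identity
\[ C + \sum_{1\le i\le j\le r} (DS_{ij} - S_{ij}A)\, t_{ij} - \sum_{\substack{1\le i_1\le j_1\le r \\ 1\le i_2\le j_2\le r}} (S_{i_1 j_1} B S_{i_2 j_2})\, t_{i_1 j_1} t_{i_2 j_2} \;=\; 0 \quad \text{in } \Mat_r(k), \]
which is a consequence of $\alpha \in \End(V_0, W)$ via \eqref{eq:def_eq_symp} and the expansion $T = \sum_{i\le j} t_{ij}\, S_{ij}$ of a symmetric matrix $T$ in terms of the basis $\{S_{ij}\}_{i\le j}$.

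Next, I would substitute the expressions from \eqref{eq:lin-comb2}, namely $t_{ij} = \sum_\mu \alpha_{ij}^\mu\, t_\mu'$ and $t_{i_1 j_1} t_{i_2 j_2} = \sum_\mu \alpha_{i_1 j_1 i_2 j_2}^\mu\, t_\mu'$, treating the constant term $C$ as $C \cdot t_1'$ since $t_1' = 1$. Collecting the coefficient of each $t_\mu'$, the identity takes the form $\sum_{\mu=1}^s M_\mu\, t_\mu' = 0$ in $\Mat_r(k)$ with each $M_\mu \in \Mat_r(k_0)$, where
\[ M_\mu \;=\; C\, \delta_{1,\mu} + \sum_{1\le i\le j\le r} (D S_{ij} - S_{ij} A)\, \alpha_{ij}^\mu - \sum_{\substack{1\le i_1\le j_1\le r \\ 1\le i_2\le j_2\le r}} (S_{i_1 j_1} B S_{i_2 j_2})\, \alpha_{i_1 j_1 i_2 j_2}^\mu. \]
By the $k_0$-linear independence of $\{t_1', \dots, t_s'\}$ in $k$, we obtain $M_\mu = 0$ for every $\mu = 1, \ldots, s$, which is precisely Equation~\eqref{eq:endo3}. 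Conversely, if a matrix $\begin{bmatrix} A & B \\ C & D \end{bmatrix}$ satisfies \eqref{eq:endo3} for every $\mu$, then multiplying the $\mu$-th relation by $t_\mu'$ and summing over $\mu$ recovers \eqref{eq:def-eq3}, which is equivalent to $\alpha(W) \subseteq W$.

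There is no genuine obstacle here; the argument is a direct symplectic transcription of the proof of Proposition~\ref{prop:endo}. The two minor things to track are the sign convention inherited from \eqref{eq:def_eq_symp} (so that the quadratic term in $B$ comes with a minus sign), and the fact that indexing the quadratic sum over triangular pairs $(i_1 \le j_1,\; i_2 \le j_2)$ rather than all pairs gives the un-symmetrized coefficient $S_{i_1 j_1} B S_{i_2 j_2}$, in the same way that the GL case in \eqref{eq:endo2} keeps $E_{i_1 j_1} C E_{i_2 j_2}$ without a symmetrization factor.
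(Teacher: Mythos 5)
Your proposal is correct and takes essentially the same route as the paper's (one-line) proof: substitute \eqref{eq:lin-comb2} into \eqref{eq:def-eq3}, collect coefficients of the $t_\mu'$, and invoke their $k_0$-linear independence. You also spell out the easy converse, which is needed for the ``consists of all'' phrasing but which the paper leaves implicit; otherwise the two arguments coincide.
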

\begin{proof}
Equation~ \eqref{eq:endo3} follows from Equations~\eqref{eq:def-eq3} and~\eqref{eq:lin-comb2}.
\end{proof}

\section{Examples and applications}

\subsection{Endomorphism algebras of complex tori}\

Let $X=V/\Lambda$ be a complex torus, where~$V$ is a $\C$-vector space of dimension $n$ and $\Lambda$ is a lattice in $V$. We regard $V$ as a real vector space together with complex structure $J\in \End_{\R}(V)$, so $J^2=-\bbI_{2n}$. We have a decomposition $V_\C=V^{-1,0}\oplus V^{0,-1}$ into eigenspaces on which $J$ acts by the multiplication by $i$ on $V^{-1,0}$ (resp.~$-i$ on $V^{0,-1}$). Put $V_0:=\Lambda\otimes \Q$ so we have $V=V_0\otimes_\Q \R$. The endomorphism algebra of $X$ is 
\[ \End^0(X)=\{\alpha\in \End(V_0): \alpha \cdot J=J \cdot \alpha \text{ on $V$} \  \}. \]
\begin{lemma}\label{lm:end_cpxtori}
    We have $\End^0(X)=\End(V_0,V^{-1,0}).$
\end{lemma}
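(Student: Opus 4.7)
The plan is to translate the condition ``commutes with $J$'' on the real vector space $V$ into a condition on the Hodge decomposition $V_\C = V^{-1,0}\oplus V^{0,-1}$, and then exploit the fact that elements of $\End(V_0)$ are $\Q$-linear on $V_0$, hence respect complex conjugation on $V_\C$.

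First, I would observe that for any $\alpha\in\End(V_0)\subseteq\End_\R(V)\subseteq\End_\C(V_\C)$, the identity $\alpha J=J\alpha$ on $V$ is equivalent to the same identity on $V_\C$ (by $\C$-linear extension), which in turn is equivalent to $\alpha$ preserving each eigenspace of $J$ on $V_\C$, namely $\alpha(V^{-1,0})\subseteq V^{-1,0}$ and $\alpha(V^{0,-1})\subseteq V^{0,-1}$. This reduces the lemma to showing that the single condition $\alpha(V^{-1,0})\subseteq V^{-1,0}$ already forces $\alpha(V^{0,-1})\subseteq V^{0,-1}$.

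Next, I would use complex conjugation. Let $c\colon V_\C\to V_\C$ be the $\R$-linear involution induced by the decomposition $V_\C=V\otimes_\R\C$. Since $J$ is a real operator and its eigenvalues $\pm i$ are complex conjugate, one has $c(V^{-1,0})=V^{0,-1}$. Moreover, any $\alpha\in\End(V_0)$ is in particular $\R$-linear on $V$, so its $\C$-linear extension commutes with $c$. Therefore, if $\alpha(V^{-1,0})\subseteq V^{-1,0}$, applying $c$ gives
\[
\alpha(V^{0,-1})=\alpha(c(V^{-1,0}))=c(\alpha(V^{-1,0}))\subseteq c(V^{-1,0})=V^{0,-1},
\]
as required. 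Combining the two steps yields the two inclusions $\End^0(X)\subseteq\End(V_0,V^{-1,0})$ and $\End(V_0,V^{-1,0})\subseteq\End^0(X)$.

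I do not expect any real obstacle here; the only point that needs a moment of care is that the relative endomorphism algebra is defined with respect to the extension $\C/\Q$ (since $V_0$ is the rational model and $V^{-1,0}$ is a $\C$-subspace of $V_0\otimes_\Q\C=V_\C$), and that the $J$-eigenspace condition may equivalently be tested on $V_\C$ rather than on the intermediate real space $V$. Once these identifications are spelled out, the Galois-equivariance argument via $c$ finishes the proof in two lines.
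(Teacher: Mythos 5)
Your proof is correct and takes essentially the same approach as the paper: both directions rest on the observation that $\alpha$ and $J$ are real operators, hence their $\C$-linear extensions commute with complex conjugation $c$, and $c$ swaps $V^{-1,0}$ and $V^{0,-1}$. The paper phrases the converse as showing $\alpha J - J\alpha = 0$ on $V^{-1,0}$ and applying $c$ to extend the vanishing to $V^{0,-1}$, while you phrase it as transporting the inclusion $\alpha(V^{-1,0})\subseteq V^{-1,0}$ through $c$ to get $\alpha(V^{0,-1})\subseteq V^{0,-1}$; these are the same argument.
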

\begin{proof}
    Suppose $\alpha\in \End^0(X)$. Since $\alpha$ commutes with $J$, for any $v\in V^{-1,0}$, we have
\[ J \alpha(v)=\alpha (J v)=i \alpha(v), \text{\ and then\ }  \alpha(v)\in V^{-1,0}.\]    
This shows $\End^0(X)\subseteq \End(V_0,V^{-1,0})$. Conversely, suppose $\alpha\in \End(V_0,V^{-1,0})$. Extending to $\C$, we have $\alpha \cdot J-J\cdot \alpha=0$ on $V^{-1,0}$. Taking the complex conjugation $c$ we obtain 
$c(\alpha \cdot J-J\cdot \alpha)=0$ on $V^{0,-1}$.
Since $\alpha, J\in \End(V)$ and $\End(V_\C)=\End(V)\otimes \C$, the maps $\alpha$ and $J$ are invariant under $c$. Thus, we have $\alpha \cdot J-J\cdot \alpha=0$ on $V_\C$ and therefore $\alpha\in \End^0(X)$. 
\end{proof}

\subsection{The case $\boldsymbol{k_0=\R}$ and $\boldsymbol{k=\C}$.}\label{sec:end_R}\ 

Let $V_0$ be a real vector space of dimension $n$.
\begin{proposition}\label{prop:real}
    Let $W\in \Gr(V_0,r)$, $W_0 := W \cap V_0$, $r_0:=\dim_\R W_0$ and $r_2:=n-(2r-r_0)$. Then $\dim_\R \wt W_0=2r-r_0$ and 
\begin{equation}\label{eq:real}
    \End(V_0,W)\simeq \begin{bmatrix}
        \Mat_{r_0}(\R) & * & * \\
        0 & \Mat_{r-r_0}(\C) & * \\
        0 & 0 &  \Mat_{r_2}(\R)\\
    \end{bmatrix}.
\end{equation}   
Here $\Mat_{r-r_0}(\C)\subseteq \Mat_{2(r-r_0)}(\R)$ is viewed as an $\R$-subalgebra and $*=\Mat_{a\times b}(\R)$ for suitable integers $0\le a, b\le n$.    
\end{proposition}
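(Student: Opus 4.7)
My plan is to reduce the computation to the case where $W$ is $\R$-null and $\R$-dense, using the canonical decomposition of Proposition~\ref{prop:dec}(4), and then to identify the middle block $\End(V_2,W_2)$ with $\Mat_{r-r_0}(\C)$ via the complex-torus dictionary of Lemma~\ref{lm:end_cpxtori}.

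First, I apply Proposition~\ref{prop:dec}(4) to write $(V_0,W)=(V_1,W_1)\oplus (V_2,W_2)\oplus (V_3,0)$ with $V_1=W_0$, $W_1=W_{0,\C}$, $V_1\oplus V_2=\wt W_0$, and $V_3$ a chosen complement of $\wt W_0$. Formula~\eqref{eq:endo.2} immediately gives the block upper-triangular shape with diagonal blocks $\End(V_1)=\Mat_{r_0}(\R)$, $\End(V_2,W_2)$, and $\End(V_3)=\Mat_{\dim V_3}(\R)$, and arbitrary $\R$-matrix blocks above the diagonal. So it remains to compute $\dim_\R V_2$ and $\End(V_2,W_2)$.

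Next I compute $\dim_\R V_2$. By construction $(V_2,W_2)$ has $W_2$ both $\R$-null ($W_2\cap V_2=0$) and $\R$-dense ($\wt W_{2,0}=V_2$). Let $c$ denote complex conjugation on $V_2\otimes \C$. Since $\C/\R$ is Galois, the $\R$-hull equals $\wt W_2=W_2+c(W_2)$, and $c$ stabilises $W_2\cap c(W_2)$; applying Lemma~\ref{lm:ff_descent} to this $c$-stable subspace gives $W_2\cap c(W_2)=(W_2\cap c(W_2)\cap V_2)\otimes\C\subseteq (W_2\cap V_2)\otimes\C=0$. Combined with $\R$-density $W_2+c(W_2)=V_2\otimes\C$, this yields $\dim_\R V_2=\dim_\C(W_2\oplus c(W_2))=2\dim_\C W_2=2(r-r_0)$. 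Hence $\dim_\R \wt W_0=r_0+2(r-r_0)=2r-r_0$, and $\dim_\R V_3=n-(2r-r_0)=r_2$.

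For $\End(V_2,W_2)$, the decomposition $V_2\otimes\C=W_2\oplus c(W_2)$ makes the composition $V_2\hookrightarrow V_2\otimes\C\twoheadrightarrow W_2$ an $\R$-linear isomorphism (its kernel is $V_2\cap c(W_2)$, which is $c$-stable and contained in $W_2\cap V_2=0$ after conjugation, arguing as above). Transporting the $\C$-structure of $W_2$ back produces a complex structure $J\in\End_\R(V_2)$ with $(V_2,J)\cong W_2$ as $\C$-vector spaces, so $\dim_\C(V_2,J)=r-r_0$, and $W_2$ is exactly the $i$-eigenspace $V_2^{-1,0}$ of $J$ on $V_2\otimes\C$. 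Applying Lemma~\ref{lm:end_cpxtori} (with $(V_2,J)$ playing the role of the tangent space of a complex torus) yields
\[
\End(V_2,W_2)=\End(V_2,V_2^{-1,0})=\{\alpha\in \End_\R(V_2): \alpha J=J\alpha\}=\End_\C(V_2,J)\simeq \Mat_{r-r_0}(\C).
\]
Substituting these three diagonal blocks into~\eqref{eq:endo.2} gives the asserted form~\eqref{eq:real}.

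The only nontrivial step is the dimension count in paragraph two, where I must correctly deploy the $\R$-null and $\R$-dense hypotheses through the Galois action of $c$; everything else is a bookkeeping consequence of Proposition~\ref{prop:dec}(4) and Lemma~\ref{lm:end_cpxtori}.
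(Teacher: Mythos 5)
Your proposal is correct and follows essentially the same route as the paper: reduce via the canonical decomposition of Proposition~\ref{prop:dec}(4) and Equation~\eqref{eq:endo.2} to the middle block, then identify that block with $\Mat_{r-r_0}(\C)$ by constructing a complex structure $J$ and invoking Lemma~\ref{lm:end_cpxtori}. The paper works directly with the quotient $\wt W_0/W_0$ rather than a chosen complement $V_2$, but this is only a cosmetic difference. One small imprecision: your appeal to Lemma~\ref{lm:ff_descent} to justify $W_2\cap c(W_2)=(W_2\cap c(W_2)\cap V_2)\otimes_\R\C$ is misplaced, since that lemma concerns base change of intersections of submodules already defined over the base; what you actually need is Galois descent for $\C/\R$ (a $c$-stable $\C$-subspace of $V_{2,\C}$ is the complexification of its real points), which is the fact the paper tacitly uses when it remarks, for perfect $K$, that $W_0$ equals $\bigcap_\sigma \sigma(W)$. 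This does not affect the correctness of the argument, only the citation.
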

\begin{proof}
   Observe that $\wt W=W+c(W)$, where $c$ denotes complex conjugation (on~$V_{0,\mathbb{C}}$) as in Lemma~\ref{lm:end_cpxtori}, and that $W_{0, \C}=W\cap c(W)$ is the largest $\C$-subspace contained in $W$ which is defined over $\R$.
   We also have $\dim_\R \wt W_0+\dim_\R W_0=2r$. Put $r_0=\dim_\R W_0$; then $\dim \wt W_0=2r-r_0$ and $\dim_\R \wt W_0/W_0=2(r-r_0)$. Set $V_1:=\wt W_0/W_0$ and $\ol W:=W/W_{0,\C}$. Then by Equation~\eqref{eq:endo.2} we obtain \eqref{eq:real} with middle block in $\End(V_1,\ol W)$. So it suffices to show $\End(V_1,\ol W)\simeq \Mat_{r-r_0}(\C)$. 

   Since $\ol W+c(\ol W)=V_{1,\C}$ and $\ol W \cap c(\ol W)=0$, we have $V_{1,\C}=\ol W\oplus c(\ol W)$. Define a complex structure $J$ on $V_1$ by letting $J$ act by multiplication by $i$ on $\ol W$ (resp.~$-i$ on $c(\ol W)$). By the argument of Lemma~\ref{lm:end_cpxtori}, we have 
   \[ \End(V_1,\ol W)=\End_\R (V_1, J)\simeq \Mat_{r-r_0}(\C). \]
   This proves the proposition. 
\end{proof}

\subsection{Endomorphism rings of supersingular abelian varieties} \label{sec:end_ssav}\

Let $k$ be an algebraically closed field of \ch $p>0$, where $p$ is a prime number. Let $X$ be a supersingular abelian variety over $k$ of dimension $n>1$. Then the endomorphism algebra $\End^0(X)$ of $X$ is isomorphic to $\Mat_n(B_{p,\infty})$, where $B_{p,\infty}$ denotes the unique definite quaternion $\Q$-algebra of discriminant $p$. Choose a maximal order $O\subseteq B_{p,\infty}$ that contains an element $\Pi$ satisfying $\Pi^2=-p$. Since the algebra $\Mat_n(B_{p,\infty})$ has class number one, every maximal order is conjugate to the maximal order $\Mat_n(O)$. 

Let $\varphi:\wt X\to X$ be the minimal isogeny of $X$ in the sense of \cite[Lemma~1.8]{lioort}, i.e. $\wt X$ is a superspecial abelian variety over $k$ such that any other isogeny from a superspecial abelian variety to~$X$ factors through $\varphi$. Then $\End(X)\subseteq \End(\wt X)$ and $\End(\wt X)$ is a maximal order of $\End^0(\wt X)\simeq \Mat_n(B_{p,\infty})$. For simplicity, we choose an isomorphism $\End(\wt X)\simeq \Mat_n(O)$ and make identifications so that $\End(X)\subseteq \Mat_n(O)=\End(\wt X)$. For any  prime $\ell\neq p$, we have $\End(X)\otimes \Z_\ell=\Mat_n(O_\ell)$, where $O_\ell:=O\otimes \Z_\ell$.

Let $M\subseteq \wt M$ be the (contravariant) \dieu modules of $X$ and $\wt X$, respectively. We denote by $\sfF$ and $\sfV$ the Frobenius and Verschiebung operators on $\wt M$. One has 
\begin{equation}\label{eq:end_DM}
   \End(\wt X)\otimes \Zp = \Mat_n(O_p)\simeq \End_{\rm DM}(\wt M)^{\rm opp},\quad  
\End(X)\otimes \Zp\simeq \End_{\rm DM}(M)^{\rm opp}, 
\end{equation}
where $O_p:=O\otimes \Zp$ and the superscript ${\rm opp}$ denotes the opposite ring (with multiplication $a\circ b:=b\cdot a$). The ring $\Mat_n(O)$ carries the canonical involution $A\mapsto A^*=(\ol{A})^T$ and this gives an isomorphism $\Mat_n(O)\simeq \Mat_n(O)^{\rm opp}$ (noting that $A_1 \cdot A_2 \mapsto (A_1\cdot A_2)^*=A_2^*\cdot A_1^*=A_1^*\circ A_2^*$). Composing with this canonical involution, one has
\[ \End(\wt X)\otimes \Zp = \Mat_n(O_p)\simeq \End_{\rm DM}(\wt M),\quad  
\End(X)\otimes \Zp\simeq \End_{\rm DM}(M). \] 
Thus, to find $\End(X)$ one is reduced to determining the endomorphism ring $\End_{\rm DM}(M)$ of $M$.

We consider the special case where $\sfV \wt M\subseteq M \subseteq \wt M$, or equivalently, $\ker \varphi\subseteq \wt X[F]$, where $F:\wt X\to \wt X^{(p)}$ is the relative Frobenius morphism on $\wt X$. Put $W:= M/\sfV \wt M$, and 
\[ V_0:=\wt M^\diamond/\sfV \wt M^\diamond, \quad \text{ where } \wt M^\diamond:=\{m\in \wt M: \sfF^2 m=-p m\}.  \]
Then $V_0$ is an $n$-dimensional vector space over $\F_{p^2}$ and $W\subseteq V_{0,k}$ is a $k$-vector subspace.
Recall \[ \End(V_0,W):=\{\alpha\in \End(V_0): \alpha(W)\subseteq W\}. \]
Let $m_{\sfV}: \End_{\rm DM}(\wt M)\to \End(V_0)$ be the natural projection map. We have
\begin{equation}\label{eq:EndM}
    \End_{\rm DM}(M)=\{\alpha\in \End_{\rm DM}(\wt M): m_{\sfV}(\alpha)\in \End(V_0,W)\}. 
\end{equation}
That is, $\End_{\rm DM}(M)$ is the pre-image of $\End(V_0,W)$ under the map $m_{\sfV}$. From this, one reduces the problem of finding $\End(X)$ to computing the relative endomorphism algebra $\End(V_0,W)$. We summarise this as follows:

\begin{proposition}
    Let $\varphi: \wt X\to X$ be the minimal isogeny of a supersingular abelian variety~$X$ with respective \dieu modules $M\subseteq \wt M$. With the inclusions $\End(X)\subseteq \End(\wt X)\subseteq \End_{\rm DM}(\wt M)$ and $\End(X) \subseteq \End_{\rm DM}(M)\subseteq \End_{\rm DM}(\wt M)$, we have 
    \begin{equation}\label{eq:endX}
     \End(X)=\End(\wt X)\cap \End_{\rm DM}(M)\quad \text{and} \quad [\End(\wt X):\End(X)]=[\End_{\rm DM}(\wt M): \End_{\rm DM}(M)].    
    \end{equation}

    Moreover, if $\ker \varphi\subseteq \wt X[F]$, then 
    \begin{equation}
    \End(X)=\{\alpha\in \End(\wt X): m_{\sfV}(\alpha)\in \End(V_0,W)\}. 
\end{equation}
In particular, $[\End(\wt X):\End(X)]=[\End(V_0):\End(V_0,W)]$.
\end{proposition}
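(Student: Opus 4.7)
For the first equality $\End(X)=\End(\wt X)\cap \End_{\rm DM}(M)$ inside $\End_{\rm DM}(\wt M)$, the inclusion $\subseteq$ is built into the stated chain of inclusions. For the reverse, I would invoke the contravariant Dieudonn\'e correspondence: an $\alpha\in\End(\wt X)$ descends (necessarily uniquely) along the isogeny $\varphi$ to an element of $\End(X)$ if and only if $\alpha$ preserves $\ker\varphi$, which, under the identification $\wt M/M\simeq M(\ker\varphi)$, is equivalent to requiring the corresponding element of $\End_{\rm DM}(\wt M)$ to preserve $M$, i.e.\ to lie in $\End_{\rm DM}(M)$.

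For the first index equality $[\End(\wt X):\End(X)]=[\End_{\rm DM}(\wt M):\End_{\rm DM}(M)]$, I would apply the second isomorphism theorem to the two subgroups $\End(\wt X)$ and $\End_{\rm DM}(M)$ of $\End_{\rm DM}(\wt M)$. Since their intersection is $\End(X)$, there is a natural injection
\[
\End(\wt X)/\End(X) \hookrightarrow \End_{\rm DM}(\wt M)/\End_{\rm DM}(M),
\]
and it remains to show surjectivity, i.e.\ $\End(\wt X)+\End_{\rm DM}(M)=\End_{\rm DM}(\wt M)$. The key input is \eqref{eq:end_DM}, which gives $\End(\wt X)\otimes\Zp=\End_{\rm DM}(\wt M)$. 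Both $\End_{\rm DM}(\wt M)$ and $\End_{\rm DM}(M)$ are full $\Zp$-lattices in the same isocrystal endomorphism algebra $\Mat_n(B_{p,\infty}\otimes\Qp)$, so their quotient $F$ is a finite $p$-primary abelian group; the composite $\End(\wt X)\to F$ becomes, after tensoring with $\Zp$, the surjection $\End_{\rm DM}(\wt M)\twoheadrightarrow F$, and since $F$ is finite $p$-primary, surjectivity of the tensored map forces surjectivity of the original one.

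For the ``moreover'' statement, the description of $\End(X)$ follows immediately by substituting \eqref{eq:EndM} into the first equality. For the final index identity, I would combine the already-proved first index identity with the observation that, by \eqref{eq:EndM}, $\End_{\rm DM}(M)=m_{\sfV}^{-1}(\End(V_0,W))$, so that $m_{\sfV}$ induces a natural injection
\[
\End_{\rm DM}(\wt M)/\End_{\rm DM}(M)\hookrightarrow \End(V_0)/\End(V_0,W),
\]
which becomes an isomorphism once we know that $m_{\sfV}:\End_{\rm DM}(\wt M)\to\End(V_0)$ is surjective. Under the identification $\End_{\rm DM}(\wt M)\simeq \Mat_n(O_p)$ for superspecial $\wt X$, the map $m_{\sfV}$ is identified with reduction modulo the uniformiser $\Pi$ (recall $\Pi^2=-p$), i.e.\ $\Mat_n(O_p)\twoheadrightarrow \Mat_n(O_p/\Pi O_p)=\Mat_n(\F_{p^2})=\End(V_0)$, whence surjectivity is clear.

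The main obstacle I anticipate is the bookkeeping: keeping track of the various Dieudonn\'e-module quotients $\wt M/\sfV\wt M$ and $\wt M^\diamond/\sfV\wt M^\diamond$, the precise location of $W$ inside $V_{0,k}$, and the compatibility of the reduction map $\Mat_n(O_p)\to \Mat_n(\F_{p^2})$ with the geometrically-defined $m_{\sfV}$. Once these identifications are pinned down, the rest of the proof is formal lattice algebra driven by \eqref{eq:end_DM}, \eqref{eq:EndM}, and the second isomorphism theorem.
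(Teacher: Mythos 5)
Your proof is correct and uses precisely the ingredients the paper supplies: the identifications $\End(\wt X)\otimes\Zp\simeq\End_{\rm DM}(\wt M)$, $\End(X)\otimes\Zp\simeq\End_{\rm DM}(M)$ from Equation~\eqref{eq:end_DM}, the description $\End_{\rm DM}(M)=m_{\sfV}^{-1}\bigl(\End(V_0,W)\bigr)$ from Equation~\eqref{eq:EndM}, and the identification of $m_{\sfV}$ with reduction mod $\Pi$. In the paper this proposition is stated as a summary of the preceding discussion, so there is no separate displayed proof to compare against line by line; the underlying argument is the same.

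One small stylistic remark: for the index identity $[\End(\wt X):\End(X)]=[\End_{\rm DM}(\wt M):\End_{\rm DM}(M)]$ you route through the second isomorphism theorem and then prove surjectivity of $\End(\wt X)\to\End_{\rm DM}(\wt M)/\End_{\rm DM}(M)$ by tensoring with $\Zp$. This is valid, but a slightly more direct variant (and the one the paper's surrounding text suggests) is to note that $\End(X)\otimes\Z_\ell=\Mat_n(O_\ell)=\End(\wt X)\otimes\Z_\ell$ for every $\ell\neq p$, so the global index is a $p$-power equal to the local index at $p$, which is $[\End_{\rm DM}(\wt M):\End_{\rm DM}(M)]$ by Equation~\eqref{eq:end_DM}. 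Both routes buy the same result; yours avoids invoking the $\ell\neq p$ computation at the cost of the extra step showing $F$ is finite $p$-primary and descending surjectivity.
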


\subsection{Automorphism groups and masses of polarised supersingular abelian varieties}\label{sec:auto}\ 

Let $(X,\lambda)$ be a principally polarised supersingular abelian variety over $k$, and let $\varphi:(\wt X,\wt \lambda)\to (X,\lambda)$ be the minimal isogeny, where $\wt \lambda=\varphi^* \lambda$. Note that $\wt \lambda$ has $p$-power degree.
Let $\ul M=(M\,\<\, , \>)\subseteq \ul {\wt M}=(\wt M, \<\, ,\>)$ be the corresponding \dieu modules with quasi-polarisation. Then 
\[ \Aut_{\rm DM}(\ul M)=\{\alpha\in \Aut_{\rm DM}(\ul {\wt M}): \alpha(M)=M \} \]
and
\[ \begin{split}
    \Aut(X,\lambda)&=\{\alpha\in \Aut(\wt X,\wt \lambda): \alpha(M)=M \} \\
                   &=\Aut(\wt X,\wt \lambda)\cap \Aut_{\rm DM} (\ul M). 
\end{split} \]
This shows that in general the computation of $\Aut(X,\lambda)$ can be reduced to computing the groups $\Aut(\wt X,\wt \lambda)$ and $\Aut_{\rm DM} (\ul M)$. Unfortunately, as far as we know, computing either one is complicated. A small trick is observing that $\Aut(X,\lambda)$ is a finite subgroup of $\Aut_{\rm DM} (\ul M)$. So we can bound the size of $\Aut(X,\lambda)$ by the sizes of maximal finite subgroups of $\Aut_{\rm DM} (\ul M)$. In particular, if the torsion subset $\Aut_{\rm DM} (\ul M)_{\rm tors}$ is equal to $\{\pm 1\}$, then $\Aut(X,\lambda)=\{\pm 1\}$.

Concerning the computation of $\Aut(\wt X,\wt \lambda)$, one can change the problem to its arithmetic counterpart. Choose a supersingular elliptic curve $E$ over $k$ such that $\End(E)=O$. The functor $\wt X\mapsto \Hom(E,\wt X)$ induces an equivalence of categories between the category of polarised superspecial abelian varieties over $k$ and the category of positive-definite Hermitian right $O$-lattices; see~\cite[Corollary 4.9]{ibukiyama-karemaker-yu}. Therefore, if $(L,h)$ is the Hermitian $O$-lattice corresponding to $(\wt X, \wt \lambda)$ under this equivalence, then we have
\[ \Aut(\wt X, \wt \lambda)\simeq \Aut(L,h). \]
Kirschmer~\cite[Chapter 9]{KirschmerHab} gives a complete (finite) list of the automorphism groups $\Aut(L,h)$
for all positive-definite Hermitian right $O$-lattices
$(L,h)$ with class number one or two.

Let $x=(X,\lambda)$ and denote by  
$\Lambda_{x}$ the set of isomorphism classes of polarised supersingular abelian varieties $(X',\lambda')$ over $k$ such that $(X',\lambda')[\ell^\infty]\simeq (X,\lambda)[\ell^\infty]$ for all primes $\ell$.  
This is a finite set and the mass of $\Lambda_x$ is defined by 
\[ \Mass(\Lambda_x):=\sum_{(X',\lambda')\in \Lambda_x} \frac{1}{\vert \Aut(X',\lambda') \vert}.\]
For the minimal isogeny $\wt x=(\wt X, \wt \lambda)$ we define the finite set $\Lambda_{\wt x}$ and the mass $\Mass(\Lambda_{\wt x})$ in the same way. Then we have, cf.~\cite[Proposition 2.12]{karemaker-yobuko-yu},
\begin{equation}
    \label{eq:mass_relation}
    \Mass(\Lambda_{x})=\Mass(\Lambda_{\wt x})\cdot [\Aut_{\rm DM}(\ul{\wt M}):\Aut_{\rm DM}(\ul M)].
\end{equation}

In what follows, we assume that 
\begin{equation}\label{eq:asswtlambda}
    \ker \wt \lambda=\alpha_p^{2c}\subseteq \wt X[F],\quad \text{for some $c\ge 0$.}
\end{equation}
We have the inclusion $M\subseteq \wt M$. Also $M$ is self-dual with respect to the quasi-polarisation $\<\, , \>$, since $\lambda$ is a principal polarisation on $X$. So we have $\wt M^t \subseteq M\subseteq \wt M$, where ${\wt M}^t$ is the dual lattice of $\wt M$ with respect to $\<\, , \>$. Let $\wt M^{\diamond}$ be the skeleton of $\wt M$ defined by $\sfF^2+p=0$. 
Set 
\begin{equation}
    \label{eq:wtW0}
    V_0^{\rm min}:=\wt M^{\diamond} / {\wt M}^{t,\diamond},
\end{equation}
which is a $2c$-dimensional vector space over $\F_{p^2}$. Moreover, the pairing $p\<\, , \>: \wt M\times \wt M\to W(k)$ induces a non-degenerate alternating pairing
\begin{equation}\label{eq:psi0}
   \psi^{\rm min}_0: V_0^{\rm min} \times V_0^{\rm min} \to \F_{p^2}. 
\end{equation}
Inside $V_{0,k}^{\rm min}=\wt M / {\wt M}^{t}$ there is a $c$-dimensional subspace 
\[ \ol M:=M/{\wt M}^t\subseteq  V_{0,k}^{\rm min}\]
which is isotropic with respect to the symplectic pairing $\psi_0^{\rm min}$. We have $\Aut_{\rm DM}(\wt M, \<\, , \>)=\Aut_{\rm DM}(\wt M,p \<\, , \>)=\Aut_{\rm DM}(\wt M^{\diamond},p \<\, , \>)$ and a surjective map $\wt M^{\diamond} \to V_0^{\rm min}$.
\begin{lemma}\label{lm:surj}
    The surjection $\wt M^{\diamond} \to V_0^{\rm min}$ induces a surjection
\[ m: \Aut_{\rm DM}({\wt M},p\<\,,\>) \to \Sp(V_0^{\rm min}).
\]
\end{lemma}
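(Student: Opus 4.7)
My plan is to identify $\Aut_{\rm DM}(\wt M, p\<\cdot,\cdot\>)$ with a quaternionic unitary group over the maximal order $O_p$ that admits a smooth integral model whose special fibre is $\Sp(V_0^{\rm min}, \psi_0^{\rm min})$, and then to conclude surjectivity from Hensel's lemma.

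First, the superspecial condition $\sfF^2 = -p$ on $\wt M$ yields a canonical descent isomorphism $\wt M \cong \wt M^{\diamond} \otimes_{\Z_{p^2}} W(k)$ of Dieudonn\'e modules, identifying $\End_{\rm DM}(\wt M)$ with $\End_{O_p}(\wt M^{\diamond})$ (the action of $\Pi$ corresponding to $\sfF$). The alternating pairing $p\<\cdot,\cdot\>$ on $\wt M$ repackages into a non-degenerate $\Pi$-Hermitian form $H\colon \wt M^{\diamond} \times \wt M^{\diamond} \to O_p$, the sesquilinear twist being forced by the $\sigma$-linearity of $\sfF$, so that $\Aut_{\rm DM}(\wt M, p\<\cdot,\cdot\>) = U(\wt M^{\diamond}, H)$. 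The hypothesis $\ker \wt\lambda \subseteq \wt X[F]$, i.e.\ $\sfV\wt M \subseteq \wt M^t$, translates on the skeleton into $\Pi\,\wt M^{\diamond} \subseteq \wt M^{t,\diamond}$; hence $V_0^{\rm min}$ is killed by $\Pi$, acquires its natural $O_p/\Pi O_p = \F_{p^2}$-structure, and the reduction of $H$ modulo $\Pi$ is exactly the alternating form $\psi_0^{\rm min}$ (the involution $\Pi \mapsto -\Pi$ forcing the reduced form to be alternating rather than Hermitian in the usual sense).

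Second, $U(\wt M^{\diamond}, H)$ is the group of $\Z_{p^2}$-points of a smooth affine group scheme $\calU$ over $\Z_{p^2}$ with special fibre $\Sp(V_0^{\rm min}, \psi_0^{\rm min})$, and $m$ is the induced reduction-modulo-$\Pi$ map on rational points. Since $\Z_{p^2}$ is complete local with residue field $\F_{p^2}$ and $\calU$ is smooth, Hensel's lemma supplies the surjectivity $\calU(\Z_{p^2}) \twoheadrightarrow \calU(\F_{p^2})$, which is precisely $m$. The main obstacle is exhibiting the smooth integral model $\calU$ and verifying that its special fibre is really the symplectic group rather than some closely related classical group (e.g.\ a unitary or orthogonal one) --- a careful but standard analysis of the quaternionic Hermitian form in the ramified setting. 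A more hands-on alternative avoids the group-scheme language by lifting a generating set of $\Sp(V_0^{\rm min}, \psi_0^{\rm min})$ element-by-element: for each generator one lifts its defining data from $V_0^{\rm min}$ and $\F_{p^2}$ to $\wt M^{\diamond}$ and $O_p$ respectively, and checks directly that the resulting $O_p$-linear endomorphism preserves $H$ and has the prescribed reduction.
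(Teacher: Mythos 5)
Your overall strategy---produce a smooth integral model of the automorphism group and invoke surjectivity of reduction over a complete local ring---matches the skeleton of the paper's proof, and your first paragraph (descent via the skeleton, identification with a quaternionic unitary group, recovering $\psi_0^{\rm min}$ from the Hermitian form) is the right setup. The paper realises $\Aut_{\rm DM}(\wt M,\<\,,\>)$ as a parahoric subgroup of $J(\Qp)$, where $J$ is the Rapoport--Zink group (an inner form of $\Sp_{2g}$), and uses the Bruhat--Tits smooth model $\calJ$ over $\Zp$, so the two formalisms are close.

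The gap is in your second paragraph: you assert that the special fibre of the integral model $\calU$ \emph{is} $\Sp(V_0^{\rm min},\psi_0^{\rm min})$, and then apply Hensel's lemma. But $J$ is not quasi-split (it is the nontrivial inner form of $\Sp_{2g}$ over $\Qp$), so it has no hyperspecial parahoric, and the special fibre of the Bruhat--Tits scheme $\calJ$ attached to $\Aut_{\rm DM}(\wt M,\<\,,\>)$ carries a nontrivial unipotent radical $U$ (and, when $2c<g$, the reductive quotient also has a second factor coming from $\wt M^{t,\diamond}/\Pi\wt M^\diamond$). So smoothness only gives you surjectivity onto the $\bbF_p$-points (or $\F_{p^2}$-points) of the full special fibre; it does not by itself give surjectivity onto $\Sp(V_0^{\rm min})$, which is (a factor of) the \emph{reductive quotient}. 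The missing step, which the paper supplies explicitly, is the surjectivity of $\calJ(\Fp)\to(\calJ\otimes\Fp)^{\rm rdt}(\Fp)$; this follows because $U$ is a successive extension of $\bbG_a$'s and $H^1(\Fp,\bbG_a)=0$, hence $H^1(\Fp,U)=0$. Without this cohomological (or equivalent Lang-type) argument, your Hensel step lands in the wrong group. A secondary, minor issue: the unitary group $U(\wt M^\diamond,H)$ is naturally the $\Zp$-points of a group scheme over $\Zp$, not the $\Z_{p^2}$-points of a scheme over $\Z_{p^2}$; the mismatch is repaired at the level of reductive quotients via Weil restriction, but as stated it conflates two different base rings. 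The element-by-element lifting alternative you mention at the end could in principle work, but you do not carry it out, and it would in effect have to re-prove the vanishing that the $H^1$ argument packages cleanly.
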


\begin{proof}
Let $J$ be the algebraic group over $\Qp$ of the automorphism group of the quasi-polarised rational \dieu module $(\wt M\otimes W[1/p], \<\, , \>)$ \cite[Proposition 1.12]{RZ}. The group $J$ is an inner form of $\Sp_{2g}$ over $\Qp$ \cite[Remark 1.15]{RZ}, and hence it is semi-simple and simply connected. 
Observe that $\Aut_{\rm DM}(\wt M,\<\, , \>)$ is a parahoric subgroup of $J(\Qp)=\Aut(\wt M\otimes W[1/p], \<\, , \>)$, since $\<\, ,\>$ is a polarisation of parahoric type by assumption and since $J$ is simply connected; see the proof of Lemma 5.2 of \cite{ibukiyama-karemaker-yu} and \cite[Theorem 3.13]{platonov-rapinchuk}. 
By Bruhat-Tits theory~\cite{BT2}, there exists a connected smooth group scheme $\calJ$ over $\Zp$ with generic fibre $J$ such that $\Aut_{\rm DM}(\underline{\wt M})=\calJ(\Zp)$. Then the map $m$ is given by the composition
\[ m: \calJ(\Zp) \to \calJ(\Fp) \to (\calJ\otimes \Fp)^{\rm rdt}(\Fp),\]
where $(\calJ\otimes \Fp)^{\rm rdt}$ is the maximal reductive quotient of $\calJ\otimes \Fp$. The surjectivity of the first map follows from the smoothness of $\calJ$ over $\Zp$. To show the surjecivity of the second map, it suffices to show  
that the kernel $U$ of $\calJ\otimes \Fp \to (\calJ\otimes \Fp)^{\rm rdt}$ satifies $H^1(\Fp, U)=0$.
Since~$U$ is unipotent, it is a successive extension of additive groups $\bbG_a$, and the desired vanishing follows from $H^1(\Fp, \bbG_a)=0$ and induction on the dimension of $U$.    
\end{proof}

Recall from Equation~\eqref{eq:SpW} that 
\[ \Sp(V_0^{\rm min},\ol M):=\{ \alpha\in \Sp(V_0^{\rm min}): \alpha(\ol M)=\ol M \}.\]
On the other hand, we have
\begin{equation}
    \label{eq:loc_index}
    \Aut_{\rm DM}(\underline{M})=\{\alpha\in \Aut_{\rm DM}(\underline{\wt M}): m(\alpha)\in \Sp(V_0^{\rm min},\ol M) \}.
\end{equation}
Since the map $m$ is surjective, one has 
\begin{equation}\label{eq:loc_index2}
    [\Aut_{\rm DM}(\underline{\wt M}):\Aut_{\rm DM}(\underline{M})] =[\Sp(V_0^{\rm min})
    :\Sp(V_0^{\rm min}, \ol M)]. 
\end{equation}
Let $\Lambda_{g, p^c}$ be the set of isomorphism classes of polarised superspecial abelian varieties $(X',\lambda')$ of dimension $g$ with $\ker \lambda'\simeq \alpha_p^{2c}$. 
By the assumption Equation~\eqref{eq:asswtlambda} one identifies $\Lambda_{\wt x}$ with $\Lambda_{g, p^c}$, and we have
\begin{equation}
    \label{eq:Lambda_gpr}
    \Mass(\Lambda_{\wt x})=\Mass(\Lambda_{g,p^c}).
\end{equation}
By \cite[Theorem 3.1]{ibukiyama-karemaker-yu}, we have the mass formula
\[ \Mass(\Lambda_{g,p^c})=\prod_{i=1}^g \frac{|\zeta(1-2i)|}{2} \cdot  L_{p,p^c},  \]
where     
  \begin{equation}
    \label{eq:Lgpc}
      L_{g,p^c} =\prod_{i=1}^{g-2c} (p^i + (-1)^i)\cdot \prod_{i=1}^c
  (p^{4i-2}-1) 
 \cdot \frac{\prod_{i=1}^g
  (p^{2i}-1)}{\prod_{i=1}^{2c}(p^{2i}-1)\prod_{i=1}^{g-2c} (p^{2i}-1)}.
  \end{equation}
By Equations~\eqref{eq:mass_relation}, \eqref{eq:loc_index2} and \eqref{eq:Lambda_gpr}
we obtain the following result.
\begin{proposition}
    \label{prop:mass_Lambdax}
    Let $\varphi: (\wt X,\wt \lambda)\to (X,\lambda)$ be the minimal isogeny of a principally polarised supersingular abelian variety~$x=(X,\lambda)$ with respective quasi-polarised \dieu modules $(M, \<\, ,\>)\subseteq (\wt M, \<\, ,\>)$. Assume that 
    $\ker \wt \lambda=\alpha_p^{2c}\subseteq \wt X[F]$ for some $c\ge 0$. Let $(V_0^{\rm min},\psi_0^{\rm min})$ be the $2c$-dimensional symplectic space over $\F_{p^2}$ defined in Equations~\eqref{eq:wtW0} and~\eqref{eq:psi0}, and let $\ol M:=M/\wt M^t\subseteq V_{0,k}^{\rm min}$ be the image of $M$. Then 
\begin{equation} \label{eq:mass_Lambda_x}
    \Mass(\Lambda_x)=\prod_{i=1}^g \frac{|\zeta(1-2i)|}{2} \cdot [\Sp(V_0^{\rm min}):\Sp(V_0^{\rm min}, \ol M)] \cdot L_{p,p^c}
\end{equation}
where $L_{g,p^c}$ is the local factor defined in Equation~\eqref{eq:Lgpc}.    
\end{proposition}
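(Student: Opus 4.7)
The statement is essentially an assembly of identities already established in the excerpt, so the plan is to combine them cleanly and verify that the hypothesis $\ker\wt\lambda=\alpha_p^{2c}\subseteq\wt X[F]$ puts us in the regime where each ingredient applies.

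The plan is to proceed in four short steps. First, apply the multiplicative decomposition of the mass along the minimal isogeny, namely Equation \eqref{eq:mass_relation},
\[
\Mass(\Lambda_x)=\Mass(\Lambda_{\wt x})\cdot[\Aut_{\rm DM}(\underline{\wt M}):\Aut_{\rm DM}(\underline{M})],
\]
which was proved in \cite[Proposition 2.12]{karemaker-yobuko-yu} and holds without any restriction on $\wt\lambda$. Second, use the assumption $\ker\wt\lambda\simeq\alpha_p^{2c}$ to identify the isogeny class set $\Lambda_{\wt x}$ with $\Lambda_{g,p^c}$, which gives Equation \eqref{eq:Lambda_gpr}, and then substitute the explicit mass formula \cite[Theorem 3.1]{ibukiyama-karemaker-yu},
\[
\Mass(\Lambda_{g,p^c})=\prod_{i=1}^g\frac{|\zeta(1-2i)|}{2}\cdot L_{g,p^c},
\]
with $L_{g,p^c}$ as in Equation \eqref{eq:Lgpc}.

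Third, rewrite the local index $[\Aut_{\rm DM}(\underline{\wt M}):\Aut_{\rm DM}(\underline{M})]$ in symplectic/Lagrangian terms. Here the second hypothesis $\ker\wt\lambda\subseteq\wt X[F]$ is used: it ensures that $\wt M^t\subseteq\sfV\wt M$ on the skeleton side, so that the quotient $V_0^{\rm min}=\wt M^{\diamond}/\wt M^{t,\diamond}$ inherits a well-defined non-degenerate symplectic form $\psi_0^{\rm min}$ over $\F_{p^2}$ from $p\langle\cdot,\cdot\rangle$, and $\ol M=M/\wt M^t$ is a Lagrangian subspace (it is maximal isotropic because $M$ is self-dual under $\langle\cdot,\cdot\rangle$). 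By Equation \eqref{eq:loc_index} the stabiliser $\Aut_{\rm DM}(\underline M)$ is the preimage of $\Sp(V_0^{\rm min},\ol M)$ under the reduction map $m:\Aut_{\rm DM}(\underline{\wt M})\to\Sp(V_0^{\rm min})$, and by Lemma \ref{lm:surj} this map is surjective; hence
\[
[\Aut_{\rm DM}(\underline{\wt M}):\Aut_{\rm DM}(\underline{M})]
=[\Sp(V_0^{\rm min}):\Sp(V_0^{\rm min},\ol M)],
\]
which is exactly Equation \eqref{eq:loc_index2}.

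Finally, substitute these three pieces into Equation \eqref{eq:mass_relation} to arrive at Equation \eqref{eq:mass_Lambda_x}. There is no real obstacle beyond bookkeeping, since the only non-formal input, namely surjectivity of $m$, is already supplied by Lemma \ref{lm:surj} via Bruhat--Tits theory (smoothness of the parahoric $\Zp$-model of $J$ plus the vanishing $H^1(\Fp,U)=0$ for the unipotent radical). If anything required care, it would be checking that the hypothesis $\ker\wt\lambda=\alpha_p^{2c}\subseteq\wt X[F]$ is exactly what is needed both to realise $(V_0^{\rm min},\psi_0^{\rm min})$ as a $2c$-dimensional symplectic space over $\F_{p^2}$ with $\ol M$ Lagrangian, and to identify $\Lambda_{\wt x}$ with $\Lambda_{g,p^c}$; but once this compatibility is recorded, the proposition follows by direct substitution.
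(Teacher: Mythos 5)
Your proposal is correct and follows exactly the paper's route: the paper derives the proposition by directly combining Equations~\eqref{eq:mass_relation}, \eqref{eq:loc_index2} and \eqref{eq:Lambda_gpr}, which is precisely the three-step substitution you carry out. The only small slip is in your parenthetical justification of why $\ker\wt\lambda\subseteq\wt X[F]$ matters: the correct Dieudonn\'e-theoretic consequence is $\sfV\wt M\subseteq\wt M^t$ (so that $\psi_0^{\rm min}$ is well defined on the skeleton quotient), not $\wt M^t\subseteq\sfV\wt M$; this does not affect the logical structure of your argument, which remains sound.
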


\section{Proofs of Theorems~\ref{thm:main} and~\ref{thm:oc}}

\subsection{Supersingular EO strata}\

Let $g\ge 2$ and $\calA_g$ be the moduli space over $\Fpbar$ of $g$-dimensional principally polarised abelian varieties. 
Let $\calS_g$ be the supersingular locus of $\calA_g$. As ever, let $k$ be an algebraically closed field of characteristic~$p$.

A $\mathrm{BT}_1$ of rank $2g$ over $k$ is a finite group scheme $G$ of order $p^{2g}$ such that the following two sequences are exact
\[ \begin{tikzcd}
    G \arrow[r, "F_{G/k}"] & G^{(p)} \arrow[r, "V_{G/k}"] & G, \quad G^{(p)} \arrow[r, "V_{G/k}"] & G \arrow[r, "F_{G/k}"] & G^{(p)}
\end{tikzcd},\]
where $F_{G/k}$ and $V_{G/k}$ are the Frobenius and Verschiebung morphisms, respectively.
A $\mathrm{BT}_1$ $G$ of rank $2g$ over $k$ together with an isomorphism $\lambda: G\to G^D$ satisfying $\lambda^D=-\lambda$, where $G^D$ denotes the Cartier dual of $G$, is called a \emph{polarised} $\mathrm{BT}_1$, denoted $(G,\lambda)$.

Following \cite{OortEO}, an \emph{elementary sequence} of length $g$ is a function $\varphi:\{0,1,\dots, g\}\to \{0,1,\dots, g\}$ such that 
\[ \varphi(0)=0, \quad \varphi(i)\le \varphi(i+1) \le \varphi(i)+1, \ \forall\, 0\le i\le g-1. \]
It is shown in \emph{loc.~cit.~}that there is a bijection between the set of isomorphism classes of polarised $\mathrm{BT}_1$ of rank $2g$ over $k$ and the set $\Phi$ of elementary sequences of length $g$. For each $(G,\lambda)$, we denote by $\varphi(G,\lambda)$ the associated elementary sequence. For each $\varphi\in \Phi$, its associated Ekedahl-Oort (EO) stratum is defined by
\[ S_\varphi:=\{(X,\lambda)\in \calA_g(k): \varphi(X[p],\lambda)=\varphi\}. \]
This yields the EO stratification:
\[ \calA_g=\coprod_{\varphi\in \Phi} S_\varphi,\]
where each stratum $S_\varphi$ is quasi-affine and of equi-dimension $|\varphi| := \sum_{i=1}^g \varphi(i)$, cf.~\cite[Theorem 1.2]{OortEO}.
Set 
\[ \Phi^{\rm ss}:=\{\varphi\in \Phi: S_\varphi\subseteq \calS_g\}.\]
The set $\Phi^{\rm ss}$ consists of $\varphi\in \Phi$ such that 
$\varphi(g-c)=0$ for $c\ge \lfloor g/2 \rfloor$, cf.~
\cite[Thorem 4.8, Step 2]{COirr}, \cite[Remark 2.4.3]{harashita:SSEO}.
For each $\varphi\in \Phi^{\rm ss}$, let $c(\varphi)$ be the smallest integer $c$ such that $\varphi(g-c)=0$.
Let
\[ \calS_g^{\rm eo}:=\bigcup_{\varphi\in \Phi^{\rm ss}} S_\varphi \]
be the union of the supersingular EO strata. The closure relation for EO strata determines a partial order on $\Phi$, under which $\varphi' \prec \varphi$ if and only if $\varphi'(i) \leq \varphi(i)$ for all $i = 0, \ldots, g$, and where $\varphi' \prec \varphi$ implies that $S_{\varphi'} \subseteq \overline{S}_{\varphi}$, where the bar denotes the Zariski closure. Using this order, in $\Phi^{\rm ss}$, there is a unique maximal element $\varphi_{\max}$, given by
\[ \varphi_{\max}(g-i)=\lfloor g/2 \rfloor-i, \quad i=0,\dots, \lfloor g/2 \rfloor.\]
Then we have $\calS_g^{\rm eo}=\ol S_{\varphi_{\max}}$.

We shall describe the subvariety $\calS_g^{\rm eo}$ by constructing models for its irreducible components. To do so, we recall basic properties on finite flat group schemes from \cite{li, lioort}. 
Let $S$ be a base scheme over $\Fp$, which is assumed to be locally noetherian. Recall that a finite flat commutative group scheme $G$ over $S$ is said to be an \emph{$\alpha$-group} if $F_{G/S}=0$ and $V_{G/S}=0$. Every $\alpha$-group is locally in $S$ for the Zariski topology isomorphic to $\alpha_p^r$ for some integer $r$, called the \emph{rank} of $G$, which is a locally constant function on $S$.

Let $\pi:G\to S$ be an $\alpha$-group over $S$ of rank $r$, and denote by $m:G\times G\to G$, $e:S\to G$, and $\iota:G\to G$ the multiplication map, the zero section and the inverse map, respectively. 
Let~$\calO_G$ be the structure sheaf of $G$. 
The \emph{$\alpha$-sheaf} of $G$, denoted $\calA(G)$, is defined to be the subsheaf of $\pi_* \calO_G$ that consists of local sections $s\in \pi_* \calO_G$ satisfying 
\begin{equation}
    \label{eq:primitve}
    m^*(s)=s\otimes 1 + 1\otimes s. 
\end{equation}
Then $\calA(G)$ is a locally free sheaf of $\calO_S$-modules on $S$ of rank $r$. It is equal to the hom sheaf
\begin{equation}
    \label{eq:homsheaf} 
    \ul {\Hom}(G,\alpha_{p,S}): (T\to S) \mapsto \Hom_{T}(G\times_S T, \alpha_{p,S} \times_S T). 
\end{equation}
\begin{lemma}[{cf.~\cite{li}, \cite[2.4]{lioort}}]
   The functor $G\mapsto \calA(G)$ induces an anti-equivalence of categories between the category of $\alpha$-groups over $S$ and that of locally free sheaves of $\calO_S$-modules. Moreover, it is compatible with Cartier duality.   
\end{lemma}

We fix a supersingular elliptic curve $E_0$ over $\F_{p^2}$ whose Frobenius endomorphism $\pi_{E_0}$ satisfies $\pi_{E_0}=-p$, and set $E:=E_0\otimes_{\F_{p^2}} k$. The existence of $E_0$ is guaranteed by Honda-Tate theory, cf.~\cite[1.2]{lioort}.

\begin{proposition}
[{\cite[Proposition 3.1.5]{harashita:SSEO}} ]    \label{COH}
    Let $(X,\lambda)\in \calA_g(k)$ be a geometric point. Then $(X,\lambda)\in \calS_g^{\rm eo}$ if and only if there exist an integer $0\le c\le \lfloor g/2 \rfloor$, a polarisation $\mu$ on the superspecial abelian variety $E^g$ such that $\ker \mu\simeq \alpha_p^{2c}$, and a polarised isogeny $\rho: (E^g,\mu)\to (X,\lambda)$. 
    
    Moreover, if $(X,\lambda)\in S_{\varphi_{\max}}$, then $c=\lfloor g/2 \rfloor$; and $(X,\lambda)$ is superspecial if and only if one can take $c=0$.   
\end{proposition}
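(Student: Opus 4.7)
The proof proceeds via the Li--Oort minimal isogeny \cite[Lemma 1.8]{lioort} together with a polarised Dieudonn\'e-module analysis. For any supersingular abelian variety $X$ of dimension $g \ge 2$ there is a unique minimal isogeny $\rho\colon \wt X \to X$ from a superspecial abelian variety, and since $g \ge 2$ we may identify $\wt X \simeq E^g$. For a polarisation $\lambda$ on $X$, set $\wt\lambda := \rho^{\ast}\lambda = \rho^{\vee} \circ \lambda \circ \rho$; let $(M,\<\ ,\>) \subseteq (\wt M, \<\ ,\>)$ be the corresponding quasi-polarised Dieudonn\'e modules, and write $\wt M^{t}$ for the dual lattice of $\wt M$ with respect to $\<\ ,\>$.

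For the direction $(\Leftarrow)$, suppose $\rho\colon (E^{g},\mu) \to (X,\lambda)$ is a polarised isogeny with $\ker\mu \simeq \alpha_p^{2c}$ and $0 \le c \le \lfloor g/2\rfloor$. Then $X$ is supersingular by isogeny invariance of the Newton stratum. The shape of $\ker\mu$, whose Dieudonn\'e module satisfies $\sfF = \sfV = 0$, is inherited by $\ker\rho \subseteq \ker\mu$, giving $\ker\rho \subseteq E^{g}[F]$, or equivalently $\sfV\wt M \subseteq M \subseteq \wt M$. One computes $\dim_k(\wt M/M) = c$ and the a-number $a(X) \ge g - c \ge \lceil g/2\rceil$, which forces $\varphi(X[p],\lambda)(g - \lfloor g/2\rfloor) = 0$, so $\varphi(X[p],\lambda) \in \Phi^{\rm ss}$ and $(X,\lambda) \in \calS_{g}^{\rm eo}$.

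For the direction $(\Rightarrow)$, starting from $(X,\lambda) \in \calS_{g}^{\rm eo}$ with $\varphi := \varphi(X[p],\lambda) \in \Phi^{\rm ss}$, I would apply the minimal isogeny $\rho\colon E^g \to X$ and verify that $\ker\rho \subseteq E^g[F]$. Since $\varphi(g - \lfloor g/2\rfloor) = 0$, the a-number $a(X) = g - \varphi(g)$ is at least $\lceil g/2\rceil$; together with the minimality of $\rho$ among polarised isogenies from superspecial varieties, this inequality is strong enough to force $\sfV\wt M \subseteq M$. Once this is established, principal polarisation of $\lambda$ gives $\wt M^{t} \subseteq M$, and the quotient $\wt M / \wt M^{t}$ (the Dieudonn\'e module of $\ker\wt\lambda$) is killed by both $\sfF$ and $\sfV$ and is self-dual for the induced pairing, hence has even dimension $2c$ and is necessarily isomorphic to the Dieudonn\'e module of $\alpha_p^{2c}$.

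For the moreover statement: when $(X,\lambda) \in S_{\varphi_{\max}}$, one has $\varphi_{\max}(g) = \lfloor g/2\rfloor$, so $a(X) = \lceil g/2\rceil$, and the dimension count from the previous paragraph pins down $c = \lfloor g/2\rfloor$. Conversely, $c = 0$ holds iff $\ker\rho = 0$ iff $\rho$ is an isomorphism iff $X$ is superspecial. The main obstacle I anticipate is the precise justification in the $(\Rightarrow)$ direction that $\varphi \in \Phi^{\rm ss}$ implies the $F$-kernel inclusion $\sfV\wt M \subseteq M$; this passes through a careful classification of the polarised truncated Dieudonn\'e modules compatible with elementary sequences in $\Phi^{\rm ss}$, for which I would invoke Harashita's structural description of supersingular $\mathrm{BT}_{1}$s in~\cite{harashita:SSEO} rather than reprove it from scratch.
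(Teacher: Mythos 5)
The paper attributes this proposition to Harashita~\cite[Proposition 3.1.5]{harashita:SSEO} and supplies no proof of its own, so your proposal can only be judged on its internal merits.

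Your ($\Leftarrow$) direction contains a genuine gap. From $\ker\rho\subseteq\ker\mu\simeq\alpha_p^{2c}$ you correctly get $\sfV\wt M\subseteq M\subseteq\wt M$ and the bound $a(X)\ge g-c\ge\lceil g/2\rceil$, but the step ``$a(X)\ge\lceil g/2\rceil$ forces $\varphi(\lceil g/2\rceil)=0$'' is false as a statement about elementary sequences. The $a$-number only gives $\varphi(g)=g-a(X)\le\lfloor g/2\rfloor$, and since $\varphi$ is merely non-decreasing with increments $\le 1$, this does not pin down $\varphi(\lceil g/2\rceil)$. Concretely, for $g=4$ the elementary sequence $\varphi=(0,0,1,1,2)$ has $a=2=\lceil g/2\rceil$ but $\varphi(2)=1\neq 0$, so $\varphi\notin\Phi^{\rm ss}$. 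The proposition is of course still true --- such a $\varphi$ cannot arise from a $(X,\lambda)$ admitting the stated isogeny --- but establishing that requires computing the full canonical filtration of $M/pM$ from the chain $\wt M^t\subseteq M\subseteq\wt M$ with $\wt M$ superspecial and $\sfV\wt M\subseteq\wt M^t$, not merely bounding the $a$-number. (Note you actually have the stronger inclusion $\sfV\wt M\subseteq\wt M^t$ since $\ker\mu$ is killed by $F$ and $V$; this is what makes Harashita's finer analysis go through.)

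Your ($\Rightarrow$) direction is not a proof: you explicitly defer the key step --- that $\varphi\in\Phi^{\rm ss}$ forces $\sfV\wt M\subseteq M$ for the minimal isogeny --- to Harashita's classification of supersingular $\mathrm{BT}_1$'s, which is precisely the content of the proposition being cited. That makes the overall argument circular. Your ``moreover'' computations are fine, conditional on the main statement: $a(X)=\lceil g/2\rceil$ together with $a(X)\ge g-c$ and $c\le\lfloor g/2\rfloor$ indeed pins down $c=\lfloor g/2\rfloor$, and $c=0$ iff $\rho$ is an isomorphism iff $X$ is superspecial is correct.
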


Put $r:=\lfloor g/2 \rfloor$.
For any $0\leq c \leq r $,
let $P_c$ be the set of isomorphism classes of polarisations~$\mu$ on $E^g$ such that $\ker \mu\simeq \alpha_p^{2c}\subseteq E^g[F]$. We fix a complete set of representatives $\mu_1,\dots \mu_{h_c}$ for $P_c$ and identify $P_c$ with $\{\mu_1,\dots, \mu_{h_c}\}$. Note that every polarisation on $E^g$ is defined over $\F_{p^2}$. 
In particular, $\ker \mu$ is a finite flat subgroup scheme defined over $\F_{p^2}$. 

\begin{definition}\label{def:Xmu} 
For any polarisation $\mu\in P_r$, let 
\[ \calX_{\mu}:{(\F_{p^2}{\rm-Sch})} \to {\rm (Set)} \]
be the functor from the category of $\F_{p^2}$-schemes to the category of sets, which sends an $\F_{p^2}$-scheme $S$ to the set of isomorphism classes of 
polarised isogenies $\rho:(E_{0}^g,\mu)\times S\to (X,\lambda)$ over $S$ such that 
\begin{itemize}
    \item [(i)] $\ker \rho$ is an $\alpha$-group of rank $r$ over $S$;
    \item [(ii)] $(X,\lambda)$ is a (necessarily principally) polarised abelian scheme of relative dimension $g$ over $S$.
\end{itemize}
Two objects $\rho:(E_{0}^g,\mu)\times S\to (X,\lambda)$ and $\rho':(E_{0}^g,\mu)\times S\to (X',\lambda')$ are said to be \emph{isomorphic} if there exists an isomorphism $\alpha: (X,\lambda)\isoto (X',\lambda')$ of polarised abelian schemes such that $\rho'=\alpha\circ \rho$. 
\end{definition}

We now explain the connection between $\mathcal{X}_{\mu}$ and a suitable Lagrangian variety. The short exact sequence 
\[ \begin{tikzcd}
    0 \arrow[r] & \ker \mu  \arrow[r] & E_0^g \arrow[r, "\mu"] & E_0^{g,t} \arrow[r] & 0
\end{tikzcd}\]
induces the short exact sequence 
\[ \begin{tikzcd}
    0 \arrow[r] & (\ker \mu)^D  \arrow[r] & E_0^{g,tt}\simeq E_0^g \arrow[r, "\mu^t"] & E_0^{g,t} \arrow[r] & 0,
\end{tikzcd}\]
where $\mu^t$ is the dual morphism induced by $\mu$ by functoriality of dual abelian varieties.
It follows from $\mu^t=\mu$ that there is a natural isomorphism $\lambda:\ker \mu \isoto (\ker \mu)^D$ with $\lambda^D=-\lambda$, which induces the Weil pairing 
\[ e_\mu: \ker \mu \times \ker \mu \to \Gm. \]

Let ${\mathrm X} (\ker \mu, e_\mu)$ denote the projective scheme over $\F_{p^2}$ that represents the functor sending an $\F_{p^2}$-scheme $S$ to the set of $\alpha$-subgroups $H$ of rank $r$ over $S$ of $\ker \mu_{S}$ such that $e_\mu(H,H)=0$. Representibility follows from~\cite[Lemma 2.8]{li} and the fact that $e_\mu(H,H)=0$ is a closed condition. 
The $\alpha$-sheaf $\calA(\ker \mu)$, after identifying it with its global sections, is an $\F_{p^2}$-vector space of dimension $2r$ together with a non-degenerate alternating pairing $\psi_{\calA(\ker \mu)}$ induced by $e_\mu$. Let ${\rm L}(\calA(\ker \mu), \psi_{\calA(\ker \mu)})$ denote the Lagrangian variety associated to the symplectic space $(\calA(\ker \mu), \psi_{\calA(\ker \mu)})$.

\begin{proposition}\label{prop:rep_L}
    We have isomorphisms of moduli functors 
    \[ \calX_{\mu} \simeq {\rm X}(\ker \mu, e_\mu) \simeq {\rm L}(\calA(\ker \mu), \psi_{\calA(\ker \mu)}). \]
    In particular, the moduli functor $\calX_{\mu}$ is representable by a geometrically irreducible projective smooth scheme over $\F_{p^2}$, which is denoted again by $\calX_{\mu}$. 
\end{proposition}
\begin{proof}
    The first isomorphism sends each polarised isogeny $\rho:(E_{0}^g,\mu)\times S\to (X,\lambda)$ to $\ker \rho \subseteq \ker \mu_S$. The inverse map sends an $\alpha$-subgroup scheme $H\subseteq \ker \mu_S$ to the isogeny $\pi: E_0^g\times S\to X$, where $X:=(E_0^g\times S)/H$. The polarisation $\mu$ descends to a polarisation $\lambda$ on $X$ as $e_\mu(H,H)=0$; see~\cite{moretbailly}. This establishes the first isomorphism. 

    The second isomorphism sends $H\subseteq \ker \mu_S$ to $\calF:=\ker\left (\calA(\ker \mu)\otimes \calO_S \to \calA(H)\right)$. One has that $e_\mu(H,H)=0$ if and only if $\calF$ is a maximal isotropic $\calO_S$-submodule. This establishes the second isomorphism.  
\end{proof}

\begin{remark}
The moduli space~$\calX_\mu$ was first constructed by Moret-Bailly~\cite{moretbailly} for $g=2$ and was exploited by Katsura and Oort~\cite{katsuraoort:compos87} to study the moduli space of principally polarised abelian surfaces.
\end{remark}

Put $\calX_{\mu,k}:=\calX_{\mu}\otimes_{\F_{p^2}}k$ and identify $\calX_{\mu,k}$ with the set of $k$-points in $\calX_{\mu}$. 

We write $(M_1,\<\,,\>)$ and $(M_1^{\diamond}, \<\,,\>)$ for the quasi-polarised \dieu modules of $(E^g, \mu)$ and $(E_0^g,\mu)$, respectively. As $\ker \mu=\alpha_p^{2r} \subseteq E_0^g[F]$, the polarisation $\mu: E_0^g \to E_0^{g,t}$ yields a quotient 
\[ \ol M_1^\diamond:=M_1^\diamond/M_1^{t,\diamond}=M^*(\ker \mu),\] which is a vector space over $\F_{p^2}$ of dimension $2r$, and $p\<\, ,\>$ induces a non-degenerate alternating pairing on 
$\ol M_1^\diamond$:
 \begin{equation}
     \label{eq:psiM0}
     \psi_{\ol M_1^\diamond}: \ol M_1^\diamond \times \ol M_1^\diamond \to \F_{p^2}.
 \end{equation}
This yields a symplectic space $(\ol M_1^\diamond,\psi_{\ol M_1^\diamond})$ of dimension $2r$. 
For each geometric point $\rho:(E^g,\mu)\to (X,\lambda)$ in $\calX_\mu$, let $(M,\<\, ,\>)\subseteq (M_1,\<\, ,\>)$ be the corresponding chain of \dieu modules together with quasi-polarisations. 
Since $\lambda$ is principal and
the isogeny $\rho$ has degree~$p^r$, one has $\dim_k M_1/M=\dim_k M/M_1^t=r$. 
Moreover, since $M$ is self-dual with respect to the pairing $\<\,,\>$, the quotient $\ol M:=M/M_1^t$ is a maximal isotropic subspace of $\ol M_1= \ol M_{1,k}^{\diamond}$. Hence, starting with $\rho$ we obtain an element $\ol M$ in ${\rm L}(\ol M_1^\diamond,\psi_{\ol M_1^\diamond})$. 

\begin{lemma}\label{lem:Xmu}
    We have an isomorphism of algebraic varieties over $k$ 
    \[
    \begin{split}
    F_\mu: \calX_{\mu,k} & \to {\rm L}(\ol M_1^\diamond,\psi_{\ol M_1^\diamond})_k,\\ 
    (\rho:(E^g,\mu)\to (X,\lambda)) &\mapsto \ol M\subseteq \ol M_{1,k}^\diamond = \ol M_1,
    \end{split}
    \]
    which is defined over $\F_{p^2}$. Therefore, $F_\mu$ defines an isomorphism $\calX_{\mu}\isoto {\rm L}(\ol M_1^\diamond,\psi_{\ol M_1^\diamond})$ of algebraic varieties over $\F_{p^2}$.
\end{lemma}

\begin{proof}
    We show that the map $F_\mu$ is induced from the isomorphism 
    $\calX_{\mu} \simeq {\rm L}(\calA(\ker \mu), \psi_{\calA(\ker \mu)})$, and hence $F_\mu$ is an isomorphism that is defined over $\F_{p^2}$. 
    By Proposition~\ref{prop:rep_L}, it suffices to show that $\ol M_1^\diamond=\calA(\ker \mu)$ and that $\ol M=\ker (\calA(\ker \mu)\otimes k \to \calA(\ker \rho))$.

    By Equation~\eqref{eq:homsheaf}, we have 
    \begin{equation}\label{eq:MG=AG}
       \calA(\ker \mu)=\Hom_{\F_{p^2}}(\ker \mu, \alpha_p)=\Hom_{\F_{p^2}}(M^*(\alpha_p), M^*(\ker(\mu))=M^*(\ker \mu) =\ol M_1^\diamond. 
    \end{equation}
    Here $M^*(G)$ denotes the contravariant \dieu module of $G$.
    This shows the first equality. 

    Using the sequence
   \[ \begin{tikzcd}
        0 \arrow[r] & \ker \rho \arrow[r] & E^g \arrow[r, "\rho"] &  X  \arrow[r, "\rho^t "] &  E^{g,t}\arrow[r] & 0, 
    \end{tikzcd} \]
    we obtain a short exact sequence
    \[ \begin{tikzcd}
        0 \arrow[r] & \ol M \arrow[r] &  \ol M_1=M^*(\ker \mu)\otimes k \arrow[r] & M^*(\ker \rho)  \arrow[r] &  0.
    \end{tikzcd} \]
    Thus, we have $\ol M=\ker (\calA(\ker\mu)\otimes k \to \calA(\ker \rho))$ from Equation~\eqref{eq:MG=AG}. This shows the second equality.    
\end{proof}

\begin{corollary}\label{cor:prXmu}
We have a finite surjective morphism
\[ \pr: \coprod_{\mu\in P_r} \calX_{\mu} \to \calS_g^{\rm eo}.\]
Moreover, all irreducible components of $\calS_g^{\rm eo}$ are given by $\pr(\calX_{\mu})$, for some $\mu\in P_r$.
\end{corollary}
\begin{proof}
    This follows from Proposition~\ref{COH}, since the projection map  
\[ \pr:\calX_{\mu} \to \calS_g, \quad \rho\mapsto (X,\lambda) \]
factors through $\calS_g^{\rm eo}$. 
\end{proof}

\begin{remark}
    In fact, we can avoid introducing supersingular EO strata and redefine $\calS_g^{\rm eo}$ as the union of the images $\pr(\calX_{\mu})$ for $\mu\in P_r$, together with the induced reduced closed subscheme structure.     
\end{remark}

For each $0\le c\le r$, let $\calS_{g,\le c}^{\rm eo}\subseteq \calS_{g}^{\rm eo}$ be the closed subset consisting of all geometric points $(X,\lambda)$ such that there exist an element $\mu\in P_c$ and a polarised isogeny $\rho:(E^g,\mu)\to (X,\lambda)$. 
Let $\calS^{\rm eo}_{g, c} \subseteq \calS^{\rm eo}_{g,\le c}$ be the open dense subset consisting of objects $(X,\lambda)$ such that there exist a $\mu\in P_c$ and a \emph{minimal} isogeny $\rho:(E^g,\mu)\to (X,\lambda)$, as introduced in Subsection~\ref{sec:end_ssav}. In other words, $c$ is the smallest integer such that there exist a $\mu\in P_c$ and a polarised isogeny $\rho:(E^g,\mu)\to (X,\lambda)$ of degree $p^c$.
We have a decomposition
\[ \calS_g^{\rm eo}=\coprod_{0\le c \le r} \calS_{g,c}^{\rm eo}\]
where $\calS_{g,0}^{\rm eo}$ is the superspecial locus by Proposition~\ref{COH}. 

\subsection{A stratification on $\boldsymbol{\calS_g^{\rm eo}}$}\

Recall that $r = \lfloor g/2 \rfloor$. We fix a symplectic space $(V_0,\psi_0)$ of dimension $2r$ over $\F_{p^2}$. Let $\mathrm{L}(V_0,\psi_0)$ be the Langrangian variety associated to $(V_0,\psi_0)$. 
Any isomorphism $\beta:(\ol M_1^\diamond,\psi_{\ol M_1^\diamond})\isoto (V_0,\psi_0)$ gives rise to an isomorphism
\[ \beta_*:\calX_\mu \isoto \mathrm{L}(\ol M_1^\diamond, \psi_{\ol M_1^\diamond})\isoto \mathrm{L}(V_0,\psi_0)\]
which sends each $k$-point $\rho:(E^g,\mu)\to (X,\lambda)$ in $\mathcal{X}_{\mu}$ to the maximal isotropic $k$-subspace $\beta(\ol M)\subseteq V_{0,k}$.
Write $\mathrm{L} = {\rm L}(V_0,\psi_0)\otimes k$, define $\mathcal{E} := \{ \End(V_0,W): W \in \mathrm{L} \}$ as in Equation~\eqref{eq:calE} and consider its $\mathrm{Sp}(V_0)$-conjugacy classes as in Section~\ref{sec:symp}.\\

As a naive first attempt, for each $[E]\in [\calE]$, define
\[ \calX_{\mu,[E]}:=\{\rho\in \calX_{\mu}(k): \beta_*(\rho)\in \mathrm{L}_{[E]} \}, \quad \calX_{\mu,[E]}^0:=\{\rho\in \calX_{\mu}(k): \beta_*(\rho)\in (\mathrm{L}_{[E]})^0 \}, \]
where $({\mathrm L}_{[E]})^0$ is defined in Equation~\eqref{eq:LE}.
The strata $\calX_{\mu,[E]}$ and $\calX_{\mu,[E]}^0$ do not depend on the choice of the isomorphism $\beta$. We obtain a stratification
\begin{equation}\label{eq:XmuE}
    \coprod_{\mu\in P_r} \calX_\mu=\coprod_{[E]\in [\calE]} \coprod_{\mu\in P_r} \calX_{\mu,[E]}^0.
\end{equation}

However, this stratification does not descend to the moduli space $\calS_g^{\rm eo}$; the reason is that the relative endomorphism ring is an invariant of points $\rho$ in $\calX_{\mu}$ but not an invariant of $(X,\lambda)$. We modify the stratification as follows.\\

For each integer $1\le c\le r$, we fix a symplectic space $(V_c,\psi_c)$ of dimension $2c$ over $\F_{p^2}$. 

\begin{definition}
 As in Definition~\ref{def:Xmu}, for each $\mu_c\in P_c$ we define the moduli space $\calX_{\mu_c}$ over~$\F_{p^2}$ of polarised isogenies $\rho:(E^g,\mu_c)\to (X,\lambda)$ of degree $p^c$.   
 Let $\calX_{\mu_c}^d$ be the open subset consisting of minimal isogenies~$\rho$ in $\calX_{\mu_c}$. 
\end{definition}

Let $(M_1,\<\,,\>)$ be the \dieu module of $(E^g,\mu_c)$, and define $(\ol M_1^\diamond,\psi_{\ol M_1^\diamond})$ as in Equation~\eqref{eq:psiM0}, which is a symplectic space over $\F_{p^2}$ of dimension $2c$. 
The following lemma should be compared with Lemma~\ref{lem:Xmu}.
\begin{lemma}
    We have an isomorphism $\calX_{\mu_c}^d\simeq \mathrm{L}(\ol M_1^\diamond, \psi_{\ol M_1^\diamond})^{\rm d}$.
\end{lemma}
\begin{proof}
    Any $k$-point $\rho:(E^g,\mu_c)\to (X,\lambda)$ in $\calX_{\mu_c}$ gives an element $\ol M=M/M_1^t$ in $\mathrm{L}(\ol M_1^\diamond)$, where $(M,\<\,,\>)$ is the \dieu module of $(X,\lambda)$. If $M'$ is a \dieu module with $M_1^t\subseteq M'\subseteq M_1$, then the $k$-subspace $\ol M'=M'/M_1^t$ is defined over $\F_{p^2}$ if and only if $M'$ is superspecial, cf.~\cite[Lemma 6.1]{yu:docmath2006}. Thus, if $\wt M$ is the smallest superspecial \dieu module containing $M$, then $\wt M/M_1^t$ is the $\F_{p^2}$-hull of $\ol M$. Therefore, $\rho$ is the minimal isogeny if and only if $\wt M=M_1$, i.e. $\ol M$ is $\F_{p^2}$-dense. 
\end{proof}

We also have the projection map 
\[ \pr:\calX_{\mu_c}\to \calS_{g,\le c}^{\rm eo}. \]

\begin{definition}
Put \[ \calS_{g,c,\mu_c}^{\rm eo}:=\pr(\calX_{\mu_c})\cap \calS_{g,c}^{\rm eo}. \]
\end{definition}

The following lemma should be compared with Corollary~\ref{cor:prXmu}.
\begin{corollary}
    We have a finite surjective map
\begin{equation}\label{eq:pr}
    \pr: \coprod_{\mu_c\in P_c} \calX_{\mu_c}^d \to \calS_{g,c}^{\rm eo}=\coprod_{\mu_c\in P_c} \calS_{g,c,\mu_c}^{\rm eo},
\end{equation}
and $\calS_{g,c,\mu_c}^{\rm eo}$, for $\mu_c \in P_c$, are the connected and irreducible components of $\calS_{g,c}^{\rm eo}$.
\end{corollary}
\begin{proof}
If $\rho:(E^g,\mu_c)\to (X,\lambda)$ is the minimal isogeny, letting $\ol M=M/M_1^t\subseteq \ol M_1=\ol M^\diamond_{1,k}$, then $\ol M_1$ is the $\F_{p^2}$-hull of $\ol M$. This shows that the preimage of $\calS_{g,c,\mu_c}^{\rm eo}$ under $\pr$ is $\calX_{\mu_c}^d$. 
Note that $\calS^{\rm eo}_{g,c,\mu_c}\cap \calS^{\rm eo}_{g,c,\mu_c'}=\emptyset$ if $\mu_c\neq \mu_c'$, since $\mu_c$ is determined by the points in $\calS^{\rm eo}_{g,c,\mu_c}$.
\end{proof}

\begin{definition}
Choose an isomorphism $\beta:(\ol M_1^\diamond,\psi_{\ol M_1^\diamond})\isoto (V_c,\psi_c)$, which induces an isomorphism $\beta_*~:~\calX_{\mu_c}\isoto \mathrm{L}(V_c, \psi_c)$. Let 
\[ [\calE_{c}^d]:=\{[\End(V_c,W)]: W\in \mathrm{L}(V_c,\psi_c)^d \}.\] 
For each $[E]\in [\calE_c^d]$, define
\[ \calX_{\mu_c,[E]}^{d,0}:=\{ \rho\in \calX_{\mu_c}^{d}(k):\beta_*(\rho)\in ((\mathrm{L}(V_c,\psi_c) \otimes k)^d_{[E]})^0 \}. \]
Then we have 
\[ \coprod_{\mu_c\in P_c} \calX_{\mu_c}^{d}=\coprod_{\mu_c\in P_c} \coprod_{[E]\in [\calE_c^d]} \calX_{\mu_c,[E]}^{{d},0}. \]
For $c=0$, we set $\calX_{\mu_0}=\{(E^g,\mu_0)\}$, $[\calE^d_0]=\{[0]\}$, and $\calX_{\mu_0,[0]}^{d,0}=\calX_{\mu_0}$. 
\end{definition}

For each integer $c$ with $0\le c\le r$, we have a finite surjective map induced by Equation~\eqref{eq:pr}:
\begin{equation}
    \label{eq:XmucE}
    \mathrm{pr}: \coprod_{\mu_c\in P_c} \calX_{\mu_c}^{d}=\coprod_{\mu_c\in P_c} \coprod_{[E]\in [\calE_c^d]} \calX_{\mu_c,[E]}^{{d},0}\to \calS_{g,c}^{\rm eo}=\coprod_{\mu_c\in P_c} \calS_{g,c,\mu_c}^{\rm eo}.
\end{equation}

\begin{lemma}
The $\Sp(\ol M_1^\diamond)$-conjugacy class $[\End(\ol M_1^\diamond, \ol M)]$ is a well-defined invariant for the isomorphism classes $[X,\lambda]$ in $\calS_{g,c,\mu_c}^{\rm eo}$.    
\end{lemma}
\begin{proof}
Let $(X,\lambda)$ and $(X',\lambda')$ be two objects in $\calS_{g,c,\mu_c}^{\rm eo}$ such that there is an isomorphism $\alpha:(X,\lambda)\to (X',\lambda')$. 
Let $(E^g,\mu_c)\to (X,\lambda)$ and $(E^g,\mu_c)\to (X',\lambda')$ be the respective minimial isogenies.
With the notation above, we have two members $\ol M=M/M^t_1$ and $\ol M'=M'/M^t_1$ in $\mathrm{L}(\ol M_1^\diamond)$, where $M$ and $M'$ are the respective \dieu modules of $X$ and $X'$. The isomorphism $\alpha$ lifts to an automorphism $\wt \alpha\in \Aut(E^g, \mu_c)$, which induces an element $\ol \alpha\in \Sp(\ol M_1^\diamond)$ such that $\ol \alpha(\ol M')=\ol M$. This shows that the endomorphism algebras $\End(\ol M_1^\diamond,\ol M)$ and $\End(\ol M_1^\diamond, \ol M')$ are $\Sp(\ol M_1^\diamond)$-conjugate. 
\end{proof}

\begin{definition}
Set 
\[ \calS_{g,c,\mu_c,[E]}^{{\rm eo},0}:=\pr(\calX_{\mu_c,[E]}^{{d},0}).\]
Then we obtain a stratification 
\begin{equation}\label{eq:st_Sgeo}
\calS_{g}^{\rm eo}=\coprod_{0\le c\le r} \coprod_{[E]\in [\calE_c^d]}\calS_{g,c,[E]}^{{\rm eo},0}, \quad  
\calS_{g,c,[E]}^{{\rm eo},0}:=\coprod_{\mu_c\in P_c}\calS_{g,c,\mu_c,[E]}^{{\rm eo},0}.
\end{equation}
\end{definition}

\begin{lemma}
    \label{lm:Vp}
Let $D_p$ be the division qutaternion $\Qp$-algebra, and $O_p$ the unique maximal order. Let $\Pi$ be a uniformiser of $O_p$. For integers $n\ge 1$ and $s\ge 1$, set 
\[ V_{p,s}:=1+\Pi^s \Mat_n(O_p)\subseteq \GL_n(O_p).\] Then $V_{p,s}$ 
is torsion-free if and only if either 
(i) $s\ge 3$, (ii) $p\ge 3$ and $s=2$, or (iii) $p\ge 5$ and $s=1$. 
\end{lemma}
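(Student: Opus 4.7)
The plan is to prove torsion-freeness in the three good cases by separately ruling out elements of each prime order, and to exhibit explicit torsion in each excluded case. Any element of finite order $m>1$ has a power of prime order dividing $m$, so it suffices to decide which prime orders can occur in $V_{p,s}$.

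Write any $g\in V_{p,s}\setminus\{1\}$ as $g = 1+\Pi^t y$ with $t := v_\Pi(g-1)\ge s$ and $y\in \Mat_n(O_p)$ having at least one entry of $\Pi$-adic valuation $0$. For a prime $\ell\ne p$, expand
\[ g^\ell - 1 \;=\; \ell\,\Pi^t y \;+\; \sum_{k=2}^{\ell}\binom{\ell}{k}(\Pi^t y)^k. \]
The leading term has $\Pi$-valuation exactly $t$ (since $\ell\in O_p^\times$), while every later term has valuation $\ge 2t>t$. Hence $g^\ell\ne 1$, so $V_{p,s}$ contains no prime-to-$p$ torsion, irrespective of $p$ and $s$.

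The decisive step is to rule out order-$p$ elements. Using $v_\Pi(p)=2$, expand
\[ g^p - 1 \;=\; p\,\Pi^t y \;+\; \sum_{k=2}^{p-1}\binom{p}{k}(\Pi^t y)^k \;+\; (\Pi^t y)^p; \]
the three groups of terms have $\Pi$-valuations equal to $2+t$, at least $2+2t$, and at least $pt$, respectively. The first strictly dominates iff $2+t<\min(2+2t,\,pt)$, i.e.\ iff $(p-1)t>2$, in which case $g^p\ne 1$. Since $t\ge s$, the sufficient condition $(p-1)s>2$ is exactly the disjunction of cases (i), (ii), (iii). If some $g\in V_{p,s}$ had order $p^k$ with $k\ge 2$, then $g^{p^{k-1}}\in V_{p,s}$ would have order $p$, which we have just excluded; combined with the previous paragraph, this establishes torsion-freeness in the good cases.

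For the remaining cases I would exhibit torsion directly. The scalar $-I_n$ lies in $V_{2,s}$ for $s\le 2$, because $-2 = \Pi^2\cdot u$ for a unit $u\in O_p^\times$, and $(-I_n)^2 = I_n$ gives an element of order $2$. For $p=3,\;s=1$, the ramified quadratic subfield $\Qp(\Pi)=\Qp(\sqrt{-3})\hookrightarrow D_p$ contains a primitive cube root of unity $\omega = (-1+\Pi)/2$; since $2\in O_p^\times$ and $v_\Pi(3)=2$, a direct computation gives $v_\Pi(\omega - 1) = 1$, so $\omega I_n\in V_{3,1}$ has order $3$. The main obstacle is the borderline equality $(p-1)t=2$, corresponding to $(p,t)\in\{(2,2),(3,1)\}$, where the dominant-term comparison collapses and genuine torsion does appear: these are precisely the excluded cases, so the inequality $(p-1)s>2$ is sharp and no case is lost.
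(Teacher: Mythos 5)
Your proof is correct, and the central mechanism---estimating $\Pi$-adic valuations in the binomial expansion of $g^p-1$ using $v_\Pi(p)=2$ so that the linear term $p\Pi^t y$ (valuation $2+t$) dominates both the middle terms (valuation $\ge 2+2t$) and the tail $(\Pi^t y)^p$ (valuation $\ge pt$) exactly when $(p-1)t>2$---is the same idea the paper uses. Where you differ is in packaging: the paper handles case (iii) ($s=1$, $p\ge 5$) and the necessity of $p\ge 5$ when $s=1$ by citing Lemma~6.2 and Remark~6.3 of \cite{karemaker-yobuko-yu}, and only runs the valuation argument for $s\ge 2$; you instead extract the clean unified threshold $(p-1)s>2$, which covers all three sufficient cases in one stroke, and you supply explicit torsion witnesses ($-I_n$ for $(p,s)\in\{(2,1),(2,2)\}$, and $\omega I_n$ with $\omega=(-1+\Pi)/2$ for $(p,s)=(3,1)$, noting $v_\Pi(\omega-1)=1$ since $v_\Pi(3)=2$, $v_\Pi(\Pi)=1$) rather than citing. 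Your reduction to prime order and the explicit exclusion of prime-to-$p$ torsion (the leading term $\ell\Pi^t y$ has valuation exactly $t$ since $\ell\in O_p^\times$) replaces the paper's appeal to $V_{p,s}$ being pro-$p$ plus induction. The net effect is a self-contained argument that also makes the sharpness of the inequality transparent at the borderline pairs $(p,t)\in\{(2,2),(3,1)\}$.
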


\begin{proof}
We first remark that the conditions $p\ge 3$ when $s=2$, and $p\ge 5$ when $s=1$ are necessary: one has $-1\in V_{p,s}$ for $p=2$ and $s=2$, and see \cite[Remark 6.3]{karemaker-yobuko-yu} for $s=1$. 

For simplicity, write $(\Pi)$ for the two-sided ideal in $\Mat_n(O_p)$
generated by $\Pi$. Case (iii) has been proven in \cite[Lemma 6.2]{karemaker-yobuko-yu}; we now show the cases (i) and (ii).

  We must show that any $\alpha\in V_{p,s}$ of finite order  must equal $1$. 
  Since $V_{p,s}$ is a pro-$p$ group, we have $\alpha^{p^r}=1$ for some
$r\geq 1$. By induction, we may assume that $\alpha^p=1$. Suppose that
$\alpha\neq 1$ and write
$\alpha=1+\Pi^{s_0} \beta$ for some $\beta \in\Mat_n(O_p)$ with $\beta \not \in  (\Pi)$.  Since $p\mid \binom{p}{i}$ for all $1\leq
i\leq p-1$, we find
\begin{equation}
  \label{eq:1}
1=\sum_{i=0}^p \binom{p}{i}(\Pi^{s_0} \beta)^i\equiv \begin{cases}
   1+p\Pi^{s_0} \beta
  \pmod{\Pi^{2s_0}} & \text{if $s_0\ge 3$;} \\
   1+p \Pi^2 \beta \pmod{\Pi^6} & \text{if $s_0=2$ and $p\ge 3$.} 
\end{cases} 
\end{equation}
In both cases this implies that $\beta\in (\Pi)$, which leads to a contradiction. 
\end{proof}

Theorem~\ref{thm:main} now follows from the following result. 

\begin{theorem}\label{thm:A1}
   \begin{enumerate}
       \item The stratum $\calS_{g,r,[\F_{p^2}]}^{{\rm eo},0}$ is nonempty, and is open and dense in $\calS_{g}^{{\rm eo}}$.
       \item If $g$ is even and $p\ge 5$, then for any polarised supersingular abelian variety $(X,\lambda)$ in $\calS_{g,r,[\F_{p^2}]}^{{\rm eo},0}(k)$, we have $\Aut(X,\lambda)=\{\pm 1\}$. 
   \end{enumerate} 
\end{theorem}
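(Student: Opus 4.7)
The plan is to handle the two assertions in turn, reducing the second to the torsion-freeness of a principal congruence subgroup.

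For part~(1), for each $\mu_r\in P_r$ fix a symplectic isomorphism $\beta:(\ol M_1^\diamond,\psi_{\ol M_1^\diamond})\isoto (V_r,\psi_r)$, yielding $\beta_*:\calX_{\mu_r}^d\isoto \mathrm{L}(V_r,\psi_r)^{\rm d}\otimes k$. Since $k$ is algebraically closed of characteristic~$p$, the extension $k/\F_{p^2}$ is infinite, and Theorem~\ref{thm:L:nonempty} supplies a non-empty locus $((\mathrm{L}(V_r,\psi_r)\otimes k)^d_{[\F_{p^2}]})^0$, so $\calX_{\mu_r,[\F_{p^2}]}^{d,0}\neq \emptyset$. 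Openness and density in $\mathrm{L}(V_r,\psi_r)\otimes k$ then follow because $\F_{p^2}$ is finite: there are only finitely many $\F_{p^2}$-subalgebras of $\End(V_r)$, hence finitely many $\Sp(V_r)$-conjugacy classes $[E]$, and each $\mathrm{L}_{[E]}$ is a finite union of closed subvarieties $\mathrm{L}_{gEg^{-1}}$. For $E\supsetneq \F_{p^2}$ such a stratum is properly contained in $\mathrm{L}$, as points with endomorphism algebra exactly $\F_{p^2}$ (furnished again by Theorem~\ref{thm:L:nonempty}) avoid it. Applying the finite surjective projection $\pr$ transports openness and density to $\calS_{g,r,[\F_{p^2}]}^{{\rm eo},0}\subseteq \calS_{g,r}^{{\rm eo}}$, and since $S_{\varphi_{\max}}\subseteq \calS_{g,r}^{{\rm eo}}$ by Proposition~\ref{COH}, density extends to $\calS_g^{{\rm eo}}=\ol{S_{\varphi_{\max}}}$.

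For part~(2), fix $(X,\lambda)\in \calS_{g,r,[\F_{p^2}]}^{{\rm eo},0}(k)$ and its minimal polarised isogeny $\varphi:(\wt X,\wt \lambda)\to (X,\lambda)$, with $\wt X=E^g$ and $\ker\wt\lambda\simeq\alpha_p^{g}$ (since $2c=2r=g$). Writing $\ul M\subseteq \ul{\wt M}$ for the associated quasi-polarised Dieudonn\'e modules, Section~\ref{sec:auto} gives
\[
\Aut(X,\lambda)\subseteq \Aut_{\rm DM}(\ul M)=m^{-1}\bigl(\Sp(V_0^{\rm min},\ol M)\bigr),
\]
with $m:\Aut_{\rm DM}(\ul{\wt M})\twoheadrightarrow \Sp(V_0^{\rm min})$ the surjection of Lemma~\ref{lm:surj}. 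The hypothesis $\End(V_0^{\rm min},\ol M)=\F_{p^2}$ together with Equation~\eqref{eq:SpVoW} forces $\Sp(V_0^{\rm min},\ol M)$ to consist of scalars $a\cdot\Id$ with $a^2=1$, hence $\Sp(V_0^{\rm min},\ol M)=\{\pm 1\}$. Since $\Aut(X,\lambda)$ is finite and already contains $\pm 1$, it suffices to show that $\ker m$ is torsion-free: any torsion $\alpha\in m^{-1}(\{\pm 1\})$ then has $\pm\alpha\in \ker m$ torsion, forcing $\alpha=\pm 1$. Under the identification $\End(E^g)\otimes \Zp=\Mat_g(O_p)$, the skeleton $\wt M^\diamond$ becomes $O_p^g$, and the polarisation type $\alpha_p^{g}$ with $c=r=g/2$ forces $\wt M^{t,\diamond}=\Pi O_p^g$, so $V_0^{\rm min}\simeq \F_{p^2}^g$ and $m$ is reduction modulo~$\Pi$. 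Thus $\ker m\subseteq V_{p,1}=1+\Pi \Mat_g(O_p)$, which is torsion-free for $p\ge 5$ by Lemma~\ref{lm:Vp}(iii), finishing the proof.

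The delicate step is the identification $V_0^{\rm min}\simeq O_p^g/\Pi O_p^g$ with $m$ realised as reduction modulo $\Pi$; once this is in hand, Lemma~\ref{lm:Vp}(iii) closes the argument at once. Executing it amounts to aligning the Bruhat--Tits parahoric quotient of Lemma~\ref{lm:surj} with the classical principal congruence subgroup, via a concrete determination of $\wt M^{t,\diamond}$ as a sublattice of $O_p^g$ dictated by the polarisation type. Both hypotheses of the theorem enter at this step: the parity of $g$ forces $2c=g$ and thereby gives the clean identification $V_0^{\rm min}=O_p^g/\Pi O_p^g$, while $p\ge 5$ is exactly what is needed for $V_{p,1}$ to be torsion-free, matching the exclusions in Remark~\ref{rem:goddp2}.
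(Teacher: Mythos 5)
Your proof is correct and follows essentially the same route as the paper: part~(1) invokes Theorem~\ref{thm:L:nonempty} on the Lagrangian cover $\calX_\mu$ and pushes forward along the finite surjective projection $\pr$; part~(2) reduces to the quotient map $m_\sfV:\Aut_{\rm DM}(\ul{\wt M})\to \Sp(V_0^{\rm min})$ and uses that $\ker m_\sfV=1+\Pi\Mat_g(O_p)$ is torsion-free for $p\ge 5$ by Lemma~\ref{lm:Vp}. Your ``any torsion $\alpha\in m^{-1}(\{\pm1\})$ has $\pm\alpha\in\ker m$ torsion'' formulation is an equivalent rephrasing of the paper's ``$m_\sfV$ restricted to $\Aut(X,\lambda)$ is injective with image in $\{\pm1\}$,'' and you supply somewhat more detail than the paper on two points: the open-and-dense claim (via finiteness of $\F_{p^2}$ and hence of $[\calE]$) and the explicit identification of $V_0^{\rm min}\simeq O_p^g/\Pi O_p^g$ when $g$ is even.
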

\begin{proof}
\begin{enumerate}
    \item It follows from Theorem~\ref{thm:L:nonempty} that for each $\mu\in P_r$, $\calX_{\mu,[\F_{p^2}]}^0$ is nonempty, and moreover open and dense in $\calX_{\mu}$. As the projection map $\pr: \coprod_{\mu\in P_r} \calX_\mu \to \calS_g^{\rm eo}$ is finite and surjective, the statement follows. 

    \item With the notation of Subsection~\ref{sec:auto}, for the minimal isogeny of $(X,\lambda)$ we have $(\wt X,\wt \lambda) \simeq (E^g,\mu)$ for some $\mu\in P_r$. Since $g$ is even, $\ker \lambda=\wt X[F]$ and hence $\wt M^t =\sfV \wt M$. We have 
    \[ \Aut(X,\lambda)\subseteq \Aut_{\rm DM}(\wt M,p\<\,,\>) \xrightarrow{m_\sfV} \Sp(V_0^{\rm min}), \]
    where $m_\sfV$ is given by the reduction modulo $\Pi$: this also maps $\Aut_{\rm DM}(\wt M)=\GL_g(O_p)\to \GL(V_0^{\rm min})=\GL_g(\F_{p^2})$. Since $p\ge 5$, $\ker m_\sfV=1+\Pi \Mat_g(O_p)$ is torsion-free by Lemma~\ref{lm:Vp}. Therefore, the restriction map $m_\sfV:\Aut(X,\lambda)\to \Sp(V_0^{\rm min})$ is injective. 
    The image $m_\sfV(\Aut(X,\lambda))$ is contained in 
    \[ \Sp(V_0^{\rm min}, \ol M)=\{\alpha\in \F_{p^2}: \alpha^\dagger \alpha=1\}=\{\pm 1\}, \]
    cf.~Equation~\eqref{eq:SpVoW}. Thus, $\Aut(X,\lambda)=\{\pm 1\}$.
\end{enumerate}   
\end{proof}

\begin{remark}\label{rem:goddp2}
    Theorem 6.4.(2) fails if either $g\ge 3$ is odd, or $p=2$. For odd $g\ge 3$, we take $\mu=(\mu',\lambda_E)\in P_r$ on $E^g=E^{g-1}\times E$, where $\mu'$ is a polarisation on $E^{g-1}$ with $\ker \mu'\simeq \alpha_p^{2r}$, and $\lambda_E$ is the canonical principal polarisation on $E$. Then any isogeny $\rho:(E^g,\mu)\to (X,\lambda)$ in $\calX_\mu$ is a product isogeny $\rho=(\rho',{\rm id}_E): (E^{g-1},\mu')\times (E,\lambda_E) \to (X,\lambda)=(X',\lambda')\times (E,\lambda_E)$. Therefore, $\Aut(X,\lambda)$ cannot be $\{\pm 1\}$. 

    For $p=2$, the case $g=2$ in \cite{ibukiyama} already gives a counterexample. 
\end{remark}

Using the stratification constructed in Equation~\eqref{eq:st_Sgeo}, we give a concrete description of the mass function on the supersingular EO locus $\calS_g^{\rm eo}$ for arbitrary $g$. Note that for $x=(X,\lambda)\in \calS_g(k)$, the central leaf $\calC(x)$ passing through $x$ is $\Lambda_x$, as for any prime $\ell\neq p$, any two principally polarised $\ell$-divisible groups $(X',\lambda')[\ell^\infty]$ and $(X,\lambda)[\ell^\infty]$ are isomorphic.  

\begin{theorem}
    \label{thm:mass}
    The function $\Mass: \calS_{g}^{\rm eo}\to \Q$, sending $x$ to
    $\sum_{(X',\lambda')\in \calC(x)} |\Aut(X',\lambda')|^{-1} =: \Mass(x)$, is constant on $\calS_{g,c,[E]}^{\rm eo}$ with value
    \[ \prod_{i=1}^g \frac{|\zeta(1-2i)|}{2} \cdot [\Sp(V_c):E^1] \cdot L_{p,p^c},\]
where $E^1:=\{\alpha\in E^\times: \alpha^\dagger \alpha=1\}$. Here we set $[\Sp(V_c):E^1]:=1$ when $g=1$.  

In particular, each non-empty fibre of the function $\Mass$ is a union of locally closed subsets. 
\end{theorem}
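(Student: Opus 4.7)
The plan is to combine Proposition~\ref{prop:mass_Lambdax} with the definition of the stratification in Equation~\eqref{eq:st_Sgeo}, and then to match the local index factor appearing in that proposition with the group-theoretic quantity $[\Sp(V_c):E^1]$. The first observation is that for any $x=(X,\lambda)$ in $\calS_g^{\rm eo}(k)$ with minimal isogeny $(\wt X,\wt\lambda)\simeq (E^g,\mu_c)$ for some $\mu_c\in P_c$, the kernel $\ker\wt\lambda\simeq \alpha_p^{2c}$ is annihilated by Frobenius, so $\ker\wt\lambda\subseteq \wt X[F]$ and the hypothesis of Proposition~\ref{prop:mass_Lambdax} is satisfied. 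Combining this with the equality $\calC(x)=\Lambda_x$ recalled just before the theorem, I obtain
\[ \Mass(x)=\prod_{i=1}^g\frac{|\zeta(1-2i)|}{2}\cdot[\Sp(V_0^{\rm min}):\Sp(V_0^{\rm min},\ol M)]\cdot L_{p,p^c}. \]

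The second step is to show that the middle index only depends on the $\Sp$-conjugacy class of $\End(V_c,\beta_*(\ol M))$ and equals $[\Sp(V_c):E^1]$. Unwinding the definitions of Subsection~\ref{sec:auto}, the symplectic space $(V_0^{\rm min},\psi_0^{\rm min})$ literally coincides with $(\ol M_1^\diamond,\psi_{\ol M_1^\diamond})$ from Equation~\eqref{eq:psiM0}, and $\ol M=M/M_1^t$ is its distinguished isotropic subspace. Transporting along the chosen isomorphism $\beta:(\ol M_1^\diamond,\psi_{\ol M_1^\diamond})\isoto(V_c,\psi_c)$ identifies this pair with $(V_c,\beta_*(\ol M))$. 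By the very definition of $\calS_{g,c,[E]}^{{\rm eo},0}$, the algebra $\End(V_c,\beta_*(\ol M))$ lies in the $\Sp(V_c)$-conjugacy class $[E]$. Equation~\eqref{eq:SpVoW} then gives $\Sp(V_0^{\rm min},\ol M)=\End(V_0^{\rm min},\ol M)^1$, which, since the $(-)^1$ operation is $\Sp(V_c)$-equivariant, is $\Sp(V_c)$-conjugate to $E^1$; the two indices therefore agree. Substituting yields the claimed formula, and for $g=1$ the convention $[\Sp(V_c):E^1]:=1$ recovers the classical Eichler mass formula.

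For the final assertion, since $k_0=\F_{p^2}$ is finite, the discussion following Lemma~\ref{lm:end_basic} ensures that each $[\calE_c^d]$ is finite and each stratum $((\mathrm{L}(V_c,\psi_c)\otimes k)_{[E']})^0$ is locally closed. Hence $\calX_{\mu_c,[E]}^{d,0}$ is locally closed in $\calX_{\mu_c}$, and its image $\calS_{g,c,[E]}^{{\rm eo},0}$ under the finite surjection $\pr$ is constructible. A non-empty fibre of $\Mass$ is then a finite union of such constructible strata, and therefore a finite union of locally closed subsets. The only real content is the second step; everything else is an unwinding of definitions and a reduction to Proposition~\ref{prop:mass_Lambdax}. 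The main obstacle, such as it is, lies in verifying that the index $[\Sp(V_0^{\rm min}):\Sp(V_0^{\rm min},\ol M)]$ is genuinely a function of the conjugacy class $[E]$ rather than of the point $x$ itself, which is precisely what the stratification by the relative endomorphism algebra was designed to encode.
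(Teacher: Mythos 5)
Your proposal is correct and follows essentially the same route as the paper, which simply says the theorem ``follows immediately from Proposition~\ref{prop:mass_Lambdax}''; you have just spelled out the unwinding in detail. The verification that $\ker\wt\lambda\simeq\alpha_p^{2c}\subseteq\wt X[F]$, the identification of $(V_0^{\rm min},\psi_0^{\rm min})$ with $(\ol M_1^\diamond,\psi_{\ol M_1^\diamond})$, the observation that conjugation by $\Sp(V_c)$ commutes with $(-)^1$ (because $\gamma^\dagger=\gamma^{-1}$ and $\End(V_c,W)$ is $\dagger$-stable), and the finiteness of $[\calE_c^d]$ over $\F_{p^2}$ are exactly the points that make the ``immediate'' deduction go through, and you have checked each of them correctly.
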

\begin{proof}
This follows immediately from Proposition~\ref{prop:mass_Lambdax}.
\end{proof}

\begin{remark}
    In~\cite{karemaker-yobuko-yu} the authors gave explicit mass formulae and a concrete description for the mass strata for $g=3$.  Theorem~\ref{thm:mass} provides a conceptual understanding of the interplay between the mass function and the geometry on the supersingular EO locus. Then through the stratification defined in Equation~\eqref{eq:st_Sgeo}, one sees a more direct relationship between the arithmetic and geometry on supersingular strata.
\end{remark}

\begin{remark}
    In~\cite{chen-viehmann} Chen and Viehmann introduce a new stratification on affine Deligne-Lusztig varieties (ADLVs) $X_\mu(b)$, called the $J$-stratification. They construct, for each element $g$ in $X_\mu(b)$, a function $f_g$ from the 
    $\sigma$-centraliser $J_b(\breve \Q_p)$ of $b\in G(\breve \Qp)$ to the set $X_*(T)_{\rm dom}$ of dominant co-characters of $G$, where $\breve \Qp$ is the completion of the maximal unramified extension of $\Qp$. Then the \emph{$J$-strata} of $X_\mu(b)$ are defined to be the fibres of the map
    \[ X_\mu(b) \longrightarrow \{f_g: g\in X_\mu(b)\}.\]
    The authors of \cite{chen-viehmann} compare the $J$-stratification with various  stratifications studied previously: the Bruhat-Tits stratification introduced by Vollaard and Wedhorn~\cite{VW}, the semi-module stratification introduced by de Jong and Oort~\cite{dejong-oort}, and the locus with $a$-number one. 

    For the special case where $G=\GSp_{4}$, $\mu=(1,1,0,0)$ and $[b]\in B(G,\mu)$ is the supersingular $\sigma$-conjugacy class, $X_\mu(b)$ is the ADLV (or the special fibre of the Rapoport-Zink space) associated to the moduli space $\calS_2$ of principally polarised abelian surfaces. In this case there are two $J$-strata of $X_\mu(b)$: the superspecial locus and its complement; see~\cite[Proposition~4.3]{chen-viehmann}. 

    We compare the $J$-stratification with our stratification on $X_\mu(b)$ in this special case. Each irreducible component $X$ of $X_\mu(b)$ is isomorphic to $\calX_\mu=\bbP^1$ up to perfection. Using our stratification indexed by $[\calE]$, the moduli space $\calX_\mu$ decomposes into three pieces:
    \[ \bbP^1(\F_{p^2})\ \text{(the superspecial locus)}, \quad \bbP^1(\F_{p^4})\setminus \bbP^1(\F_{p^2}), \quad \bbP^1(k)\setminus \bbP^1(\F_{p^4}),\]
    and hence it refines the $J$-stratification.

    For general $g\ge 2$, let $G=\GSp_{2g}$, $\mu=(1^g,0^g)$ and $[b]$ be the supersingular $\sigma$-conjugacy class, and let
    \[ \Theta: X_\mu(b) \longrightarrow \calS_g^{\rm pf} \longrightarrow \calS_g \]
    be the map induced by the ADLV incarnation of the Rapoport--Zink uniformisation~(cf.~\cite[Theorem 6.21]{RZ} and~\cite[p.~4]{HZZ}), where $\calS_g^{\rm pf}$ is the perfection of $\calS_g$. Let $X_\mu(b)^{\rm eo}:=\Theta^{-1}(\calS_g^{\rm eo})$ be the preimage of $\calS_g^{\rm eo}$. Then each irreducible component $X$ of $X_\mu(b)^{\rm eo}$ is isomorphic to $\calX_\mu$ up to perfection. 
    We expect that our stratification refines the $J$-stratification on $X\simeq \calX_\mu^{\rm pf}$. 
\end{remark}

\subsection{$\boldsymbol{\ell}$-adic Hecke correspondences on the supersingular locus}\ 

Our references for the exposition of $\ell$-adic Hecke correspondences are \cite{chai:mono,yu:crelle}.
We choose a projective system of
primitive prime-to-$p$-th roots of unity
$\zeta=(\zeta_m)_{(m,p)=1}\subseteq 
\Qbar\subseteq \C$. We also fix an algebraic closure $\overline{\Q}_p$ of $\Qp$ and an embedding $\Qbar \hookrightarrow
\overline{\mathbb{Q}}_p$. 
For any prime-to-$p$ integer $m\ge 1$ and any connected
$\Fpbar$-scheme $S$, the choice $\zeta$ determines an
isomorphism  
$\zeta_m:\Z/m\Z\isoto \mu_m(S)$, or equivalently, a $\pi_1(S,\bar
s)$-invariant $(1+m\wh \Z^{(p)})^\times$-orbit of isomorphisms $\bar
\zeta_m: \wh \Z^{(p)}\to \wh \Z^{(p)}(1)_{\bar s}$, where $\wh
\Z^{(p)}:=\prod_{\ell\neq p}\wh \Z_\ell$ and $\bar s$ is a geometric
point of $S$.

Let $(V,\psi)$ be a symplectic space over $\Q$ of dimension $2g$, and $V_\Z\subseteq V$ be a self-dual $\Z$-lattice.
Let $G$ be the automorphism group scheme over $\Z$ associated to the symplectic $\Z$-lattice
$(V_\Z,\psi)$; that is, for any commutative ring $R$, the group of
its $R$-valued points is defined by
\begin{equation}\label{eq:601}
  G(R):=\{g\in \GL(V_R)\,;\,
\psi(g(x),g(y))=\psi(x,y), \ \forall\, x, y\in V_R \,
\}, \quad V_R=V_\Z\otimes_\Z R.
\end{equation}

Let $n\ge 3$ be a prime-to-$p$ positive integer and $\ell$ be a prime
with $(\ell,pn)=1$. 
Let $m\ge 0$ be a non-negative integer. Let $U_{n\ell^m}$ be the
kernel of the reduction map  $G(\wh \Z^{(p)})\to G(\wh
\Z^{(p)}/n\ell^m\wh \Z^{(p)})$; this is an open 
compact subgroup of $G(\wh \Z^{(p)})$.

For a $g$-dimensional polarised abelian scheme $(X,\lambda)$ with $p$-power polarisation degree over a connected locally Noetherian $\Fpbar$-scheme $S$, 
a {\it level-$U_{n\ell^m}$ structure} on $(X,\lambda)$ is 
a $\pi_1(S,\bar s)$-invariant
  $U_{n\ell^m}$-orbit $[\eta]_{U_{n\ell^m}}$ in ${\rm Isom}(V_\Z \otimes \wh\Z^{(p)}, T^{(p)}(X_{\bar s}))/U_{n\ell^m}$ of isomorphisms 
  \begin{equation}
    \label{eq:611}
    \eta: V_\Z\otimes \wh\Z^{(p)}
  \isoto T^{(p)}(X_{\bar s}):=\prod_{p'\neq p} T_{p'}(X_{\bar s}),
  \end{equation}
where $T_{p'}(X_{\bar s})$ is the $p'$-adic Tate module of $X_{\bar s}$, such that 
  \begin{equation}
    \label{eq:612}
   e_\lambda(\eta(x),\eta(y))=\bar \zeta_{n\ell^m}(\psi(x,y))\
  \quad ({\rm mod}\ (1+m\wh \Z^{(p)})^\times), \quad 
  \forall\, x, y\in  V_\Z\otimes \wh\Z^{(p)}, 
  \end{equation}
where $e_\lambda$ is the Weil pairing induced by the polarisation
$\lambda$.

Let 
${\calA}_{g,n\ell^m}$ be the moduli space over
$\Fpbar$   
that parametrises isomorphism classes of $g$-dimensional principally polarised abelian varieties 
$(X,\lambda,[\eta]_{U_{n\ell^m}})$ with level-$U_{n\ell^m}$ structure. 
For integers $0 \le m \le m'$, we have a natural finite morphism $\pi_{m,m'}:\calA_{g,n\ell^{m'}}\to
\calA_{g, n\ell^m}$,  
sending $(X,\lambda,\iota,[\eta]_{U_{n\ell^{m'}}})$ to $
(X,\lambda,\iota,[ \eta]_{U_{n\ell^{m}}})$. 
Let $\wt \calA_{g,n}:=(\calA_{g, n\ell^m})_{m\ge 0}$ be the projective system. The natural projection $\wt \calA_{g,n}\to \calA_{g,n}$ forms a $G(\Z_\ell)$-torsor. 
The (right) action of
$G(\Z_\ell)$ on $\wt \calA_{g,n} $ extends uniquely to a continuous 
action of $G(\Q_\ell)$. Descending
from $\wt \calA_{g,n}$ to $\calA_{g,n}$, elements of  $G(\Q_\ell)$
induce algebraic correspondences on $\calA_{g,n}$, known as the
$\ell$-adic Hecke correspondences on $\calA_{g,n}$. 

More precisely, to each
$u\in G(\Q_\ell)$ we associate an $\ell$-adic Hecke correspondence
$(\calH_u,\pr_1,\pr_2)$ as follows. 
Extending the isomorphisms $\eta$ from~\eqref{eq:611} to isomorphisms
\[ \eta_\Q: V\otimes \A^{(p)}_f\to V^{(p)}(X):=T^{(p)}(X)\otimes
\A^{(p)}_f,\]
where $\A^{(p)}_f$ is the prime-to-$p$ finite adele ring of $\Q$,
we see that a class $[\eta]_{U_n}$ gives rise to a class
$[\eta_\Q]_{U_n}$ in ${\rm Isom}(V\otimes \A^{(p)}_f, V^{(p)}(X))/U_n$, 
and that $[\eta]_{U_n}$ is determined by $[\eta_\Q]_{U_n}$.
We have $u^{-1} (U_{n\ell^m}) u \subseteq U_n$ for some $m\ge 0$. 
For any open compact subgroup $U\subseteq G(\Z_\ell)$ we set  
$\calA_{g,U}:=\wt\calA_{g,n}/U$. 
Then we set 
$U_{n\ell^m,u}:=U_{n\ell^m} \cap u (U_{n\ell^m}) u^{-1}$ and $\calH_{u}:=\calA_{g,U_{n\ell^m,u}}$.
Noting that $u^{-1} (U_{n\ell^m,u}) u=u^{-1} (U_{n\ell^m}) u \cap  U_{n\ell^m} = U_{n\ell^m,u^{-1}}$, the right translation 
\[ \rho_u: (X,\lambda,[\eta_\Q]_{U_{n\ell^m,u}})
\mapsto
(X,\lambda,[\eta_\Q u]_{u^{-1} (U_{n\ell^m,u})u})\]  
gives rise to an isomorphism
\[ \rho_u: \mathcal{H}_u = \calA_{g,U_{n \ell^m,u}}\simeq \calA_{g,U_{n \ell^m,u^{-1}}}=\calH_{u^{-1}}. \]
Let $\pr_1$ be the natural
projection $ \calH_{u}\to\calA_{g,n}$ and $\pr_2:=\pr \circ \rho_u: 
\calH_{u}\to\calA_{g,n}$ be the composition of the isomorphism 
$\rho_u$ with the
natural projection $\pr:\calH_{u^{-1}}\to \calA_{g,n}$. This defines
an $\ell$-adic Hecke correspondence $(\calH_u,\pr_1,\pr_2)$. 

For two
$\ell$-adic Hecke correspondences 
$\calH_{u_1}=(\calH_{u_1},p_{11},p_{12})$ and 
$\calH_{u_2}=(\calH_{u_2},p_{21},p_{22})$, one defines the composition
$\calH_{u_2}\circ \calH_{u_1}$ by
\[ (\calH_{u_2}\circ \calH_{u_1}, p_1, p_2),
  \]
where $\calH_{u_2}\circ
\calH_{u_1}:=\calH_{u_1}\times_{p_{12},\calA_{g,n}, p_{21}} \calH_{u_2}$, 
$p_1$ is the composition $\calH_{u_2}\circ
\calH_{u_1}\to \calH_{u_1}\stackrel{p_{11}} {\to} \calA_{g,n}$ and $p_2$
is the composition 
$\calH_{u_2}\circ \calH_{u_1}\to \calH_{u_2}\stackrel{p_{22}} {\to}
\calA_{g,n}$. A correspondence
$(\calH,\pr_1,\pr_2)$ generated by correspondences of the form
$\calH_u$ is also called an $\ell$-adic Hecke correspondence.  
 
For a scheme $X$ of finite type over a field $K$, we denote by $\Pi_0(X)$ the set of geometrically irreducible components of $X$.
A locally closed subset $Z$ of $\calA_{g,n}$ is
said to be {\it $\ell$-adic Hecke invariant} if ${\rm pr}_2
({\rm pr}_1^{-1}(Z))\subseteq Z$ for any $\ell$-adic Hecke correspondence
$(\calH,\rm{pr}_1,\rm{pr}_2)$. If $Z$ is an $\ell$-adic Hecke invariant,
locally closed subvariety of $\calA_{g,n}$, then the $\ell$-adic 
Hecke correspondences induce 
correspondences on the set $\Pi_0(Z)$ of geometrically irreducible
components. We say that $\Pi_0(Z)$ is {\it $\ell$-adic Hecke
  transitive} if 
the $\ell$-adic Hecke correspondences operate transitively on
$\Pi_0(Z)$; that is, for any two maximal (generic) points $\eta_1, \eta_2$ of
$Z$ there is an $\ell$-Hecke correspondence
$(\calH,\rm{pr}_1,\rm{pr}_2)$ so that $\eta_2\in \rm{pr}_2
(\rm{pr}_1^{-1}(\eta_1))$. 

For a geometric point $x\in \calA_{g,n}(k)$, denote by $\calH_\ell(x)$ the
$\ell$-adic Hecke orbit of $x$; this is the set of points generated by
$\ell$-adic correspondences starting from $x$. If $x=(X,\lambda,[\eta]_{U_n})$ and $x'=(X',\lambda',[\eta']_{U_n})$, then $x'\in \calH_\ell(x)$ if and only if there exists a polarised $\ell$-quasi-isogeny $\theta:(X',\lambda')\to (X,\lambda)$ (that is, $\ell^m \theta$ is an isogeny of $\ell$-power degree for some $m\ge 0$)
such that $\theta_*([\eta']_{U_n})=[\eta]_{U_n}$.

Let $\pi_n:\calA_{g,n}\to \calA_g$ be the natural map forgetting the level-$U_n$ structure. For any subscheme~$Z$ in $\calA_g$, let $Z_n:=Z\times_{\calA_g} \calA_{g,n}$. 
Thus, this defines subvarieties $\calS_{g,n}$ and $\calS^{\rm eo}_{g,n}$ of $\calA_{g,n}$.

\begin{proposition}\label{prop:hecke}
    The set $\Pi_0(\calS_{g,n})$ is $\ell$-adic Hecke transitive.
\end{proposition}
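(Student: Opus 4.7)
The plan is to use the $p$-adic uniformisation of the supersingular locus due to Rapoport--Zink--Spiess. Fix a principally polarised supersingular abelian variety $(X_0,\lambda_0)$ of dimension $g$ over $k$, and let $I/\Q$ denote the algebraic group whose $\Q$-points are the polarised self-quasi-isogenies of $(X_0,\lambda_0)$. Since $(X_0,\lambda_0)$ is supersingular and principally polarised, $I$ is an inner form of $\Sp_{2g}$ over $\Q$, hence simply connected and semisimple. The group $I$ is anisotropic at $\infty$; for each prime $\ell\neq p$, the local group $I(\Q_\ell)\simeq \Sp_{2g}(\Q_\ell)$ is non-compact. Let $\calM$ be the Rapoport--Zink space over $\Fpbar$ parametrising principally polarised supersingular $p$-divisible groups equipped with a polarised quasi-isogeny to $(X_0,\lambda_0)[p^\infty]$. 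The uniformisation theorem yields an isomorphism of $\Fpbar$-schemes
\[
\Theta\colon I(\Q)\backslash\bigl(\calM\times G(\A^{(p)}_f)/U_n\bigr)\isoto \calS_{g,n},
\]
under which the $\ell$-adic Hecke correspondence $\calH_u$ (for $u\in G(\Q_\ell)$) acts by right multiplication by $u$ on the factor $G(\A^{(p)}_f)/U_n$.

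Taking sets of geometrically irreducible components on both sides yields $\Pi_0(\calS_{g,n})\cong I(\Q)\backslash \Pi_0(\calM)\times G(\A^{(p)}_f)/U_n$. By the explicit description of the components of $\calM$ in terms of chains of Dieudonn\'e lattices (Li--Oort~\cite{lioort}, Harashita~\cite{harashita:SSEO}), the group $I(\Qp)$ acts transitively on $\Pi_0(\calM)$ with stabiliser an open subgroup $K_p\subseteq I(\Qp)$ containing the parahoric $\Aut_{\rm DM}(\wt M)$ of Lemma~\ref{lm:surj}. Substituting $\Pi_0(\calM)\cong I(\Qp)/K_p$ gives a finite double coset description
\[
\Pi_0(\calS_{g,n})\;\cong\;I(\Q)\backslash I(\A_f)/(K_p\cdot U_n),
\]
on which the $\ell$-adic Hecke action is right translation by $G(\Q_\ell)=I(\Q_\ell)\subseteq I(\A_f)$.

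Transitivity under $\ell$-adic Hecke correspondences on $\Pi_0(\calS_{g,n})$ is therefore equivalent to the equality
\[
I(\Q)\cdot I(\Q_\ell)\cdot K_p\cdot U_n\;=\;I(\A_f).
\]
This follows from strong approximation for $I$ with respect to $S=\{\infty,\ell\}$: since $I$ is simply connected semisimple and $I(\Q_\ell)$ is non-compact, $I(\Q)$ is dense in $I(\A^{S})=I(\Qp)\times I(\A^{(p,\ell)}_f)$. Given any $a\in I(\A_f)$, the conditions $\gamma_p a_p\in K_p$ and $\gamma^{(p,\ell)} a^{(p,\ell)}\in U_n^{(p,\ell)}$ (with the $\ell$-component free, absorbed into $I(\Q_\ell)$) cut out a non-empty open subset of $I(\A^S)$; density then produces $\gamma\in I(\Q)$ in this open set, yielding the claimed equality.

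The principal difficulty is establishing the Rapoport--Zink--Spiess uniformisation together with its Hecke-equivariance, and verifying that $I(\Qp)$ acts transitively on $\Pi_0(\calM)$ with an explicit open stabiliser. The first is classical (cf.~\cite{RZ} and its supersingular specialisation). The second reduces to a Dieudonn\'e-lattice calculation in which $I(\Qp)$, acting as polarised rational quasi-automorphisms, transitively permutes the lattice chains parametrising the components of $\calM$. Once these geometric inputs are in hand, strong approximation immediately yields the required transitivity on $\Pi_0(\calS_{g,n})$.
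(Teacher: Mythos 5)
Your proof is correct in its overall strategy, and it converges on the same endgame as the paper (a double coset description of $\Pi_0(\calS_{g,n})$ followed by strong approximation for the relevant inner form with respect to $\{\infty,\ell\}$), but the route to that double coset space is genuinely different. The paper avoids invoking the Rapoport--Zink uniformisation theorem altogether. Instead it first passes to the open dense locus $\calS_{g,n\ell^m}(a=1)$ of points with $a$-number $1$, and uses the contraction/minimal-isogeny map $(X,\lambda)\mapsto(\wt X,\wt\lambda)$ to superspecial abelian varieties (Li--Oort, Oda--Oort) to identify $\Pi_0(\calS_{g,n})$ with the finite set $\Lambda^*_{g,n}$ of level-$U_n$ polarised superspecial abelian varieties; this identification is $G(\Q_\ell)$-equivariant because minimal isogenies commute with prime-to-$p$ Hecke operators. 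Then the identification $\Lambda^*_{g,n}\isoto G_{x_0}(\Q)\backslash G_{x_0}(\A_f)/G_{x_0}(\Z_p)U_n$ is quoted from \cite{yu:smf}, which replaces your appeal to the $p$-adic uniformisation and the transitivity of $I(\Qp)$ on $\Pi_0(\calM)$. Your version is more uniform (it would generalise to other PEL types with little change) but requires two nontrivial geometric inputs that you acknowledge but do not supply: (i) the Rapoport--Zink uniformisation isomorphism at the level of $\Fpbar$-schemes, together with its prime-to-$p$ Hecke-equivariance, and (ii) the claim that $I(\Qp)$ acts transitively on $\Pi_0(\calM)$ with an \emph{open} stabiliser $K_p$. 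The second point is essentially a reformulation of the Li--Oort classification of the components of the supersingular locus, so in the end both proofs draw on the same underlying lattice-chain geometry; the paper's formulation just packages it without detouring through $\calM$. You should also verify that your $K_p$ coincides with the $G_{x_0}(\Z_p)$ used in the paper (both should be the stabiliser of the superspecial Dieudonn\'e lattice with its polarisation), so that the two double coset descriptions agree. The strong approximation step at the end is identical in both arguments.
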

\begin{proof}
    Let $\Lambda_{g,n\ell^m}^*$ be the set of isomorphism classes of $g$-dimensional polarised superspecial abelian varieties $(X,\lambda,[\eta]_{U_{n\ell^m}})$ over $\Fpbar$ with level-$U_{n\ell^m}$ structure, and $\wt \Lambda_{g,n}^*=(\Lambda_{g,n\ell^m}^*)_{m\ge 0}$ be the tower of the projective system. 
    Let $\calS_{g,n\ell^m}(a=1)$ denote the subset of $\calS_{g,n\ell^m}$ consisting of objects $(X,\lambda)$ with $a(X)=1$. 
    It is an open dense subset of $\calS_{g,n\ell^m}$ by \cite[Subsection 4.9]{lioort} and therefore we have $\Pi_0(\calS_{g,n\ell^m}(a=1))=\Pi_0(\calS_{g,n\ell^m})$.
    By~\cite{lioort} and \cite[Theorem 2.2]{oda-oort}, the contraction map 
    $\calS_{g,n\ell^m}(a=1)\to \Lambda^*_{g,n\ell^m}$ that sends each object $(X,\lambda, [\eta]_{U_{n\ell^m}})$ to $(\wt X,\wt \lambda, [\wt \eta]_{U_{n\ell^m}})$, where $\varphi:(\wt X,\wt \lambda, [\wt \eta]_{U_{n\ell^m}})\to (X,\lambda, [\eta]_{U_{n\ell^m}})$ is the minimal isogeny, induces an isomorphism $\alpha:\Pi_0(\calS_{g,n\ell^m})\isoto \Lambda^*_{g,n\ell^m}$. 
    Since the construction of minimal isogenies and the formalism of $\ell$-adic Hecke correspondences commute, the map $\alpha$ induces a $G(\Q_\ell)$-equivariant isomorphism $\alpha: \Pi_0(\wt \calS_{g,n})\isoto \wt \Lambda^*_{g,n}$, where $\wt \calS_{g,n} = (\calS_{g,n})_{m \geq 0}$ is the projective system and where $G$ is as in Equation~\eqref{eq:601}. In particular, the isomorphism $\alpha:\Pi_0(\calS_{g,n})\isoto \Lambda^*_{g,n}$ is $\ell$-adic Hecke equivariant. Thus, it suffices to show that the set $\Lambda^*_{g,n}$ is $\ell$-adic Hecke transitive.  

    Choose a base point $x_0=(X_0,\lambda_0,[\eta_0]_{U_n})$ of $\Lambda_{g,n}^*$. Let $G_{x_0}$ be the group scheme over $\Z$ which represents the functor 
    \[ R\mapsto G_{x_0}(R):=\{a \in (\End(X_0)\otimes R)^\times: a^t \lambda a =\lambda \}, \]
    for any commutative ring $R$. The generic fibre $G_{x_0}\otimes \Q$ is a compact inner form of the group~$G$ defined in Equation~\eqref{eq:601} and is simply connected. The choice of $\eta_0$ gives an identification $G_{x_0}(\A_f^{(p)})=G(\A_f^{(p)})$, so one can regard  $U_n$ as an open compact subgroup of $G_{x_0}(\A_f^{(p)})$. As shown in~\cite[Theorem 2.3]{yu:smf}, there is an isomorphism 
    \[ \gamma: \Lambda^*_{g,n} \isoto DS:=G_{x_0}(\Q)\backslash G_{x_0}(\A_f)/G_{x_0}(\Z_p)U_n \] which sends the base point $x_0$ to the identity class $[{\rm id}]$. 
    The construction in \emph{loc.~cit.}~also shows that $\gamma$ is prime-to-$p$ Hecke equivariant.
    Thus, we are reduced to showing that the double coset space $DS$ is $\ell$-adic Hecke transitive; or in other words, that the double coset space $DS$ coincides with the $\ell$-adic Hecke orbit $\calH_\ell(x_0)$ of $x_0$ under the isomorphism $\gamma$. By the construction of $\ell$-adic Hecke correspondences, this amounts to showing that every class $[\wh g]\in DS$ can be represented by some element  $g_\ell\in G_{x_0}(\Q_\ell)$. 

    We have \[ DS=G_{x_0}(\Q)\backslash G_{x_0}(\Q_\ell) \times G_{x_0} (\A_f^{(\ell)})/G_{x_0}(\Z_\ell) \times G_{x_0}(\Z_p)U_n^\ell,\] where $U_n^\ell\subseteq G_{x_0}(\A_f^{(p\ell)})$ is the prime-to-$\ell$ part of $U_n$. Since $G_{x_0}(\R\times \Q_\ell)$ is non-compact and $G_{x_0}$ is simply connected, strong approximation holds for $G_{x_0}$ with respect to the places $\{\infty, \ell\}$, cf.~\cite{kneser:sa}, and $G_{x_0}(\Q)$ is dense in $G_{x_0}(\A_f^{(\ell)})$ for the adelic topology. Thus, every class in $DS$ is represented by an element in $G_{x_0}(\Q_\ell)$ and the proposition is proved. 
\end{proof}

\begin{proof}[Proof of Theorem~\ref{thm:oc}]
Since $g$ is even and $p\ge 5$, by Theorem~\ref{thm:A1}, any point $(X,\lambda,[\eta]_{U_n})\in \calS_{g,r,[\F_{p^2}],n}^{\rm eo,0}(\Fpbar)$ has automorphism group $\Aut(X,\lambda)=\{\pm 1\}$. 
Thus, it suffices to show that for any irreducible component $Y$ of $\calS_{g,n}$, the intersection $Y\cap \calS_{g,r,[\F_{p^2}],n}^{\rm eo,0}$ is non-empty, because the automorphism group of a generalisation decreases.

Let $W'$ be an irreducible component of $\calS_{g,r,[\F_{p^2}],n}^{\rm eo,0}$ and let $Y'\subseteq \calS_{g,n}$ be an irreducible component containing $W'$. By Proposition~\ref{prop:hecke}, there exist an $\ell$-adic Hecke correspondence $(\calH_u,\pr_1, \pr_2)$ and an irreducible component $\wt Y\subseteq \calH_u$ such that $\pr_1(\wt Y)=Y'$ and $\pr_2(\wt Y)=Y$. Since $\calS_{g,r,[\F_{p^2}],n}^{\rm eo,0}$ is $\ell$-adic Hecke invariant, we have $\pr_2(\pr_1^{-1}(\calS_{g,r,[\F_{p^2}],n}^{\rm eo,0})\cap \wt Y)\subseteq \calS_{g,r,[\F_{p^2}],n}^{\rm eo,0}$, and hence that $Y\cap \calS_{g,r,[\F_{p^2}],n}^{\rm eo,0}$ is non-empty.
\end{proof}

\section{Oort's Conjecture In Dimension $\bm{g=4}$}\label{sec:g4}

The goal of this section to prove Oort's conjecture in dimension~$g=4$ and in any characteristic~$p>0$:

\begin{theorem}\label{thm:ocg=4}
Every generic $4$-dimensional principally polarised supersingular abelian variety $(X,\lambda)$ over an algebraically closed field $k$ of characteristic $p$ has automorphism group $\{\pm 1\}$.
\end{theorem}

A generic $4$-dimensional principally polarised supersingular abelian variety $(X,\lambda)$ over $k$ has $a$-number $a(X) = 1$. Let $E$ be a supersingular elliptic curve over $k$. By Li-Oort~\cite{lioort}, for each generic $(X,\lambda)$ as above, there exist a choice of polarisation $\eta$ on $E^4$ with $\ker(\eta) \simeq E^4[\mathsf{F}^3] \simeq E^4[\mathsf{V}^3]$ of order $p^{12}$ and an associated polarised flag type quotient (PFTQ)
\begin{equation}\label{eq:PFTQ}
(Y_3, \lambda_3) \simeq (E^4, \eta) \longrightarrow (Y_2,\lambda_2) \longrightarrow (Y_1,\lambda_1) \longrightarrow (Y_0, \lambda_0) = (X,\lambda).
\end{equation}
The composition of the maps in~\eqref{eq:PFTQ} yields an isogeny $\alpha: (E^4, \eta) \to (X,\lambda)$ whose kernel~$H$ of order $p^6$ is an isotropic subspace of $\ker(\lambda_3) = \ker(\mathsf{V}^3)$ with respect to the Weil pairing induced from $\lambda_3$.

For $0 \leq i \leq 3$, let $M_i$ denote the contravariant Dieudonn{\'e} module associated with~$Y_i$. 
We have 
\begin{equation}\label{eq:V3M3M0M3}
\mathsf{V}^3M_3 \subseteq_{6} M_0 \subseteq_{1} M_1 \subseteq_{2} M_2 \subseteq_{3} M_3,  
\end{equation}
where we write $M'\subseteq_{i} M''$ for $W$-modules $M'$ and $M''$ if ${\rm length}_W M''/M'=i$. Here $W:=W(k)$ denotes the ring of Witt vectors over $k$. Moreover, we have $(\sfF,\sfV)M_{i+1} \subseteq M_{i}\subseteq M_{i+1}$ and $\dim_k M_i/(\sfF,\sfV)M_{i+1}=1$. 

On each Dieudonn{\'e} module $M_i$ we have the polarisation $\langle\, , \rangle_i$ induced from the polarisation~$\lambda_i$ in~\eqref{eq:PFTQ}; equivalently, $\langle\, , \rangle_i$ is the restriction of $\langle\, , \rangle_3$ on $M_i$ for $0 \leq i \leq 3$. We shall write $\langle\, , \rangle$ for $\langle\, , \rangle_i$ for simplicity. 

For $0 \leq i \leq 3$, let $M_i^t$ be the dual lattice of $M_i$ with respect to $\langle\, , \rangle$, which is isomorphic to the \dieu module of the dual abelian variety $Y^{\vee}_i$ of $Y_i$. By $\ker \lambda_3=Y_3[\sfV^3]$, we have $\sfV^3 M_3=M_3^t$.
% , which implies that
% the pairing $\langle \, , \rangle$ on $M_3$ is of type $\left( \begin{smallmatrix}
%     0 & P \\ P & 0
% \end{smallmatrix} \right)$ 
% with $P = \mathrm{diag}(p^{-2},p^{-2},p^{-1}, p^{-1})$.
Below, we will also write $\mathsf{V}^3M_3$ as $p\mathsf{V}M_3$.

\subsection{Explicit description of Dieudonn{\'e} modules}\

Based on results of Harashita \cite{harassg4}, we give the following explicit description of the Dieudonn{\'e} modules $M_i$ for $0 \leq i \leq 3$. First we choose a standard $W$-basis
\begin{equation}\label{eq:M3}
M_3 = \langle x_1, \ldots, x_4, y_1, \ldots, y_4 \rangle_W,
\end{equation}
where $\mathsf{F}x_i = y_i = -\mathsf{V}x_i$ and $\mathsf{F}y_i = -px_i = -\mathsf{V}y_i$, and where the basis elements satisfy
\[
\langle x_1, x_2 \rangle = \frac{1}{p^2} = \langle x_3, x_4 \rangle, \qquad \langle y_1, y_2 \rangle = p \langle x_1, x_2 \rangle^{\sigma} = \frac{1}{p} = \langle y_3, y_4 \rangle,
\]
and the other pairings are zero. This follows from \cite[Lemma 6.1]{lioort} and the fact that $\sfV^3 M_3=M_3^t$.  Next, for $\mathsf{V}M_3 \subseteq M_2 \subseteq M_3$ we have
\begin{equation}\label{eq:M2}
M_2 = \langle z, \mathsf{F}x_i = y_i \text{ for $1 \leq i \leq 4$} \rangle_W + pM_3 = \langle z \rangle_W + \mathsf{V}M_3
\end{equation}
where $z := \tilde{t}_1 x_1 + \tilde{t}_2 x_2 + \tilde{t}_3 x_3 + \tilde{t}_4 x_4$ and the reductions $t_i = \tilde{t}_i \bmod p$ satisfy the equation
\begin{equation}
    \label{eq:f1}
    t_1 t_2^{p^2} - t_2 t_1^{p^2} + t_3 t_4^{p^2} - t_4 t_3^{p^2} = 0.
\end{equation}
Since $\dim_k M_2/\sfV M_3=1$, we have $\dim_k (\sfF,\sfV)M_2/\sfV^2 M_3 \in \{1,2\}$, and furthermore we see that $\dim_k (\sfF,\sfV)M_2/\sfV^2 M_3 = 1$ if and only if $(t_1:t_2:t_3:t_4)\in \bbP^3(\F_{p^2})$. This shows that $a(M_2)\in \{3,4 \}$, and that $a(M_2)=4$ if and only if $(t_1:t_2:t_3:t_4)\in \bbP^3(\F_{p^2})$. Thus, the generic case where $M_2$ is not superspecial happens when the one-dimensional  subspace $\<z \bmod p \>$ is not defined over $\mathbb{F}_{p^2}$.  Moreover, the condition $a(M_0)=1$ implies that
$M_3$ is the smallest superspecial \dieu module containing $M_0$~ \cite[Theorem~2.2]{oda-oort}.   
Thus, $M_3$ is the smallest superspecial \dieu module containing $M_2$ and $(t_1:t_2:t_3:t_4)$ is away from any $\F_{p^2}$-rational hyperplane of $\bbP^3$. \\
Without loss of generality we may set $\tilde{t}_1 = 1$. In the generic case where $(t_1:t_3) = (1:t_3)$ $\not\in \mathbb{P}^1(\mathbb{F}_{p^2})$, for $(\mathsf{F},\mathsf{V})M_2 \subseteq M_1 \subseteq M_2$ we further write
\begin{equation}\label{eq:M1}
M_1 = \langle u, (\mathsf{F},\mathsf{V})M_2 \rangle_W = \langle u, \mathsf{F}z, \mathsf{V}z \rangle_W + pM_3, 
\end{equation}
where $u = \tilde{u}_1 z + \tilde{u}_2 \mathsf{F}x_2 + \tilde{u}_3 \mathsf{F}x_4$ and the reductions $u_i = \tilde{u}_i \bmod p$ satisfy the equation\footnote{Note that there is small difference between our defining equation with that in~\cite[Section~4.3]{harassg4}, due to that we use the relation $\sfF^2 x_i=-px_i$ while Harashita uses the relation $\sfF^2 x_i=px_i$ for all $i$. Using our choice, the $\F_{p^2}$-structure for $M_3$ is compatible with the $\F_{p^2}$-structure for the moduli space of PFTQs. Notice that Equation~\eqref{eq:f2} is the defining equation for $M_1$ with a fixed $M_2$ whose homogeneous coordinates are $(t_1:t_2:t_3:t_4)$, and which depends only on $(t_1:t_2:t_3:t_4)$. In particular, its coefficients are homogeneous polynomials in $t_i$. To get the defining equation, we compute the defining equation with $t_1=1$ and homogenise the coefficients.}  

\begin{equation}
    \label{eq:f2}
    u_1(t_1^p u_2^p + t_1^{p-1} t_3 u_3^p + t_1^p u_2u_1^{p-1} +  t_3^p u_3 u_1^{p-1}) = 0.
\end{equation}

The condition $u_1 = 0$ is equivalent to the condition $M_1 \subseteq \mathsf{V}M_3$ which would yield $a(M_1) = 3$, while a rigid PFTQ will satisfy $a(M_1) = 2$. In other words, the ``non-garbage component'' $u_1 \neq~0$ contains the rigid PFTQs over $(X,\lambda)$; from now on, we will work only on this component, simplifying notation by setting $\tilde{u}_1 = 1$. 
Then in~\eqref{eq:M1} also note that 
\[
(\mathsf{F}, \mathsf{V})M_2 = \langle \mathsf{F}z, \mathsf{V}z\rangle_W + pM_3 = \langle \mathsf{F}u, \mathsf{V}u\rangle_W + pM_3.
\]
We further read off from~\eqref{eq:M2} and~\eqref{eq:M1} that 
\[
M_1 = \langle u \rangle_W + (\mathsf{F},\mathsf{V})M_2 + pM_3 = \langle u, \mathsf{F}z, \mathsf{V}z, px_1, px_2, px_3, px_4 \rangle_W + p\mathsf{V}M_3.
\]
Hence, 
\[
(\mathsf{F},\mathsf{V})M_1 = \langle \mathsf{F}u, \mathsf{V}u, \mathsf{F}^2z, pz, \mathsf{V}^2z \rangle_W + p\mathsf{V}M_3.
\]
Since $\langle px_1, px_2, px_3, px_4 \rangle \equiv \langle px_1, \mathsf{F}^2z, pz, \mathsf{V}^2z\rangle \bmod p\mathsf{V}M_3$, we may rewrite this as 
\[
(\mathsf{F},\mathsf{V})M_1 + \langle px_1 \rangle_W = \langle \mathsf{F}u, \mathsf{V}u\rangle_W + pM_3.
\]
This in turn implies that $M_1/(\mathsf{F},\mathsf{V})M_1$ is generated over $W$ by $u$ and $px_1$. Hence, we may choose the line 
\[
s := s_1 u + s_2 (px_1)
\]
to parametrise the one-dimensional subspace $M_0$, i.e.,
\begin{equation}\label{eq:M0}
M_0 = \langle s \rangle_W + (\mathsf{F},\mathsf{V})M_1 = \langle s, \mathsf{F}u, \mathsf{V}u, \mathsf{F}^2z, pz, \mathsf{V}^2z \rangle_W + p\mathsf{V}M_3.
\end{equation}
Since a generic $M_0$ does not contain $pM_3$, we need to choose $s_1 \neq 0$; we simplify the notation by setting $s_1 = 1$ and writing $s = u + s_2 (px_1)$.\\

The above descriptions yield the following diagram:
\begin{equation}\label{eq:diagram}
\begin{smallmatrix}
M_0 & \subseteq_1 & M_1 & \subseteq_2 & M_2 & \subseteq_3 & M_3 \\
\rotatebox{90}{$\subseteq$}_1 &  & \rotatebox{90}{$\subseteq$}_1 &  & \rotatebox{90}{$\subseteq$}_1 &  & \\
(\mathsf{F},\mathsf{V})M_1 & \subseteq_1 & (\mathsf{F},\mathsf{V})M_2 & \subseteq_2 & \mathsf{V} M_3 & & \\
\rotatebox{90}{$\subseteq$}_2 &  & \rotatebox{90}{$\subseteq$}_2 &  & & &\\
(\mathsf{F},\mathsf{V})^2 M_2 & \subseteq_1 & p M_3 & & & &\\
\rotatebox{90}{$\subseteq$}_3 & & & & & & \\
p \mathsf{V} M_3 & & & & & &\\
\end{smallmatrix}   
\end{equation}

We can now confirm the statements made in the previous subsection.
\begin{lemma}
We have $p\mathsf{V}M_3 \subseteq M_0$. Hence, choosing the $M_i$ as in \eqref{eq:M3}--\eqref{eq:M0}, the composition $\alpha: (E^4,\eta) \to (X,\lambda)$ of the maps in the PFTQ in \eqref{eq:PFTQ} has kernel $H := \ker(\alpha)$ satisfying $H \subseteq \ker(\mathsf{V}^3)$.
\end{lemma}

\begin{proof}
The first statement can be read off from~\eqref{eq:M0}. Indeed, by construction, we know that $\ker(\lambda_3) = \ker(\mathsf{F}^3) = \ker(\mathsf{V}^3)$ since $Y_3 \simeq E^4$ is superspecial. On the other hand, we know that $(M_0, \langle, \rangle_0)$ has length $p^3 \cdot p^2 \cdot p = p^6$ inside $(M_3, \langle, \rangle_3)$. 
Denoting the contravariant Dieudonn{\'e} functor by $\mathbb{D}$, furthermore $p\mathsf{V} M_3 \subseteq M_0$ implies that $\mathbb{D}(\ker(\mathsf{V}^3)) \simeq M_3/\mathsf{V}^3M_3 \twoheadrightarrow \mathbb{D}(H) \simeq M_3/M_0$, or equivalently, that $H \subseteq \ker(\mathsf{V}^3)$.
\end{proof}

\subsection{Endomorphisms and automorphisms of Dieudonn{\'e} modules}\

As in Lemma~\ref{lm:Vp}, let $D_p$ be the division quaternion algebra over $\mathbb{Q}_p$ and let $O_p$ denote its maximal order. We also write $D_p = \mathbb{Q}_{p^2}[\Pi]$ and $O_p = \mathbb{Z}_{p^2}[\Pi]$, where $\mathbb{Z}_{p^2} = W(\mathbb{F}_{p^2})$ and $\mathbb{Q}_{p^2} = \mathrm{Frac}(W(\mathbb{F}_{p^2}))$,
and where $\Pi^2 = -p$ and $\Pi a = a^{\sigma} \Pi$ for any $a \in \mathbb{Q}_{p^2}$. Here $a \mapsto a^\sigma$ denotes the non-trivial automorphism of $\mathbb{Q}_{p^2}/\mathbb{Q}_p$. If we let $*$ denote the canonical involution of~$D_p$, then $a^* = {a}^\sigma$ for
any $a \in \mathbb{Q}_{p^2}$, and $\Pi^* = - \Pi$.

Let $M_3^{\diamond} = \{ m \in M_i : (\mathsf{F}+\mathsf{V})m = 0\}$ denote the skeleton of $M_3$. It is a \dieu module over $\F_{p^2}$ and we have $\End_{\rm DM}(M_3)=\End_{\rm DM}(M_3^\diamond)$.

We first consider the endomorphisms and automorphisms of $M_3$. 
Namely, it follows from Equation~\eqref{eq:M3} that $\mathrm{End}_{\rm DM}(M_3) \simeq \mathrm{Mat}_4(O_p)$. Hence $\mathrm{Aut}(M_3) \simeq \mathrm{GL}_4(O_p)$, and $\mathrm{Aut}(M_3, \langle, \rangle_3)$ consists of those automorphisms preserving $\langle, \rangle_3$. 

Since $O_p = \mathbb{Z}_{p^2}[\Pi]$, we may write any element $g \in \mathrm{End}_{\rm DM}(M_3)$ as $A_0 + B_0\Pi$. Indeed, $\mathrm{Mat}_4(\mathbb{Z}_{p^2}[\Pi]) = \mathrm{Mat}_4(\mathbb{Z}_{p^2}) + \mathrm{Mat}_4(\mathbb{Z}_{p^2}) \Pi$ when we identify $\Pi \mapsto \left( \begin{smallmatrix}
    0_4 & -p \mathbb{I}_4 \\ \mathbb{I}_4 & 0_4
\end{smallmatrix} \right)$, so 
\[
g = A_0 + B_0\Pi \mapsto \left( \begin{smallmatrix}
    A_0 & -pB_0^{\sigma} \\ B_0 & A_0^{\sigma}
\end{smallmatrix} \right)
\]
with identifications $A_0, B_0 \in \mathrm{Mat}_4(\mathbb{Z}_{p^2})$. 
We then have that $\mathsf{F}$ acts as $\Pi$ and $\mathsf{V}$ acts as $\Pi^*$.

We denote by $m_p$ the reduction-modulo-$pM_3$ map, and for any $n \geq 1$ by $m_{\mathsf{V}^n}$ the reduction-modulo-$\mathsf{V}^nM_3$ map. In particular, for any $M$ such that $\mathsf{V}^nM_3 \subseteq M \subseteq M_3$ we get $m_{\mathsf{V}^n}(M) = M/\mathsf{V}^nM_3$. Note that any $g \in \mathrm{End}_{\rm DM}(M_3)$ also satisfies $g(\mathsf{V}^nM_3) \subseteq \mathsf{V}^nM_3$ for any $n \geq 1$, so $g \in \mathrm{End}_{\rm DM}(\mathsf{V}^n M_3)$ as well. Finally, recall that both $\mathsf{F}$ and $\mathsf{V}$ act as $\Pi$ on~$M_3^{\diamond}$ and note that $\mathrm{End}_{\rm DM}(M_3)/\mathrm{End}_{\rm DM}(M_3)\Pi^n \hookrightarrow \mathrm{End}_{\rm DM}(m_{\mathsf{V}^n}(M_3^\diamond))$ for all $n \geq 1$. In particular, it follows that 
\begin{align}\label{eq:mV3M3}
& \mathrm{End}_{\rm DM}(M_3)/\mathrm{End}_{\rm DM}(M_3)\Pi^3 \simeq \mathrm{Mat}_4(O_p/\Pi^3) \\ \nonumber
& \simeq \left\{ \left( \begin{smallmatrix}
    A & -pB^{\sigma} \\ B & A^{\sigma}
\end{smallmatrix} \right) : A \in \mathrm{Mat}_4(\mathbb{Z}_{p^2}/p^2\mathbb{Z}_{p^2}), B \in \mathrm{Mat}_4(\mathbb{Z}_{p^2}/p\mathbb{Z}_{p^2}) \right\}.
\end{align}
Note that the multiplicative structure is given by
\[
\left( \begin{smallmatrix}
    A & -pB^{\sigma} \\ B & A^{\sigma}
\end{smallmatrix} \right) \cdot \left( \begin{smallmatrix}
    C & -pD^{\sigma} \\ D & C^{\sigma} \end{smallmatrix} \right) = \left( \begin{smallmatrix}
    AC - p\tilde{B}^{\sigma}\tilde{D} & -p\left(B\overline{C} + \overline{A}^{\sigma}D \right)^\sigma \\ B\overline{C} + \overline{A}^{\sigma}D &   (AC-p\tilde{B}^{\sigma}\tilde{D})^{\sigma}
\end{smallmatrix} \right),
\]
where $\tilde{B}$ denotes a lift of $B$ from $\mathrm{Mat}_4(\mathbb{Z}_{p^2}/p\mathbb{Z}_{p^2})$ to $\mathrm{Mat}_4(\mathbb{Z}_{p^2}/p^2\mathbb{Z}_{p^2})$ and $\overline{A}$ denotes the reduction of $A$ from $\mathrm{Mat}_4(\mathbb{Z}_{p^2}/p^2\mathbb{Z}_{p^2})$ to $\mathrm{Mat}_4(\mathbb{Z}_{p^2}/p\mathbb{Z}_{p^2})$.
We have the exact sequence
\begin{equation}\label{eq:sesM3}
0 \longrightarrow \mathrm{End}_{\rm DM}(M_3)\Pi^2/\mathrm{End}_{\rm DM}(M_3)\Pi^3 \longrightarrow \mathrm{End}_{\rm DM}(M_3)/\mathrm{End}_{\rm DM}(M_3)\Pi^3 \xrightarrow{m_p} \mathrm{End}_{\rm DM}(m_p(M_3^\diamond)) 
\end{equation}
and we read off from~\eqref{eq:mV3M3} that for an element $\tilde{g} \in \mathrm{End}_{\rm DM}(M_3)/\mathrm{End}_{\rm DM}(M_3)\Pi^3$ we get
\begin{equation}\label{eq:mpg}
m_p(\tilde{g}) = \left( \begin{smallmatrix}
    \overline{A} & 0 \\ B & \overline{A}^{\sigma}
\end{smallmatrix} \right) \in \mathrm{Mat}_8(\mathbb{F}_{p^2}).
\end{equation}

Next, we consider the endomorphism ring of $M_1$. 
It follows from the diagram in~\eqref{eq:diagram} that $p\mathsf{V}M_3 = \mathsf{V}^3M_3 \subseteq M_1$, so we may consider $m_{\mathsf{V}^3}(M_1)$, as well as $m_p(M_1)$ (cf.~\eqref{eq:M1}). 

By~\eqref{eq:M1} we have $M_1 = \langle u, \mathsf{F}z, \mathsf{V}z\rangle_W + pM_3$, where $z = x_1 + \tilde{t}_2 x_2 + \tilde{t}_3x_3 + \tilde{t}_4 x_4$ and $u = z + \tilde{u}_2y_2 + \tilde{u}_3y_4$.
Hence we can choose a basis for $m_p(M_1)$ consisting of 
\begin{align*}
    u  &=  x_1 + t_2 x_2 + t_3x_3 + t_4 x_4 + u_2y_2 + u_3y_4, \\
    v_1 &=  y_1 + t''_2 y_2 + t''_4 y_4, \\
    v_2 &=  t'_2 y_2 + y_3 + t'_4 y_4,
\end{align*}
with coordinates $(t_2,t_3,t_4,u_2,u_3)$ subject to the relations~\eqref{eq:f1} and~\eqref{eq:f2} for $t_1=1$ and $u_1=1$ where 
\begin{equation}\label{eq:ti'}
    t'_2 = \frac{(t^p_2 - t^{1/p}_2)}{(t^{p}_3 - t^{1/p}_3)}, \quad t'_4 = \frac{(t^{p}_4 - t^{1/p}_4)}{(t^{p}_3 - t^{1/p}_3)}, \quad  t''_2 = t_2^{p} - t_3^{p} t'_2, \quad t''_4 = t_4^{p}- t_3^{p} t'_4,
\end{equation}
which can be viewed as well-defined algebraic functions in $t_2,t_3,t_4$ with $t_3\not\in \F_{p^2}$. 
One computes
\[ t_2^{\prime p}=\frac{t_2^{p^2}-t_2}{t_3^{p^2}-t_3}=\frac{t_4 t_3^{p^2}-t_3t_4^{p^2}}{t_3^{p^2}-t_3},\quad t_4^{\prime \prime p}=\frac{t_4^{p^2}(t_3^{p^2}-t_3)-t_3^{p^2}(t_4^{p^2}-t_4)}{t_3^{p^2}-t_3}=\frac{t_4 t_3^{p^2}-t_3t_4^{p^2}}{t_3^{p^2}-t_3}\]
and hence $t_2'=t_4''$.

Consider the affine variety $Y$ over $\F_{p^2}$ whose coordinate ring is defined by 
\[ \F_{p^2}[Y]=\mathbb{F}_{p^2}[t_2,t_3,t_4,u_2,u_3]/(t_2^{p^2}-t_2+t_3 t_4^{p^2}-t_4 t_3^{p^2}, u_2^p+t_3  u_3^p+u_2+u_3  t_3^p), \]
and the affine variety $Z$ over $\F_{p^2}$ defined by
\[ \F_{p^2}[Z]=\mathbb{F}_{p^2}[t_2,t_3,t_4]/(t_2^{p^2}-t_2+t_3 t_4^{p^2}-t_4 t_3^{p^2}).\]
Then both $Y$  and $Z$ are geometrically irreducible, and we have 
\[ \F_{p^2}[Y]= \F_{p^2}[Z][u_2,u_3]/(u_2^p+t_3  u_3^p+u_2+u_3  t_3^p)=\bigoplus_{i=0}^{p-1}\F_{p^2}[Z][u_3] u_2^i.\]

%Set 
%\[ (\alpha_0,\alpha_1,\dots, \alpha_8):=(1,t_2,t_3,t_4,t_2',t_4',t_2'',u_3,u_2). \]
% as a tuple of algebraic functions in $t_2,t_3,t_4,u_2,u_3$.

\begin{lemma}\label{lm:lin_indp} 
Set 
\[ (\alpha_0,\alpha_1,\dots, \alpha_8):=(1,t_2,t_3,t_4,t_2',t_4',t_2'',u_3,u_2), \]
where $t_2',t_4',t_2''$ are defined in Equation~\eqref{eq:ti'}.
  Then there exists an element $x=(t_2,t_3,t_4,u_2,u_3)\in Y(k)$ such that the specialisations $\alpha_i(x)\alpha_j(x)$ of $\alpha_i \alpha_j$ at $x$, for $0\le i,j\le 8$ in $k$, are $\F_{p^2}$-linearly independent.   
\end{lemma}

\begin{proof}
    We regard the $\alpha_i$ as elements in
    \[ \F_{p^2}[Y][t_2^{\frac{1}{p}}, t_3^{\frac{1}{p}}, t_4^{\frac{1}{p}}][1/(t_3^p-t_3^{1/p})]=\bigoplus_{i=0}^{p-1} \left(\F_{p^2}[Z][t_2^{\frac{1}{p}}, t_3^{\frac{1}{p}}, t_4^{\frac{1}{p}}][1/(t_3^p-t_3^{1/p})]\right )[u_3] u_2^i. \] Then it is equivalent to show that the elements $\alpha_i \alpha_j$ for $0\le i,j\le 8$ are linearly $\F_{p^2}$-linearly independent in 
    the above $\F_{p^2}$-algebra. Indeed, if the elements $\alpha_i \alpha_j$ are $\F_{p^2}$-linearly independent, then for some point $x\in Y(k)\subseteq k^5$, their specialisations $\alpha_i(x) \alpha_j(x)$ are $\F_{p^2}$-linearly independent as $k$ is algebraically closed. Conversely, if there is an $\F_{p^2}$-linear relation among the elements $\alpha_i \alpha_j$, then no such specialisation exists.
    
    Suppose that we have an $\F_{p^2}$-linear relation \[ \sum_{0\le i\le j\le 8} a_{ij} \alpha_i \alpha_j=0, \quad  a_{ij}\in \F_{p^2}. \]
    Let us first assume that $\alpha_i \alpha_j$ for $0\le i,j\le 6$ are linearly independent. The terms involving $u_2$ are $u_2^2$ and $\gamma u_2$, where $\gamma\in \< \alpha_j\>_{\F_{p^2}, 0\le j \le 7}$. 
    If $a_{i8}\neq 0$ for some $i$, that is, if there is a linear relation involving $u_2^2$ and $\gamma u_2$, then 
    \[ u_2^2-\gamma u_2 \in \<\alpha_i \alpha_j\>_{\F_{p^2}, 0\le i,j\le 7}. \]
    This is impossible if $p>2$. If $p=2$, then we have
    \[ u_2^2=t_3u_3^2+u_2+t_3^2 u_3, \quad \gamma=1, \quad\text{and}\quad u_2^2-u_2=t_3u_3^2+t_3^2 u_3\in \<\alpha_i \alpha_j\>_{\F_{p^2}, 0\le i,j\le 7}. \]

This is also impossible, since the only term of $u_3$-degree $2$ in $\{\alpha_i \alpha_j\}_{0\le i,j\le 7}$ is $u_3^2$, which has total degree $2$ while $t_3 u_3^2$ has total degree $3$. 

We also have $a_{i7}=0$ for all $0\le i\le 7=0$ since $u_3$ is algebraically independent from $t_2,t_3,t_4$. 
    Under our assumption for $\alpha_i\alpha_j$ with $0\le i, j\le 6$, we get $a_{ij}=0$ for all $i,j$.
    
 We now show that the elements $\alpha_i \alpha_j$ for $0\le i,j\le 6$ are $\F_{p^2}$-linearly independent. Notice that this is equivalent to showing that the elements $\beta_i \beta_j$ for $0\le i,j\le 6$ are $\F_{p^2}$-linearly independent, where 
    \[ \beta_i:=(t_3^{p^2}-t_3)\alpha_i^p\in \F_{p^2}[Z]=\bigoplus_{j=0}^{p^2-1}\F_{p^2}[t_3,t_4]t_2^j. \]
    Suppose that we have an $\F_{p^2}$-linear relation $\sum_{0\le i\le j\le 6} b_{ij} \beta_i \beta_j=0$ with $b_{ij}\in \F_{p^2}$. We compute
    \[ \beta_0=t_3^{p^2}-t_3,   \quad   \beta_1= (t_3^{p^2}-t_3)t_2^p,  \quad \beta_2=(t_3^{p^2}-t_3)t_3^p,   \quad \beta_3=(t_3^{p^2}-t_3)t_4^p,  \]
    \[ \beta_4=t_4 t_3^{p^2}-t_3 t_4^{p^2},   \quad  \beta_5=t_4^{p^2}-t_4,   \quad\beta_6=(t_3^{p^2}-t_3)t_2 - t_3 (t_4 t_3^{p^2}-t_3 t_4^{p^2}). \]
    One has $\deg_{t_2} \beta_{1}=p$ and $\deg_{t_2} \beta_6=1$. The terms in $\{\beta_i \beta_j\}$ of positive $t_2$-degree are 
    $\beta_1^2$, $\beta_1\beta_6$, $\beta_6^2$, $\gamma_1 \beta_1$, $\gamma_2 \beta_6$, of respective $t_2$-degrees $2p$, $p+1$, $2$, $p$ and $1$, where $\gamma_1,\gamma_2\in \<\beta_j\>_{\F_{p^2}, j=0,2,3,4,5}$. If $p\ge 3$,  
    then the above degrees are all distinct and there is no linear relation among them. Therefore, $b_{ij}=0$ if some of $i,j$ is equal to $1$ or $6$. If $p=2$, then we have, in the reduced form,
    \[ \beta_1^2=(t_3^{4}-t_3)^2 t_2^4=(t_3^{4}-t_3)^2 (t_2+t_4 t_3^{4}-t_3 t_4^{4}),\quad \beta_6^2=(t_3^{4}-t_3)^2 t_2^2+t_3^2(t_4 t_3^{4}-t_3 t_4^{4})^2.\]
    If $b_{ij}\neq 0$ for some of $i, j$ equal to $1$ or $6$, then we have that either $\beta_1^2-\gamma_2 \beta_6$ (linear relation at $t_2$-degree 1) or $\beta_6^2-\gamma_1 \beta_1$ (linear relation at $t_2$-degree 2) lies in $\<\beta_i \beta_j\>_{\F_{p^2}, i,j=0,2,3,4,5}$. This could happen only when either $\gamma_1=(t_3^4-t_3)$ or $\gamma_2=(t_3^4-t_3)$ and we compute
    \[ \beta_1^2-(t_3^4-t_3) \beta_6=t_3^4 (t_3^4-t_3)(t_4 t_3^{4}-t_3 t_4^{4}),\quad
    \beta_6^2-(t_3^4-t_3) \beta_1=t_3^2(t_4 t_3^{4}-t_3 t_4^{4})^2.\]
    From the computation we see that neither $\beta_1^2-(t_3^4-t_3) \beta_6$ nor $\beta_6^2-(t_3^4-t_3) \beta_1$ lies in $\<\beta_i \beta_j\>_{\F_{p^2}, i,j=0,2,3,4,5}$ and hence $b_{ij}=0$ if some of $i,j$ is equal to $1$ or $6$. 

    It remains to show that there is no linear relation among $\beta_i \beta_j$ for $i,j\in \{0,2,3,4,5\}$; the details for this step are omitted. 
\end{proof}

\begin{proposition}\label{prop:EndmpM1}
We have 
\begin{align*}
m_p(\mathrm{End}_{\rm DM}(M_1)) & = \{ \overline{g} \in m_p(\mathrm{End}_{\rm DM}(M_3)) : \overline{g}(m_p(M_1)) \subseteq m_p(M_1) \} \\
& = \{ a \cdot \mathbb{I}_8 : a \in \mathbb{F}_{p} \}.    
\end{align*}
\end{proposition}

\begin{proof}
By Lemma~\ref{lm:lin_indp}, one can choose $(t_2,t_3,t_4,u_2,u_3)\in Y(k)$ such that the products of two of $\{1,t_2, t_3, t_4,  t'_2, t'_4, t''_2, u_2, u_3\}$ are $\F_{p^2}$-linearly independent, that is, we have
\begin{equation}\label{eq:lin_indep}
    \dim_{\F_{p^2}} \F_{p^2}[t_2, t_3, t_4,  t'_2, t'_4, t''_2, u_2, u_3]_{\le 2}=\dim_{\F_{p^2}} \F_{p^2}[z_1,\dots, z_8]_{\le 2},
\end{equation}  
where the left hand side is an $\F_{p^2}$-subpace in $k$ and the right hand side is a polynomial ring.
Let 
\[P = \left( \begin{matrix}
    t_2 & 0 & 0 \\
    t_3 & 0 & 0 \\
    t_4 & 0 & 0 \\
    u_2 & t''_2 & t'_2 \\
    u_3 & t'_2 & t'_4
\end{matrix}\right)
\] 
so that $\left( \begin{smallmatrix}
   \mathbb{I}_3 \\ P 
\end{smallmatrix} \right)$
denotes $u, v_1, v_2$ as column vectors in the basis $\{x_1, y_1, y_3, x_2, x_3, x_4, y_2, y_4\}$. Then to determine the endomorphisms of $m_p(M_1)$, we need to determine the matrices 
$\left( \begin{smallmatrix}
   A & B \\ C & D 
\end{smallmatrix} \right) \in \mathrm{Mat}_8(\mathbb{F}_{p^2})$ where $A, B, C, D$ are respectively $3 \times 3$, $3 \times 5$, $5 \times 3$, and $5 \times 5$ blocks,
satisfying 
\[
\left( \begin{smallmatrix}
   A & B \\ C & D 
\end{smallmatrix} \right) 
\left( \begin{smallmatrix}
   \mathbb{I}_3 \\ P 
\end{smallmatrix} \right)
= 
\left( \begin{smallmatrix}
   A + BP \\ C + DP
\end{smallmatrix} \right)
= 
\left( \begin{smallmatrix}
   \mathbb{I}_3 \\ P 
\end{smallmatrix} \right) Q  \text{\ \ for some $Q\in \Mat_3(k)$.}
\]
Then $Q=(A+BP)$ and we get a matrix equation 
%That is, we need to solve
\[
C + D P = P(A+BP).
\]
Using Equation~\eqref{eq:lin_indep}, we get $C=0$, $DP=PA$ and $PBP=0$. To simplify the notation, we rewrite
\[ P=\left( \begin{matrix}
    t_{11} & 0 & 0 \\
    t_{21} & 0 & 0 \\
    t_{31} & 0 & 0 \\
    t_{41} & t_{42} & t_{52} \\
    t_{51} & t_{52} & t_{53}
\end{matrix}\right)=\sum_{(i,j)\in I} t_{ij} E_{ij} +t_{52} E_{52}',
\] 
where $I=\{(i,1), 1\le i\le 5, (4,2), (5,3)\}$  and $E_{52}'=E_{52}+E_{43}$. We compute
\[ PBP=\sum_{(i_1,j_1), (i_2,j_2)\in I} t_{i_1j_1} t_{i_2j_2} E_{i_1j_1}B E_{i_2 j_2} + \sum_{(i_1,j_1)\in I} t_{i_1 j_1} t_{52} (E_{i_1j_1}B E_{52}'+ E_{52}'B E_{i_1j_1}) +t_{52}^2 E_{52}'B E_{52}'. \]
Write $B=\sum_{1\le i\le 3,1\le j\le 5} b_{ij} E_{ij}$ with $b_{ij}\in \F_{p^2}$. It follows from~Equation~\eqref{eq:lin_indep} that
\[ E_{i_1j_1}B E_{i_2 j_2}+E_{i_2 j_2}B E_{i_1j_1}=b_{j_1i_2} E_{i_1j_2}+ b_{j_2 i_1} E_{i_2 j_1}  =0, \quad \forall\,  (i_1,j_1)\neq (i_2,j_2) \in I. \]
% \[ E_{i_1j_1} B E_{i_1 j_1}=b_{j_1 i_1} E_{i_1 j_1} =0, \forall\, (i_1,j_1)\in I \]
Taking 
\begin{itemize}
    \item $(i_1,j_1)=(i',1)$ and $(i_2,j_2)=(i,1)$ for $1\le i\le 5$, where $i'$ is any integer between $1$ and $5$ with $i'\neq i$;
    \item $(i_1,j_1)=(4,2)$ and $(i_2,j_2)=(i,1)$ for $1\le i\le 5$; and 
    \item $(i_1,j_1)=(5,3)$ and $(i_2,j_2)=(i,1)$ for $1\le i\le 5$, respectively,
    %\item \textcolor{blue}{I think with the third bullet you meant: $(i_1,j_1)=(5,3)$ and $(i_2,j_2)=(i,1)$ for $1\le i\le 5$, respectively,}
\end{itemize} 
we get 
\[ b_{1i} E_{i_1,1} +b_{1,i_1} E_{i,1}=0, \quad b_{2i} E_{41}+b_{14} E_{i 2}=0, \quad b_{3i} E_{51}+b_{15} E_{i,3}=0.\]
It follows that $b_{ij}=0$ for all $1\le i\le 3, 1\le j\le 5$, and hence $B=0$.

Write $D =\sum_{1 \le k, l \le 5} d_{k l} E_{k l}$ and $A =\sum_{1 \le r, s \le 3} a_{rs} E_{rs}$ with coefficients in $\F_{p^2}$. We have
\[ DP=\sum_{(i,j)\in I} t_{ij} D E_{ij}+t_{52} D E'_{52}=\sum_{(i,j)\in I} t_{ij}E_{ij} A+t_{52}  E'_{52}A=PA.  \]
It follows from Equation~\eqref{eq:lin_indep}  that 
\[ D E_{ij} = E_{ij} A, \quad \forall\,(i,j) \in I \quad \text{and}\quad  DE'_{52}=E'_{52}A. \]
Thus, we have  
\[ \sum_{1 \leq k \leq 5} d_{k i} E_{k j}=\sum_{1\le s \le 3} a_{j s} E_{i s}, \quad \forall\,(i, j) \in I.\]
If $(k, j) \neq(i, s)$, that is, if $k \neq i$ or $s \neq j$, then we have $d_{k i}=0$ and $a_{j s}=0$.
Since we can take $i$ to be $1,\dots,5$ and $j$ to be $1,2,3$, it follows that $A$ and $D$ are diagonal. This implies 
\[
d_{i i} \cdot E_{i j}=a_{j j} E_{i j}, \quad \forall\, (i, j) \in I.
\]
From the table 
%$$
%\begin{aligned}
%& \resizebox{\textwidth}{!}{
\[ 
\begin{tabular}{|c|c|c|c|c|c|c|c|}
\hline
$(i, j)$ & (1,1) & (2,1) & (3,1) & (4,1) & (5,1) & (4,2)  & (5,3) \\
\hline
\text{cond} & $d_{11}=a_{11}$ & $d_{22}=a_{11}$ & $d_{33}=a_{11}$ & $d_{44}=a_{11}$ & $d_{55}=a_{11}$ & $d_{44}=a_{22}$  & $d_{55}=a_{33}$ \\
\hline
\end{tabular} \]
%& \Rightarrow\left(\begin{array}{ll}
%A & B \\
%C & D
%\end{array}\right)=a \cdot \mathbb{I}_8, \quad a \in \mathbb{F}_{p^2} .
%\end{aligned}
%$$
we obtain that $\left( \begin{smallmatrix}
   A & B \\ C & D 
\end{smallmatrix} \right) = a \cdot \mathbb{I}_8$ for some $a \in \mathbb{F}_{p^2}$.

Finally, we note that $m_p(\mathrm{End}_{\rm DM}(M_1)) \subseteq m_p(\mathrm{End}_{\rm DM}(M_3))$ and compare with~\eqref{eq:mpg} to conclude that $a = a^p$, which implies that $a \in \mathbb{F}_p$.
\end{proof}

\begin{corollary}\label{cor:EndmV3M1}
   We have 
   \[
   m_{\mathsf{V}^3}(\mathrm{End}_{\rm DM}(M_1)) = \left\{ \left( \begin{smallmatrix}
       A & 0 \\ 0 & A^{\sigma}
   \end{smallmatrix} \right) \text{ for } A \in \mathrm{Mat}_4(\mathbb{Z}_{p^2}/p^2\mathbb{Z}_{p^2}) \text{ s.t. } A \equiv a\cdot \mathbb{I}_4 \bmod p, a 
   \in \mathbb{F}_p \right\}.
   \]
\end{corollary}

\begin{proof}
This follows from Proposition~\ref{prop:EndmpM1} and from Equation~\eqref{eq:mpg}. 
\end{proof}

Finally, we determine the endomorphisms and automorphisms of $M_0$ modulo $\mathsf{V}^3$; that is, we determine $m_{\mathsf{V}^3}(\mathrm{End}_{\mathrm{DM}}(M_0))$ as a subset of the set $m_{\mathsf{V}^3}(\mathrm{End}_{\mathrm{DM}}(M_1))$ determined in Corollary~\ref{cor:EndmV3M1}.  Recall that~$M_0\supseteq (\mathsf{F},\mathsf{V})M_1$ is the lift of a one-dimensional submodule of $M_1/(\mathsf{F},\mathsf{V})M_1 = \langle u, px_1 \rangle_W$; in fact, we have from Equation~\eqref{eq:M0} that
\begin{equation}\label{eq:mV3M0}
m_{\mathsf{V}^3}(M_0) = \langle s, \mathsf{F}u, \mathsf{V}u, \mathsf{F}^2z, pz, \mathsf{V}^2z \rangle = \langle s \rangle + m_{\mathsf{V^3}}((\mathsf{F},\mathsf{V})M_1),
\end{equation}
where $s = u + s_2 px_1$ as before.

\begin{proposition}\label{prop:EndmV3M0}
We have $m_{\mathsf{V}^3}(\mathrm{End}_{\mathrm{DM}}(M_0)) = \{ a \cdot \mathbb{I}_8 : a \in \mathbb{Z}_{p}/p^2\mathbb{Z}_{p} \}$. 
\end{proposition}

\begin{proof}
 Since elements of $m_{\mathsf{V}^3}(\mathrm{End}_{
 \mathrm{DM}}(M_1))$ automatically preserve $m_{\mathsf{V^3}}((\mathsf{F},\mathsf{V})M_1)$, by~\eqref{eq:mV3M0} it suffices to check when $s = u+ s_2px_1$ is also preserved. 

 Using the basis $\{x_1, x_2, x_3, x_4, y_1,  y_2, y_3, y_4 \}$ 
 this comes down to showing when an element $ \left( \begin{smallmatrix}
       A & 0 \\ 0 & A^{\sigma}
   \end{smallmatrix} \right) \in m_{\mathsf{V}^3}(\mathrm{End}_{\mathrm{DM}}(M_1))$ satisfies
 \begin{equation}\label{eq:mV3EndM0.1}
    \left( \begin{smallmatrix}
       A & 0 \\ 0 & A^{\sigma}
   \end{smallmatrix} \right) (1+s_2p, t_2, t_3, t_4,0, u_2, 0,u_3)^T \in \langle s, \mathsf{F}u, \mathsf{V}u, \mathsf{F}^2z, pz, \mathsf{V}^2z \rangle_{W/p^2W} \!\!\! \! \mod \sfV^3 M_3. 
 \end{equation}
 Replacing $A$ by $A-a \bbI_4$ for a $\sigma$-invariant element $a\in \Z_{p^2}/p^2\Z_{p^2}$, we may assume by Corollary~\ref{cor:EndmV3M1} that $A=(pa_{ij})$ for some $a_{ij}\in \Z_{p^2}/p^2\Z_{p^2}$.
 %We can modify the generators $\mathsf{F}u, \mathsf{V}u, \mathsf{F}^2z, pz, \mathsf{V}^2z$ so that their $x_1$-coefficients are zero. 
 From Equation~\eqref{eq:mV3EndM0.1}, we deduce that 
 \begin{equation}\label{eq:mV3EndM0.2} 
    \left( \begin{smallmatrix}
       A & 0 \\ 0 & A^{\sigma}
   \end{smallmatrix} \right) (1+s_2p, t_2, t_3, t_4,0, u_2, 0,u_3)^T = p(a_{11}+a_{12} t_2+a_{13} t_3 +a_{14}t_4) (1+s_2p, t_2, t_3, t_4,0, u_2, 0,u_3)^T.
 \end{equation}
Expanding Condition \eqref{eq:mV3EndM0.2}, we get 
\[ 
\left(\begin{array}{c}
a_{11}+a_{12} t_2+a_{13} t_3+ a_{14} t_4 \\
a_{21}+a_{22} t_2+a_{23} t_3+a_{24} t_4 \\
a_{31}+a_{32} t_2+a_{33} t_3+a_{34} t_4 \\
a_{41}+a_{42} t_2+a_{43} t_3+a_{44} t_4 \\
a_{12}^{p} u_2+a_{14}^{p} u_3 \\
a_{22}^{p} u_2+a_{24}^{p} u_3 \\
a_{32}^{p} u_2+a_{34}^{p} u_3 \\
a_{42}^{p} u_2+a_{44}^{p} u_3
\end{array}\right)
\equiv(a_{11}+a_{12} t_2+a_{13} t_3+ a_{14} t_4) 
\left(\begin{array}{c}
1 \\
t_2 \\
t_3 \\
t_4 \\
0 \\
u_2 \\
0 \\
u_3
\end{array}\right) \pmod p .
\] 
 By $\F_{p^2}$-linear independence for products of two of $\{1,t_2,t_3,t_4, u_2,u_3\}$, we obtain that $a_{ij}=0$ for $i\neq j$, and that the $a_{ii}$ are the same for all $i$ and $\sigma$-invariant. 
 Thus the assertion is proved.
\end{proof}

\begin{corollary}\label{cor:AutmV3M0}
We have $m_{\mathsf{V}^3}(\mathrm{Aut}_{\mathrm{DM}}(M_0, \langle, \rangle_0)) = \{ \pm 1 \}$.   
\end{corollary}

\begin{proof}
To be compatible with the principal polarisation on $M_0$, an automorphism $h \in \mathrm{Aut}_{\mathrm{DM}}(M_0)$ should satisfy $hh^* = \mathbb{I}_8$. That is, $h = a \cdot \mathbb{I}_8$ for $a \in \mathbb{Z}_{p}/p^2\mathbb{Z}_{p}$ such that $aa^* = a^2=1$, i.e. $a = \pm 1$. 
\end{proof}

We conclude with the proof of the main result, which stated that $\mathrm{Aut}(X,\lambda) = \{\pm 1\}$.

\begin{proof}[Proof of Theorem~\ref{thm:ocg=4}]
Let $V_p := 1 + \mathrm{Mat}_4(O_{p})\Pi^3 \subseteq \mathrm{GL}_4(O_{p})$.  
Since there is an equality of sets $\mathrm{Mat}_4(O_p)\Pi^s = \Pi^s\mathrm{Mat}_4(O_p)$ for any $s \geq 1$, we see that $V_p = V_{p,3}$, where the latter was defined in Lemma~\ref{lm:Vp}. In particular, it follows from Lemma~\ref{lm:Vp} (case $(i)$) that the reduction-modulo-$\mathsf{V}^3$ map 
\[
m_{\mathsf{V}^3}: \mathrm{Aut}(X,\lambda) \to m_{\mathsf{V}^3}(\mathrm{Aut}_{\mathrm{DM}}(M_0, \langle, \rangle_0))
\]
is injective, since $\ker(m_{\mathsf{V}^3}) \subseteq (V_p)_{\mathrm{tors}}$ and the latter is trivial.
The proof now follows from Corollary~\ref{cor:AutmV3M0}.
\end{proof}

\begin{remark}
    After having proven Proposition~\ref{prop:EndmpM1}, we have proven Theorem~\ref{thm:ocg=4} for $p\geq 3$. Indeed, we have 
 \[
m_{p}: \mathrm{Aut}(X,\lambda) \longrightarrow m_{p}(\mathrm{Aut}_{\mathrm{DM}}(M_0, \langle, \rangle_0))=m_{p}(\mathrm{Aut}_{\mathrm{DM}}(M_1, \langle, \rangle_1))=\{\pm 1\},
\]   
which is injective when $p\ge 3$ by Lemma~\ref{lm:Vp} (Case $(ii)$). The further steps in Proposition~\ref{prop:EndmV3M0} and Corollary~\ref{cor:AutmV3M0} are needed to complete the case $p=2$.   
\end{remark}

\providecommand{\bysame}{\leavevmode\hbox to3em{\hrulefill}\thinspace}

\end{document}